\newif\ifpictures
\numberwithin{equation}{section}
\newtheorem{thm}{Theorem}
\newtheorem{example}[thm]{Example}
\newtheorem{prop}[thm]{Proposition}
\newtheorem{lemma}[thm]{Lemma}
\newtheorem{cor}[thm]{Corollary}
\newtheorem{defn}[thm]{Definition}
\newtheorem{infthm}[thm]{Informal Statement}
\numberwithin{thm}{section}
\newcounter{FNC}[page]
\def\newfootnote#1{{\addtocounter{FNC}{2}$^\fnsymbol{FNC}$%
     \let\thefootnote\relax\footnotetext{$^\fnsymbol{FNC}$#1}}}
\newcommand{\C}{\mathbb{C}}
\newcommand{\N}{\mathbb{N}}
\newcommand{\Q}{\mathbb{Q}}
\newcommand{\R}{\mathbb{R}}
\newcommand{\F}{\mathbb{F}}
\newcommand{\Z}{\mathbb{Z}}
\newcommand{\lf}{\left}
\newcommand{\ri}{\right}
\newcommand{\ra}{\rightarrow}
\newcommand{\Ra}{\Rightarrow}
\newcommand{\Lera}{\Leftrightarrow}
\newcommand{\ovl}{\overline}
\newcommand{\wh}{\widehat}
\newcommand\cA{{\ensuremath{\mathcal{A}}}\xspace}
\newcommand\cT{{\ensuremath{\mathcal{T}}}\xspace}
\newcommand\cV{{\ensuremath{\mathcal{V}}}\xspace}
\newcommand{\vphi}{\varphi}
\newcommand{\alp}{\alpha}
\newcommand{\lam}{\lambda}
\newcommand{\Sig}{\Sigma}
\DeclareMathOperator{\sgn}{sgn}
\DeclareMathOperator{\conv}{conv}
\DeclareMathOperator{\supp}{supp}
\DeclareMathOperator{\trop}{trop}
\DeclareMathOperator{\Log}{Log}
\DeclareMathOperator{\Exp}{Exp}
\DeclareMathOperator{\New}{New}
\DeclareMathOperator{\RE}{Re}
\DeclareMathOperator{\IM}{Im}
\DeclareMathOperator{\Mat}{Mat}
\DeclareMathOperator{\Int}{int}
\DeclareMathOperator{\SOS}{SOS}
\DeclareMathOperator{\lcm}{lcm}
\DeclareMathOperator{\V}{vert}
\title[Amoebas, Nonnegative Polynomials and SOS Supported on Circuits]{Amoebas, Nonnegative Polynomials and Sums of Squares Supported on Circuits}
\begin{document}

\author{Sadik Iliman}
\address{Sadik Iliman, Goethe-Universit\"at, FB 12 -- Institut f\"ur Mathematik,
Postfach 11 19 32, D-60054 Frankfurt am Main, Germany}
\email{iliman@math.uni-frankfurt.de}

\author{Timo de Wolff}
\address{Timo de Wolff, Texas A\&M University, Department of Mathematics, College Station, TX 77843-3368, 
 USA\medskip}
 \email{dewolff@math.tamu.edu}

\subjclass[2010]{11E25, 12D10, 14M25, 14P10, 14T05, 26C10, 52B20}
\keywords{Amoeba, certificate, circuit, convexity, invariant, nonnegative polynomials, norm, sparsity, sums of squares}

\begin{abstract}
We completely characterize sections of the cones of nonnegative polynomials, convex polynomials and sums of squares with polynomials supported on circuits, a genuine class of sparse polynomials. In particular, nonnegativity is characterized by an invariant, which can be immediately derived from the initial polynomial. Furthermore, nonnegativity of such polynomials $f$ coincides with solidness of the amoeba of $f$, i.e., the Log-absolute-value image of the algebraic variety $\cV(f) \subset (\C^*)^n$ of $f$. 

These results generalize earlier works both in amoeba theory and real algebraic geometry by Fidalgo, Kovacec, Reznick, Theobald and de Wolff and solve an open problem by Reznick. They establish the first direct connection between amoeba theory and nonnegativity of real polynomials. Additionally, these statements yield a completely new class of nonnegativity certificates independent from sums of squares certificates. 
\end{abstract}

\maketitle

\section{Introduction}
Forcing additional structure on polynomials often simplifies certain problems in theory and practice. One of the most prominent examples is given by \emph{sparse} polynomials, which arise in different areas in mathematics. Exploiting sparsity in problems can reduce the complexity of solving hard problems. An important example is, given by sparse polynomial optimization problems, see \cite{Lasserre:sparse}. In this paper, we consider sparse polynomials having a special structure in terms of their Newton polytopes and supports. More precisely, we look at polynomials $f\in \R[\mathbf{x}] = \R[x_1,\dots,x_n]$, whose Newton polytopes are simplices and the supports are given by all the vertices of the simplices and one additional interior lattice point in the simplices. Such polynomials have exactly $n + 2$ monomials and can be regarded as \emph{supported on a circuit}. Note that $A \subset \mathbb N^n$ is called a \emph{circuit}, if $A$ is affinely dependent, but any proper subset of $A$ is affinely independent, see \cite{GKZ:discriminant}.  We write these polynomials as
\begin{eqnarray}
	f & = & \sum_{j=0}^n b_j \mathbf{x}^{\alpha(j)} + c \mathbf{x}^y \label{Equ:OurPolynomials}
\end{eqnarray}
where the Newton polytope $\Delta = \New(f) = \conv\{\alpha(0), \dots, \alpha(n)\}\subset \R^n$ is a lattice simplex, $y \in \Int(\Delta)$, $b_j \in \R_{>0}$ and $c \in \R^*$. We denote this class of polynomials as $P_{\Delta}^y$.
In this setting, the goal of this paper is to connect and establish new results in two different areas of mathematics. Namely, we link amoeba theory with nonnegative polynomials and sums of squares. The theory of amoebas deals with images of
varieties $\mathcal \cV(f) \subset (\C^*)^n$ under the Log-absolute-value map
\begin{eqnarray}
	\Log|\cdot|: \ \lf(\C^*\ri)^n \to \R^n, \quad (z_1,\ldots,z_n) \mapsto (\log|z_1|,\ldots, \log|z_n|), \label{Equ:LogMap}
\end{eqnarray}
having their nature in complex algebraic geometry with applications in various mathematical subjects including complex analysis \cite{Forsberg:Passare:Tsikh,GKZ:discriminant}, the topology of real algebraic curves \cite{Mikhalkin:Annals}, dynamical systems \cite{Einsiedler:et:al}, dimers / crystal shapes \cite{Kenyon:Okounkov:Sheffield}, and in particular with strong connections to tropical geometry, see \cite{Mikhalkin:Survey,Passare:Tsikh:Survey}. The cones of nonnegative polynomials and sums of squares arise as central objects in convex algebraic geometry and polynomial optimization, see \cite{Blekherman:Parrilo:Thomas, Lasserre:Buch}. 

For both amoebas and nonnegative polynomials / sums of squares, work has been done for special configurations in the above setting. In \cite{TTdW:genusone}, the authors give a characterization of the corresponding amoebas of such polynomials and in \cite{Fidalgo:Kovacec, Reznick:AGI}, the authors characterize questions of nonnegativity and sums of squares for very special coefficients and simplices in the above sparse setting. We aim to extend results in all of these papers and establish connections between them for polynomials $f\in P_{\Delta}^y$.\\

We call a lattice point $\alp \in \Z^n$ \textit{even} if every entry $\alp_j$ is even, i.e., $\alp \in (2\Z)^n$. We call an integral polytope \textit{even} if all its vertices are even. Finally, we call a polynomial a \textit{sum of monomial squares} if all monomials $b_\alp \mathbf{x}^{\alp}$ satisfy $b_{\alp} > 0$ and $\alp$ even.\\

For the remainder of this article we assume that \textbf{every} polytope is even unless it is explicitly stated otherwise. However, we will reemphasize this fact in key statements.

For $f \in P_{\Delta}^y$ we define the \textit{circuit number} $\Theta_f$ as
\begin{eqnarray}
	\Theta_f & = & \prod_{j = 0}^n \left(\frac{b_j}{\lam_j}\right)^{\lam_j}, \label{Equ:CircuitNumber}
\end{eqnarray}
where the $\lam_j$ are uniquely given by the convex combination $\sum_{j = 0}^n \lam_j \alp(j) = y,\lambda_j \geq 0, \sum_{j = 0}^n \lam_j = 1$. We show that every polynomial $f \in P_{\Delta}^y$ is, up to an isomorphism on $\R^n$, completely characterized by the $\lam_j$ and its circuit number $\Theta_f$. 

Remember that we always have $c \in \R^*$ by definition of $P_{\Delta}^y$. The case $c = 0$ implies that the polynomial $f$ is a sum of monomial squares and hence always is nonnegative. This should be kept in mind when with slight abuse of notation $c = 0$ is a possible choice in some statements. We now formulate our main theorems. The first theorem stated here is a composition of Theorem \ref{Thm:Positiv} and the Corollaries \ref{cor:zerobound}, \ref{Cor:ADiscriminant} and \ref{Cor:AmoebaSolidness} in the article.

\begin{thm}
Let $f \in P_{\Delta}^y$ and $\Delta$ be an even simplex, i.e., $\alp(j) \in (2\N)^n$ for all $0 \leq j \leq n$. Then the following statements are equivalent.
\begin{enumerate}
    \item $c \in [-\Theta_f,\Theta_f]$ for $y \notin (2\mathbb N)^n$ and $c \geq -\Theta_f$ for $y \in (2\N)^n$.
    \item $f$ is nonnegative.
\end{enumerate}

Furthermore, $f$ is located on the boundary of the cone of nonnegative polynomials if and only if $|c| = \Theta_f$ for $y \notin (2\N)^n$ and $c = -\Theta_f$ for $y \in (2\N)^n$. In these cases, $f$ has at most $2^n$ real zeros all of which only differ in their signs.

Assume that furthermore $n \geq 2$ and $f$ is not a sum of monomial squares with $c > 0$. Then the following are equivalent.
\begin{enumerate}
 \item $f$ is nonnegative, i.e., $c \in [-\Theta_f,\Theta_f]$ for $y \notin (2\mathbb N)^n$ and $c \in [-\Theta_f,0]$ for $y \in (2\N)^n$
 \item The amoeba $\cA(f)$ is solid.
\end{enumerate}
\label{Thm:MainEquivalences}
\end{thm}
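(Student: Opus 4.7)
The plan is to use the weighted arithmetic--geometric mean (AM--GM) inequality applied to the vertex monomials of $f$, together with the fact that $\alp(j)\in(2\N)^n$ forces each $\mathbf{x}^{\alp(j)}=|\mathbf{x}|^{\alp(j)}$ to be nonnegative on $\R^n$.

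For the sufficient direction of the first equivalence I would compute
$$\sum_{j=0}^n b_j\mathbf{x}^{\alp(j)}=\sum_{j=0}^n \lam_j\cdot\frac{b_j\mathbf{x}^{\alp(j)}}{\lam_j}\ge\prod_{j=0}^n\left(\frac{b_j}{\lam_j}\right)^{\lam_j}|\mathbf{x}|^{\sum_j\lam_j\alp(j)}=\Theta_f\,|\mathbf{x}|^{y}$$
via weighted AM--GM (with $\lam_j>0$ since $y\in\Int(\Delta)$) and $\sum\lam_j\alp(j)=y$. When $y\in(2\N)^n$ one has $\mathbf{x}^y=|\mathbf{x}|^y$, so $f(\mathbf{x})\ge(\Theta_f+c)|\mathbf{x}|^y\ge 0$ exactly when $c\ge-\Theta_f$. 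Otherwise $c\mathbf{x}^y\ge-|c|\,|\mathbf{x}|^y$, yielding $f(\mathbf{x})\ge(\Theta_f-|c|)|\mathbf{x}|^y\ge 0$ whenever $|c|\le\Theta_f$.

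For the converse I would construct an explicit negative witness. Since $\alp(1)-\alp(0),\ldots,\alp(n)-\alp(0)$ are linearly independent, the system $b_j\mathbf{x}^{\alp(j)}/\lam_j=t$ for $0\le j\le n$ admits a unique solution $|\mathbf{x}^*|\in\R_{>0}^n$ with $t=\Theta_f|\mathbf{x}^*|^y$ (read off by summing the log equations weighted by $\lam_j$). At $|\mathbf{x}^*|$ the AM--GM above is tight. If $y\notin(2\N)^n$ then some $y_i$ is odd, so the signs of the $x_i^*$ may be chosen so that $c(\mathbf{x}^*)^y=-|c|\,|\mathbf{x}^*|^y$, producing $f(\mathbf{x}^*)=(\Theta_f-|c|)|\mathbf{x}^*|^y<0$ whenever $|c|>\Theta_f$. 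If $y\in(2\N)^n$ then $(\mathbf{x}^*)^y=|\mathbf{x}^*|^y>0$ regardless of signs, so only $c<-\Theta_f$ yields a negative witness, matching the stated coefficient intervals.

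The boundary description would then fall out of the same analysis: any real zero of $f$ must achieve equality in the above AM--GM (else $f>0$), which pins down $|\mathbf{x}|$ uniquely, and must satisfy the sign constraint $c\mathbf{x}^y=-\Theta_f|\mathbf{x}|^y$. Since at most $2^n$ sign patterns share a fixed $|\mathbf{x}|\in\R_{>0}^n$, $f$ has at most $2^n$ real zeros, all differing only in their signs. For the amoeba equivalence I would invoke the classification of amoebas of circuit polynomials from~\cite{TTdW:genusone}, which shows $\cA(f)$ is solid precisely when $|c|\le\Theta_f$. Matching this against the nonnegativity intervals from the first part produces the claimed equivalence in both parity regimes; the hypothesis that $f$ is not a sum of monomial squares with $c>0$ excludes the lone configuration ($y$ even, $c>0$) in which $f$ is automatically nonnegative yet no upper bound on $c$ is forced by solidness. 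The main delicacy throughout will be the case split on the parity of $y$ together with the sign bookkeeping on $\mathbf{x}^*$, both for manufacturing the negativity witness and for enumerating the boundary zeros.
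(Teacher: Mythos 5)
Your treatment of the nonnegativity statements is correct and takes a genuinely more elementary route than the paper. The paper proves Theorem \ref{Thm:Positiv} by the norm-relaxation method: after reducing to the positive orthant it passes to $f(e^{\mathbf{w}})$, shows via Propositions \ref{Prop:ExtremalPoint} and \ref{Prop:GlobalMinimizer} that for $c=-\Theta_f$ the function $f(e^{\mathbf{w}})$ has a unique global minimizer $\mathbf{s}^*$ with value $0$, and then compares $f_c$ with $f_{-\Theta_f}$ term by term. Your weighted AM--GM inequality $\sum_j b_j\mathbf{x}^{\alp(j)}\ge\Theta_f|\mathbf{x}|^y$ delivers the sufficiency in one line, your explicit witness $|\mathbf{x}^*|$ (the equality case of AM--GM) replaces the minimizer $e^{\mathbf{s}^*}$, and the equality analysis gives the boundary description and the $2^n$ bound on zeros exactly as in Corollary \ref{cor:zerobound}. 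The sign bookkeeping (flipping an odd coordinate of $y$ to force $c\mathbf{x}^y=-|c|\,|\mathbf{x}|^y$) matches the paper's reduction. What your route buys is brevity; what the paper's route buys is the minimizer $\mathbf{s}^*$ as a reusable object (Gale duals, $A$-discriminants, the decomposition in Theorem \ref{thm:multiple}), which your argument also produces implicitly as the AM--GM equality point.

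The amoeba equivalence, however, has a genuine gap. You invoke ``the classification of amoebas of circuit polynomials from \cite{TTdW:genusone}, which shows $\cA(f)$ is solid precisely when $|c|\le\Theta_f$.'' No such classification exists in that reference. What is known from prior work (Theorem \ref{Thm:MainGenus1}) is: the complement has a bounded component if $|c|>\Theta_f$; the amoeba is solid if $|c|\le\Psi_f$ (lopsidedness); and the bound $\Theta_f$ is sharp only under an additional condition on the fiber function that need not hold. The regime $c\in(-\Theta_f,-\Psi_f)$ is precisely the open case, and closing it for real circuit polynomials with positive outer coefficients is the content of the paper's Theorem \ref{Thm:AmoebaSolidness}, whose proof is a substantial new argument: one shows via Lemmas \ref{Lem:DimensionTwoSuffices} and \ref{Lem:DimensionTwo} that the fiber function $f_c^{|\mathbf{1}|}$ attains every real value in $[\Psi_{\mathbf{w}}-c,\Theta_{\mathbf{w}}-c]$, using a homotopy of hypotrochoids in the fiber torus. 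Without this, your proof establishes only the easy direction (nonnegativity fails $\Rightarrow$ amoeba not solid, from $|c|>\Theta_f$); the converse direction is unproved. You would need either to reproduce the hypotrochoid argument or to cite Theorem \ref{Thm:AmoebaSolidness} itself rather than the earlier literature.
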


Note in this context that an amoeba $\cA(f)$ of $f \in P_\Delta^y$ is \textit{solid} if and only if its complement has no bounded components. Note furthermore that since $\Delta$ is an even simplex, $f$ is a sum of monomial squares (and hence trivially nonnegative) if and only if $c \geq 0$ and $y \in (2\N)^n$.

Theorem \ref{Thm:MainEquivalences} yields a very interesting relation between the structure of the amoebas of $f \in P_{\Delta}^y$ and nonnegative polynomials $f \in P_{\Delta}^y$, which are both completely characterized by the circuit number. Furthermore, it generalizes amoeba theoretic results from \cite{TTdW:genusone}.\\

A crucial observation for $f \in P_{\Delta}^y$ is that nonnegativity of such $f$ does not imply that $f$ is a sum of squares. It is particularly interesting that the question whether $f \in P_{\Delta}^y$ is a sum of squares or not depends on the lattice point configuration of the Newton polytope of $f$ alone. We give a precise characterization of the nonnegative $f \in P_{\Delta}^y$ which are additionally a sum of squares in Section \ref{Sec:PSDSOS}, Theorem \ref{thm:sos}. Here, we present a rough version of the statement.

\begin{infthm}
Let $f \in P_{\Delta}^y$ and $\Delta$ be an even simplex. Let $f$ be nonnegative. Then $f$ is a sum of squares if and only if $y$ is the midpoint of two even distinct lattice points contained in a particular subset of lattice points in $\Delta$. In particular, this is independent of the choice of the coefficients $b_j, c$.
\label{Thm:MainSOS}
\end{infthm}

Note that Theorems \ref{Thm:MainEquivalences} and \ref{Thm:MainSOS} generalize the main results in \cite{Fidalgo:Kovacec} and \cite{Reznick:AGI} and yield them as special instances. In Section \ref{Sec:PSDSOS} we will explain this relationship in more detail. 

Based on these characterizations we define a new convex cone $C_{n,2d}$:

\begin{defn}
 We define the set of \emph{sums of nonnegative circuit polynomials} (SONC) as
$$ C_{n,2d} \ = \ \left\{f \in \R[\mathbf{x}]_{2d} \ :\  f = \sum_{i=1}^k \lambda_i g_i, \lambda_i \geq 0, g_i \in P_{\Delta_i}^y\cap P_{n,2d}\right\}$$
for some even lattice simplices $\Delta_i \subset \R^n$.
\label{Def:SONC}
\end{defn}

It follows by construction that membership in the $C_{n,2d}$ cone serves as a nonnegativity certificate, see also Proposition \ref{Prop:ConeContainment}.

\begin{cor}
Let $f \in \R[\mathbf{x}]$. Then $f$ is nonnegative if there exist $\mu_i \geq 0$, $g_i \in C_{n,2d}$ for $1 \leq i\leq k$ such that
$$f \ = \ \sum_{i=1}^k \mu_i g_i.$$
\end{cor}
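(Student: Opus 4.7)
The corollary is a direct consequence of Definition \ref{Def:SONC} together with the observation that nonnegative polynomials form a convex cone, so the plan is simply to unwind definitions and invoke closure under nonnegative linear combinations.

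First I would fix an arbitrary $g_i \in C_{n,2d}$. By Definition \ref{Def:SONC}, $g_i$ admits a representation
\[
g_i \ = \ \sum_{j} \lambda_{ij} h_{ij}, \qquad \lambda_{ij} \geq 0,\ h_{ij} \in P_{\Delta_{ij}}^y \cap P_{n,2d}.
\]
Since each $h_{ij}$ lies in $P_{n,2d}$, i.e., is nonnegative on $\R^n$, and the $\lambda_{ij}$ are nonnegative, the polynomial $g_i$ is a nonnegative linear combination of nonnegative polynomials, hence $g_i(\mathbf{x}) \geq 0$ for every $\mathbf{x} \in \R^n$. This establishes the containment $C_{n,2d} \subseteq P_{n,2d}$, which is the only conceptual content of the statement; it has already been put in place by the nonnegativity characterizations culminating in Theorem \ref{Thm:MainEquivalences}, since the generators of $C_{n,2d}$ are built precisely from the nonnegative polynomials in $P_{\Delta}^y$.

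Given this, the assumed decomposition $f = \sum_{i=1}^k \mu_i g_i$ with $\mu_i \geq 0$ presents $f$ as a further nonnegative linear combination of nonnegative polynomials, and so $f(\mathbf{x}) \geq 0$ for all $\mathbf{x} \in \R^n$. There is no genuine obstacle here; the value of the corollary is to record the conical closure of $C_{n,2d}$ as a ready-made nonnegativity certificate, rather than to prove a new inequality. I would, however, explicitly state the intermediate inclusion $C_{n,2d} \subseteq P_{n,2d}$ in the proof, since this is the punchline whose verification uses all of the preceding work.
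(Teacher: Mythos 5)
Your proof is correct and is essentially the paper's argument: the paper treats this as immediate from Definition \ref{Def:SONC} (see also Proposition \ref{Prop:ConeContainment}(1), whose proof is the one line ``since all $\lambda_i g_i \in P_{n,2d}$, the inclusion is obvious''). One small remark: the inclusion $C_{n,2d} \subseteq P_{n,2d}$ needs nothing beyond the definition, since the generators are taken in $P_{\Delta_i}^y \cap P_{n,2d}$ by fiat; the characterization results such as Theorem \ref{Thm:MainEquivalences} are what make membership \emph{checkable}, not what make the generators nonnegative.
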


In Section \ref{Sec:SOPC} we discuss the SONC cone in further detail. In Proposition \ref{Prop:ConeContainment} we show that the SONC cone and the SOS cone are not contained in each other for general $n$ and $d$. Particularly, we also prove that the existence of a SONC decomposition is equivalent to nonnegativity of $f$ if $\New(f)$ is a simplex and there exists an orthant where all terms of $f$ except for  those corresponding to vertices have a negative sign (Corollary \ref{Cor:SONC}).\\

Finally, we prove the following result about convexity, see also Theorem \ref{thm:konvex}.

\begin{thm}
Let $n \geq 2$ and $f \in P_{\Delta}^y$ where $\Delta$ is an even simplex. Then $f$ is not convex.
\label{Thm:MainConvex}
\end{thm}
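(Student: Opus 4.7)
The plan is to prove that the Hessian $H(f)(\mathbf{x})$ fails to be positive semidefinite at some point of $\R^n$, which rules out convexity. Working on the positive orthant and writing $u_i := v_i/x_i$, direct differentiation of the monomials $\mathbf{x}^{\alp(j)}, \mathbf{x}^y$ yields the clean identity
\[
v^T H(f)(\mathbf{x})\, v \;=\; \sum_{j=0}^n b_j\, q_{\alp(j)}(u)\, \mathbf{x}^{\alp(j)} \,+\, c\, q_y(u)\, \mathbf{x}^y \;=:\; P_u(\mathbf{x}),
\]
with the quadratic form $q_\alp(u) := (\alp\cdot u)^2 - \sum_i \alp_i u_i^2$. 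As $u \in \R^n$ and $\mathbf{x} \in \R_{>0}^n$ vary independently, convexity of $f$ forces $P_u(\mathbf{x}) \geq 0$ throughout. Observe that $P_u$ is again a polynomial on the circuit $\{\alp(0), \dots, \alp(n), y\}$, but with coefficients now depending on the vector $u$ via $q_\alp(u)$. The strategy is to exhibit, in each configuration of $\Delta$, a specific $u$ and $\mathbf{x} > 0$ for which $P_u(\mathbf{x}) < 0$.

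Suppose first that some vertex $\alp(j^*)$ has at least two positive coordinates $\alp_i(j^*), \alp_k(j^*) \geq 2$. The $\{i, k\}$-principal submatrix of $\alp(j^*) \alp(j^*)^T - \operatorname{diag}(\alp(j^*))$ has determinant $\alp_i(j^*) \alp_k(j^*) (1 - \alp_i(j^*) - \alp_k(j^*)) < 0$, so a suitable $u^*$ supported on $\{i, k\}$ satisfies $q_{\alp(j^*)}(u^*) < 0$. Then $P_{u^*}$ carries a negative coefficient on the vertex $\alp(j^*)$ of $\New(P_{u^*}) \subseteq \Delta$; choosing $\ell \in \R^n$ that strictly separates $\alp(j^*)$ from the remaining support and letting $\mathbf{x} = \exp(t\ell)$ with $t \to \infty$ makes $\mathbf{x}^{\alp(j^*)}$ dominate, yielding $P_{u^*}(\mathbf{x}) < 0$.

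Otherwise every vertex of $\Delta$ has at most one nonzero coordinate; since $\Delta$ is an $n$-simplex, we may reindex so that $\alp(0) = 0$ and $\alp(i) = m_i e_i$ with even $m_i \geq 2$. Writing $y = \sum_{i=1}^n \lam_i m_i e_i$ we then have $\lam_j > 0$, $\lam_0 = 1 - \sum_{i \geq 1} \lam_i > 0$, $y_i = \lam_i m_i \geq 1$, and $\sum_i y_i/m_i < 1$. If $c < 0$, take $u = (1, \dots, 1)$: then $q_{\alp(i)}(u) = m_i(m_i - 1) > 0$ and $q_y(u) = |y|(|y| - 1) > 0$ (since $|y| \geq n \geq 2$). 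Evaluating at $x_i = \eps^{1/m_i}$ gives $\mathbf{x}^{\alp(i)} = \eps$ and $\mathbf{x}^y = \eps^{1 - \lam_0}$; the term $c |y|(|y| - 1) \eps^{1 - \lam_0}$ has negative coefficient and, since $1 - \lam_0 < 1$, it dominates the $O(\eps)$ vertex terms as $\eps \to 0^+$, so $P_u < 0$. If $c > 0$, pick $j \neq k$ and set $u := y_k e_j - y_j e_k$: then $q_y(u) = -y_j y_k (y_j + y_k) < 0$ while $q_{\alp(l)}(u) = 0$ for $l \notin \{j, k\}$. Restricting to $x_l = 1$ for $l \notin \{j, k\}$ and taking $x_j = \eps^{1/m_j}$, $x_k = \eps^{1/m_k}$, the two vertex contributions are of order $\eps$ and the interior contribution is $-C \eps^{\lam_j + \lam_k}$ with $\lam_j + \lam_k \leq 1 - \lam_0 < 1$; thus $P_u < 0$ for small $\eps$.

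The main obstacle is the coordinate-simplex case just analyzed: since every $q_{\alp(j)}$ is then pointwise nonnegative, there is no way to make a vertex coefficient of $P_u$ negative, and one must instead exploit the strict interiority $\lam_0 > 0$ to find a scaling of $\mathbf{x}$ in which the interior monomial $\mathbf{x}^y$ has strictly smaller exponential order than every vertex monomial, so that the sign of $c\, q_y(u)$ dictates the sign of $P_u$ in the limit.
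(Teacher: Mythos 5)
Your reduction $v^{t}H_f(\mathbf{x})\,v=P_u(\mathbf{x})$ with $u_i=v_i/x_i$ and $q_{\alp}(u)=(\alp\cdot u)^2-\sum_i\alp_iu_i^2$ is correct, and it is a genuinely different route from the paper, which instead shows that the $2\times 2$ principal minor $\tfrac{\partial^2f}{\partial x_1^2}\tfrac{\partial^2f}{\partial x_2^2}-\bigl(\tfrac{\partial^2f}{\partial x_1\partial x_2}\bigr)^2$ has $(2y_1-2,2y_2-2,2y_3,\dots,2y_n)$ as a vertex of its Newton polytope carrying the negative coefficient $-c^2y_1y_2(y_1+y_2-1)$. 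Your Case 1 and your treatment of the coordinate simplex $\conv\{0,m_1e_1,\dots,m_ne_n\}$ are both sound. The flaw is the opening reduction of Case 2: ``every vertex of $\Delta$ has at most one nonzero coordinate'' does \emph{not} imply that $0$ is a vertex. An affinely independent set of $n+1$ points on the coordinate axes may contain two vertices on the \emph{same} axis, e.g.\ $\Delta=\conv\{(2,0),(8,0),(0,4)\}$ with interior lattice point $y=(2,2)$. This configuration falls into neither of your cases (no vertex has two coordinates $\geq 2$, yet $0\notin\V(\Delta)$), and your scalings — which rely essentially on $q_{\alp(0)}=q_0\equiv 0$ so that the interior monomial decays more slowly than every surviving vertex monomial — are unavailable there.

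This is not a repairable omission within your framework, because on that configuration the statement itself fails as literally written: $g=x_1^2+x_1^8+x_2^4+c\,x_1^2x_2^2\in P_{\Delta}^{(2,2)}$ is \emph{convex} for small $c>0$. Indeed $\tfrac{\partial^2g}{\partial x_1^2}=2+56x_1^6+2cx_2^2>0$ and
\begin{equation*}
\det H_g \ = \ 24x_2^2+4cx_1^2+672x_1^6x_2^2+112cx_1^8+24cx_2^4-12c^2x_1^2x_2^2 \ \geq \ 0,
\end{equation*}
since $12c^2x_1^2\leq 24+672x_1^6$ for all $x_1$ once $c$ is small. The theorem is therefore only true under the paper's standing normalization $\alp(0)=0$, i.e.\ $f$ has a constant term (the paper's own proof also silently writes $f=1+\cdots$; note that the usual reduction ``factor out $\mathbf{x}^{\alp(0)}$'' preserves nonnegativity but not convexity, so it is a genuine hypothesis here, not a normalization). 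Once you assume $\alp(0)=0$ explicitly, your Case 2 dichotomy becomes valid — the remaining $n$ vertices must lie on $n$ distinct axes, since two of them on one axis would be collinear with $0$ — and with that single amendment your proof is complete and correct.
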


Recently, there is much interest in understanding the cone of convex polynomials, Theorem \ref{Thm:MainConvex} serves as an indication that sparsity is a structure that can prevent polynomials from being convex.\\

\textbf{Further contributions.} 
$\mathbf{1}$. Gale duality is a standard concept for (convex) polytopes, matroids and sparse polynomial systems, see \cite{Bjoerner:et:al:Matroids,GKZ:discriminant,Gruenbaum,Sottile:Book:RealSolutions}. We show that a polynomial $f \in P_{\Delta}^y$ has a global norm minimizer $e^{\mathbf{s}^*} \in\R^n$, see Section \ref{SubSec:NormMinimizer}. $f$ at $e^{\mathbf{s}^*}$ together with the circuit number $\Theta_f$ equals the Gale dual vector of the support matrix up to a scalar multiple (Corollary \ref{Cor:GaleDual}). Furthermore, it is an immediate consequence of our results that the circuit number is strongly related to the $A$-discriminant of $f$. Particularly, $f \in P_{n,2d} \cap P_{\Delta}^y$ is contained in the topological boundary of the nonnegativity cone, i.e., $f \in \partial (P_{n,2d} \cap P_{\Delta}^y)$, if and only if the $A$-discriminant vanishes at $f$ (Corollary \ref{Cor:ADiscriminant}). These facts about the $A$-discriminant were first shown in \cite{Nie:Discriminants} and \cite{TTdW:genusone}.

$\mathbf{2}$. We consider the case of multiple interior lattice points in the support of $f$. We prove for the case that all coefficients of the interior monomials are negative that all such nonnegative polynomials are in $C_{n,2d}$. Furthermore, we show when such polynomials are sums of squares, again generalizing results in \cite{Fidalgo:Kovacec}.

$\mathbf{3}$. Since the condition of being a sum of squares depends on the combinatorial structure of the simplex $\Delta$, using techniques from toric geometry, we provide sufficient conditions for simplices $\Delta$ such that every nonnegative polynomial in $P_{\Delta}^y$ is a sum of squares, independent from the position of $y \in \Int(\Delta)$. This will prove that for $n = 2$ almost every nonnegative polynomial in $P_{\Delta}^y$ is a sum of squares and this also yields large sections on which nonnegative polynomials and sums of squares coincide.

$\mathbf{4}$. We answer a question of Reznick stated in \cite{Reznick:AGI} whether a certain lattice point criterion on a class of  sparse support sets (more general than circuits) of nonnegative polynomials is equivalent to these polynomials being sums of squares.\\

This article is organized as follows. In Section \ref{Sec:Preliminaries}, we introduce some notations and recall some results that are essential for the upcoming sections and proofs of the main theorems. In Section \ref{Sec:Nonnegative}, we characterize nonnegativity of polynomials $f \in P_{\Delta}^y$. This is done via a norm relaxation method, which is outlined in the beginning of the section. Furthermore, Section \ref{Sec:Nonnegative} deals with invariants and properties of such polynomials and sets them in relation to Gale duals and $A$-discriminants. In Section \ref{Sec:amoebas}, we discuss amoebas of polynomials $f \in P_{\Delta}^y$ and how they are related to nonnegativity respectively the circuit number. In Section \ref{Sec:PSDSOS} we completely characterize the section of the cone of sums of squares with $P_{\Delta}^y$. Furthermore, we generalize results regarding nonnegativity and sums of squares to non-sparse polynomials with simplex Newton polytope. In Section \ref{Sec:convex}, we completely characterize convex polynomials in $P_{\Delta}^y$. In Section \ref{Sec:SOPC}, we provide and discuss a new class of nonnegativity certificate given by \emph{sums of nonnegative circuit polynomials (SONC)}. In Section \ref{Sec:conjecture}, we prove that for non-simplex Newton polytopes $Q$ the lattice point criterion from the simplex case does not suffice to characterize sums of squares. We show that a necessary and sufficient criterion can be given by additionally taking into account the set of possible triangulations of $Q$. This solves an open problem stated by Reznick in \cite{Reznick:AGI}. Finally, in Section \ref{Sec:outlook}, we provide an outlook for future research possibilities.

\section*{Acknowledgments}
We would like to thank Christian Haase for his support and explanations concerning toric ideals and normality. Furthermore, we thank Jens Forsg{\aa}rd, Hannah Markwig, Frank Sottile and Thorsten Theobald for their helpful comments and suggestions on the manuscript. Moreover, we thank the anonymous referees for their helpful comments.

The second author was partially supported by GIF Grant no. 1174/2011 and DFG grant MA 4797/3-2.

\section{Preliminaries}
\label{Sec:Preliminaries}

\subsection{Nonnegative Polynomials and Sums of Squares}
Let $\R[\mathbf{x}]_d = \mathbb R[x_1,\dots,x_n]_{d}$ be the vector space of polynomials in $n$ variables of degree $d$. Denote the convex cone of nonnegative polynomials as
\begin{eqnarray*}
 P_{n,2d} &=& \{p \in \mathbb R[\mathbf{x}]_{2d} : p(\mathbf{x}) \geq 0 \,\,\textrm{ for all }\,\, \mathbf{x}\in \R^n\},
\end{eqnarray*}
and the convex cone of sums of squares as
\begin{eqnarray*}
\Sigma_{n,2d} &=& \left\{p \in P_{n,2d}: p = \sum_{i=1}^k q_i^2 \,\,\textrm{ for }\,\, q_i\in \R[\mathbf{x}]_{d}\right\}.
\end{eqnarray*}

For an introduction of nonnegative polynomials and sums of squares, see \cite{Blekherman:Parrilo:Thomas, Lasserre:Buch, Laurent:Survey}. Since we are interested in nonnegative polynomials and sums of squares in the class $P_{\Delta}^y$, we consider the sections
$$P_{n,2d}^y \ = \ P_{n,2d} \cap P_{\Delta}^y \quad \textrm{ and }\quad \Sigma_{n,2d}^y \ = \ \Sigma_{n,2d} \cap P_{\Delta}^y.$$

\subsection{Amoebas}
\label{SubSec:PreliminariesAmoebas}
For a given Laurent polynomial $f \in \C[z_1,\ldots,z_n]$ on a support set $A \subset \Z^n$ with
variety $\cV(f) \subset (\C^*)^n$, the amoeba $\cA(f)$ is defined as the image of $\cV(f)$ under the log-absolute map $\Log|\cdot|$ defined in \eqref{Equ:LogMap}. Amoebas were first introduced by Gelfand, Kapranov and Zelevinsky in \cite{GKZ:discriminant}. For an overview see \cite{deWolff:Diss,Mikhalkin:Survey,Passare:Tsikh:Survey,Rullgard:Diss}.

\begin{figure}
\ifpictures
$$\includegraphics[width=0.3\linewidth]{./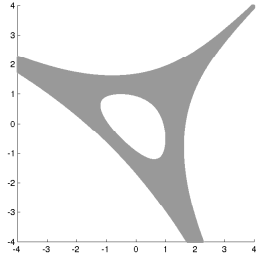}$$
\fi
\caption{The amoeba of the polynomial $f = x_1^2x_2 + x_1x_2^2 - 4x_1x_2 + 1 \in P_\Delta^{(1,1)}$ with $\Delta = \{(0,0),(2,1),(1,2)\}$.}
\label{Fig:Amoeba}
\end{figure}

Amoebas are closed sets \cite{Forsberg:Passare:Tsikh}. Their complements consists of finitely many convex components \cite{GKZ:discriminant}.
Each component of the complement of $\cA(f)$ corresponds to a unique lattice point in $\conv(A) \cap \Z^n$ via an \textit{order map}
\cite{Forsberg:Passare:Tsikh}.

Components of the complement which correspond to vertices of $\conv(A)$ via the order map do always exist. For all other components of the complement of an amoeba $\cA(f)$ the existence depends non trivially on the choice of the coefficients of $f$, see \cite{GKZ:discriminant,Mikhalkin:Survey,Passare:Tsikh:Survey}. We denote the component of the complement of $\cA(f)$ of all points with order $\alp \in \conv(A) \cap \Z^n$ as $E_{\alp}(f)$.\\

\noindent The fiber $\F_{\mathbf{w}}$ of each point $\mathbf{w} \in \R^n$ with respect to the $\Log|\cdot|$-map is given by 
\begin{eqnarray*}
	\F_{\mathbf{w}} & = & \{\mathbf{z} \in (\C^*)^n \ : \ \Log|\mathbf{z}| = \mathbf{w}\}.
\end{eqnarray*}
It is easy to see that $\F_{\mathbf{w}}$ is homeomorphic to a real $n$-torus $(S^1)^n$. For $f = \sum_{\alp \in A} b_\alp \mathbf{z}^{\alp}$ and $\mathbf{v} \in (\C^*)^n$ we define
the \textit{fiber function}
\begin{eqnarray*}
	  f^{|\mathbf{v}|}: (S^1)^n \to \C, \quad  \phi \mapsto
f(e^{\Log|\mathbf{v}| + i \phi}) = \sum_{\alp \in A} b_\alp
\cdot |\mathbf{v}|^\alp \cdot e^{i \langle \alp, \phi \rangle}.
\end{eqnarray*}
This means that $f^{|\mathbf{v}|}$ is the pullback $\vphi_{|\mathbf{v}|}^*(f)$ of $f$ under the homeomorphism $\vphi_{|\mathbf{v}|}: (S^1)^n \to \F_{\Log|\mathbf{v}|} \subset (\C^*)^n$. The crucial fact about the fiber function is that for its zero set $\cV(f^{|\mathbf{v}|})$ it holds that
\begin{eqnarray}
	\cV(f^{|\mathbf{v}|}) & \cong & \cV(f) \cap \F_{\Log|\mathbf{v}|}, \label{Equ:FiberFunction1}
\end{eqnarray}
and hence we have for the amoeba $\cA(f)$ that
\begin{eqnarray}
	\Log|\mathbf{v}| \in \cA(f) & \Lera & \cV(f^{|\mathbf{v}|}) \neq \emptyset. \label{Equ:FiberFunction2}
\end{eqnarray}
For more details on the fiber function see \cite{deWolff:Diss,Mikhalkin:Survey,Schroeter:deWolff:Boundary,TTdW:genusone}.

\subsection{Agiforms}
\label{SubSec:Agiforms}
Asking for nonnegativity of polynomials supported on a circuit is closely related objects called an \emph{agiform} in \cite{Reznick:AGI}. Given a even lattice simplex $\Delta \subset \mathbb R^n$ and an interior lattice point $y \in \Int(\Delta)$, the corresponding agiform to $\Delta$ and $y$ is given by 
$$f(\Delta,\lambda,y) \ = \ \sum_{i=0}^{n} \lambda_i \mathbf{x}^{\alpha(i)} - \mathbf{x}^y$$
where $y = \sum_{i=0}^{n} \lambda_i\alpha(i) \in \N^n$ with $\sum_{i=0}^{n} \lambda_i = 1$ and $\lambda_i\geq 0$. The term \emph{agiform} is implied by the fact that the polynomial $f(\Delta,\lambda,y) = \sum_{i=0}^{n} \lambda_i \mathbf{x}^{\alpha(i)} - \mathbf{x}^y$ is nonnegative by the \emph{arithmetic-geometric mean inequality}. Note that an agiform has a zero at the all ones vector $\mathbf{1}$. This implies that agiforms lie on the boundary of the cone of nonnegative polynomials. A natural question is to characterize those agiforms that can be written as sums of squares. In \cite{Reznick:AGI}, it is shown that this depends non-trivially and exclusively on the combinatorial structure of the simplex $\Delta$ and the location of $y$ in the interior. We need some definitions and results adapted from \cite{Reznick:AGI}.

\begin{defn} Let $\hat\Delta = \{0, \alpha(1), \dots, \alpha(n)\}\subset (2\mathbb N)^n$ be such that  $\conv(\hat \Delta)$ is a simplex and let $L \subseteq \conv(\hat \Delta) \cap \Z^n$.

\begin{enumerate}
 \item  Define $A(L) = \{\frac{1}{2}(s + t) \in \Z^n : s, t\in L \cap (2\Z)^n\}$ and 
$\overline{A}(L) = \{\frac{1}{2}(s + t) \in \Z^n : s\neq t, s, t\in L \cap (2\Z)^n\}$ as the set of averages of even respectively distinct even points in $\conv(L) \cap \Z^n$.
\item   We say that $L$ is $\hat\Delta$-\textit{mediated}, if 
$$\hat\Delta \ \subseteq \ L \ \subseteq \ \overline{A}(L)\cup \hat\Delta,$$ 
i.e., every $\beta \in L\setminus\hat\Delta$ is an average of two distinct even points in $L$.
\end{enumerate}
\end{defn}

\begin{thm}[Reznick \cite{Reznick:AGI}]
There exists a $\hat\Delta$-mediated set $\Delta^*$ satisfying $A(\hat\Delta)\subseteq \Delta^*\subseteq (\Delta\cap \Z^n)$, which contains every $\hat\Delta$-mediated set.
\end{thm}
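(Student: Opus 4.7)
The plan is to construct $\Delta^*$ as the union of all $\hat\Delta$-mediated subsets of $\Delta \cap \Z^n$, and then to verify that this union inherits the mediated property. Maximality is then automatic from the construction.

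The first step is to certify that the family of $\hat\Delta$-mediated subsets of $\Delta\cap\Z^n$ is nonempty by showing that $A(\hat\Delta)$ itself belongs to it; this will also yield the lower inclusion $A(\hat\Delta) \subseteq \Delta^*$. Since $\hat\Delta \subseteq (2\mathbb{N})^n$, every vertex of $\hat\Delta$ is even, so the representation $\alpha(i) = \frac{1}{2}(\alpha(i) + \alpha(i))$ gives $\hat\Delta \subseteq A(\hat\Delta)$. For $\beta \in A(\hat\Delta)\setminus \hat\Delta$, any representation $\beta = \frac{1}{2}(s+t)$ with $s,t \in \hat\Delta$ must have $s \neq t$ (otherwise $\beta = s \in \hat\Delta$), and both $s$ and $t$ are even lattice points of $A(\hat\Delta) \supseteq \hat\Delta$. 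Hence $\beta \in \overline{A}(A(\hat\Delta))$, confirming the inclusion $A(\hat\Delta) \subseteq \overline{A}(A(\hat\Delta)) \cup \hat\Delta$ that defines the mediated property.

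The second step is to show that an arbitrary union $\bigcup_\iota L_\iota$ of $\hat\Delta$-mediated sets is again $\hat\Delta$-mediated. The inclusion $\hat\Delta \subseteq \bigcup_\iota L_\iota$ is immediate from $\hat\Delta \subseteq L_\iota$ for every $\iota$. For $\beta \in \bigcup_\iota L_\iota$ with $\beta \notin \hat\Delta$, pick any $\iota_0$ with $\beta \in L_{\iota_0}$; by hypothesis $\beta$ is the midpoint of two distinct even points of $L_{\iota_0}$, which are also even points of the larger set $\bigcup_\iota L_\iota$, so $\beta \in \overline{A}(\bigcup_\iota L_\iota)$. This argument relies only on the obvious monotonicity of the operator $\overline{A}$ with respect to set inclusion.

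Setting $\Delta^* = \bigcup\{L \subseteq \Delta \cap \Z^n : L \text{ is } \hat\Delta\text{-mediated}\}$, all three requirements of the theorem fall out at once: the family is nonempty and contains $A(\hat\Delta)$ by the first step, giving $A(\hat\Delta) \subseteq \Delta^*$; the union $\Delta^*$ is itself $\hat\Delta$-mediated by the second step; the upper inclusion $\Delta^* \subseteq \Delta\cap \Z^n$ holds termwise; and maximality is built into the definition, since any $\hat\Delta$-mediated subset of $\Delta \cap \Z^n$ appears as one of the $L_\iota$. I do not anticipate a serious obstacle here — the entire argument is a standard closure-under-union argument for sets defined by a monotone combinatorial closure operator, and the only nontrivial verification is that $A(\hat\Delta)$ is itself mediated, which is handled cleanly by the evenness of the vertices of $\hat\Delta$.
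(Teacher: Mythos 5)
Your argument is correct and complete: $A(\hat\Delta)$ is $\hat\Delta$-mediated because every vertex is even and every non-vertex average forces two distinct even summands, unions of mediated sets are mediated by monotonicity of $\overline{A}$, and the union of all mediated subsets of $\Delta\cap\Z^n$ therefore has all the required properties. Note that the paper itself gives no proof of this statement --- it is quoted from Reznick --- and your closure-under-union construction is essentially the standard argument from that source, so there is nothing substantive to compare beyond observing that Reznick's existence proof proceeds the same way (one can alternatively reach $\Delta^*$ by iteratively deleting from $\Delta\cap\Z^n$ the points that fail the mediation condition, but that is only a cosmetic variant computing the same maximal set from above rather than from below).
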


If $A(\hat\Delta) = \Delta^*$, then we say, motivated by the following example by Reznick, that $\Delta$ is an \emph{$M$-simplex}. Similarly, if $\Delta^* = (\Delta\cap \Z^n)$, then we call $\Delta$ an \emph{$H$-simplex}.

\begin{example}
The standard (Hurwitz-)simplex given by $\conv\{0,2d \cdot e_1,\ldots,2d \cdot e_n\} \subset \R^n$ for $d \in \N$ is an $H$-simplex. The Newton polytope $\conv\{0,(2,4),(4,2)\} \subset \R^2$ of the Motzkin polynomial $f = 1 + x^4y^2 + x^2y^4 - 3x^2y^2$ is an $M$-simplex, see Figure \ref{Fig:Simplices}.
\end{example}

\begin{figure}
\ifpictures
$$\includegraphics[width=0.35\linewidth]{./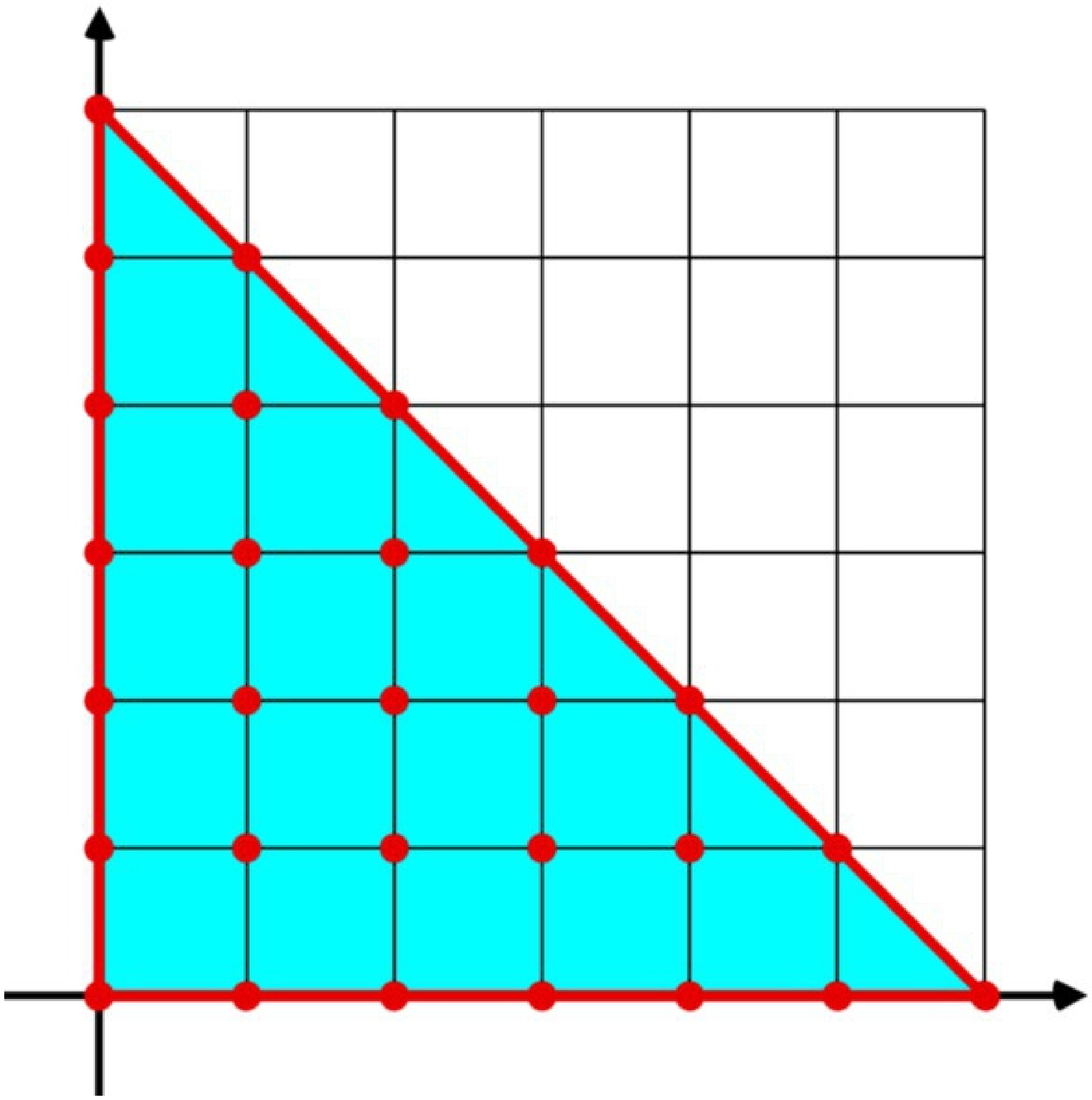} \qquad
\includegraphics[width=0.35\linewidth]{./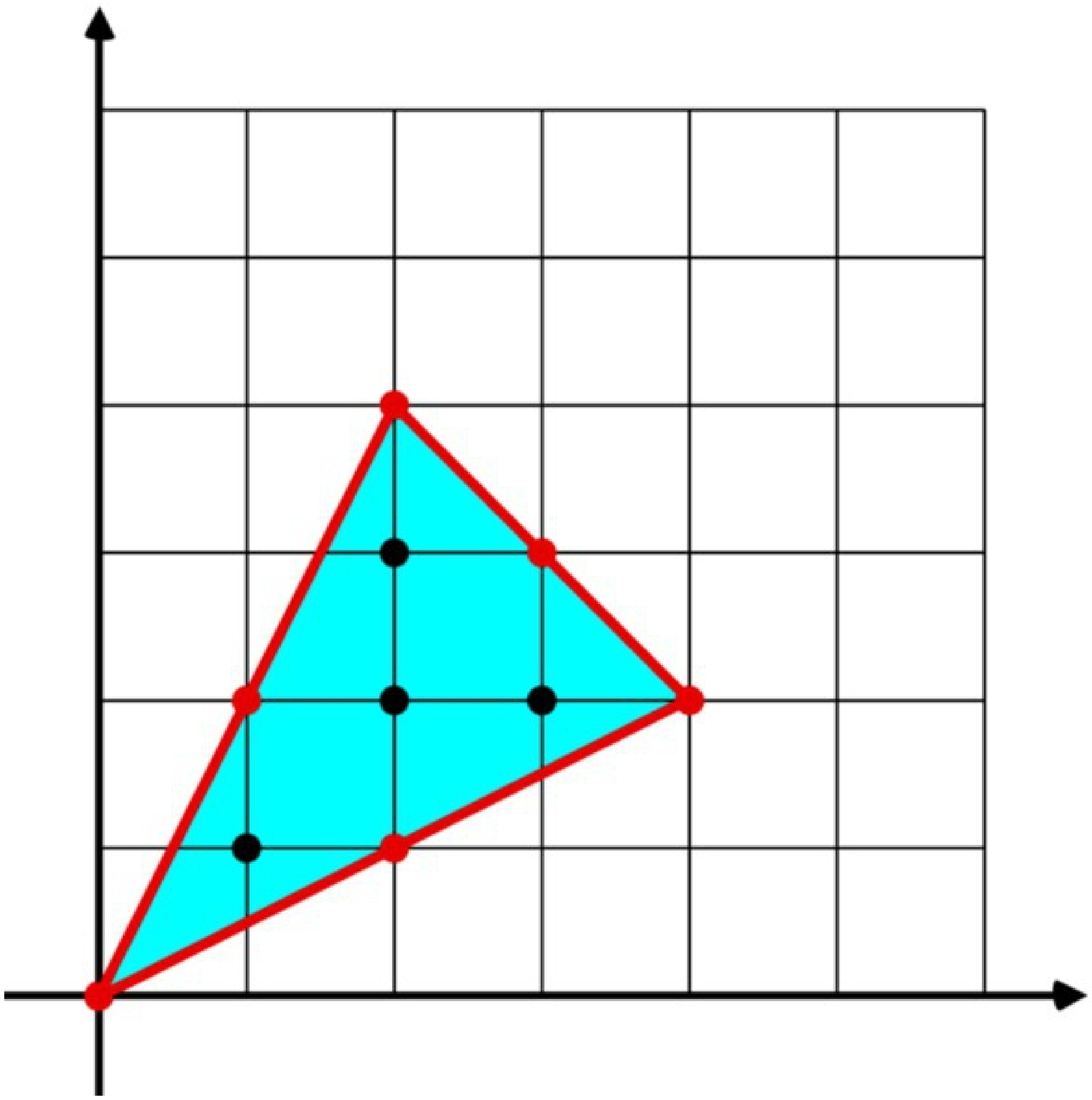}$$
\fi
\caption{On the left: The $H$-simplex $\conv\{(0,0),(6,0),(0,6)\} \subset \R^2$. On the right: The $M$-simplex $\conv\{0,(2,4),(4,2)\} \subset \R^2$. The red (light) points are the lattice points contained in the corresponding sets $\Delta^*$.}
\label{Fig:Simplices}
\end{figure}

The main result in \cite{Reznick:AGI} concerning the question under which conditions agiforms are sums of squares is given by the following theorem.
\begin{thm}[Reznick \cite{Reznick:AGI}]
\label{thm:reznick}
\label{thm:rezmain}
Let $f(\Delta,\lambda,y)$ be an agiform. Then $f(\Delta,\lambda,y) \in \Sigma_{n,2d}$ if and only if $y \in \Delta^*$.
\end{thm}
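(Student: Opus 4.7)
My plan is to prove the two implications separately. The sufficient direction (\emph{$y \in \Delta^*$ implies $f \in \Sigma_{n,2d}$}) proceeds by iterated two-term AM--GM identities using the $\hat\Delta$-mediated structure of $\Delta^*$, while the necessary direction (\emph{$f \in \Sigma_{n,2d}$ implies $y \in \Delta^*$}) is proved by inspecting the support of an arbitrary sum-of-squares decomposition, extracting a $\hat\Delta$-mediated set from it, and invoking the maximality characterization of $\Delta^*$ stated above.

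For the sufficient direction, the basic building block is the polynomial identity
$$\mathbf{x}^{(s+t)/2} \ = \ \tfrac{1}{2}\mathbf{x}^s + \tfrac{1}{2}\mathbf{x}^t - \tfrac{1}{2}\bigl(\mathbf{x}^{s/2} - \mathbf{x}^{t/2}\bigr)^2, \qquad s, t \in (2\Z)^n.$$
Using that $\Delta^*$ is $\hat\Delta$-mediated, I would show that for every $\gamma \in \Delta^*$ one can produce a representation
$$\mathbf{x}^{\gamma} \ = \ \sum_{i=0}^n \mu_i^{\gamma}\mathbf{x}^{\alpha(i)} \,-\, \sigma_{\gamma},$$
where $\sigma_{\gamma}$ is a sum of squares, $\mu_i^{\gamma} \geq 0$, $\sum_i\mu_i^{\gamma} = 1$, and $\sum_i\mu_i^{\gamma}\alpha(i) = \gamma$. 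The construction picks a mediating pair $\gamma = (s+t)/2$ with distinct even $s, t \in \Delta^*$, applies the identity above, and then treats $\mathbf{x}^s$ and $\mathbf{x}^t$ recursively. Specializing to $\gamma = y$ and invoking the uniqueness of the convex combination $y = \sum_i \lambda_i\alpha(i)$ in the simplex $\Delta$ forces $\mu_i^y = \lambda_i$, so $f = \sigma_y$ is SOS.

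For the necessary direction, suppose $f = \sum_{k=1}^K g_k^2$ with $g_k = \sum_{\beta \in S_k}c_{k,\beta}\mathbf{x}^\beta$. From $\New(g_k^2) = 2\New(g_k) \subseteq \New(f) = \Delta$ one has $S_k \subseteq \tfrac{1}{2}\Delta \cap \Z^n$. Define
$$L \ = \ \hat\Delta \,\cup\, 2\bigcup_k S_k \,\cup\, \{y\} \ \subseteq\ \Delta \cap \Z^n.$$
I would then prove that $L$ is $\hat\Delta$-mediated; by maximality of $\Delta^*$ this forces $y \in L \subseteq \Delta^*$. The inclusion $\hat\Delta \subseteq L$ is immediate from $\lambda_i > 0$ and the fact that each vertex $\alpha(i)$ of $\Delta$ must appear as a vertex of some $\New(g_k^2)$, forcing $\alpha(i)/2 \in S_k$. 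To verify mediation, fix $\gamma \in L \setminus \hat\Delta$ and compare two expressions for the coefficient of $\mathbf{x}^\gamma$ in $f$: it equals $0$ if $\gamma \neq y$ and $-1$ if $\gamma = y$, and it also equals $\sum_k c_{k,\gamma/2}^2 + \sum_k\sum_{\beta_1+\beta_2=\gamma,\,\beta_1\neq\beta_2} c_{k,\beta_1}c_{k,\beta_2}$, with the first sum present only when $\gamma/2 \in S_k$. Since the square part is nonnegative, the cross-term sum must itself be nonzero in all cases, producing $\beta_1\neq\beta_2 \in S_k$ with $\beta_1 + \beta_2 = \gamma$; then $2\beta_1, 2\beta_2 \in L$ are distinct even elements with midpoint $\gamma$, so $\gamma \in \overline{A}(L)$.

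The main obstacle I expect lies in the sufficient direction: the recursive substitution over the mediated structure of $\Delta^*$ need not be acyclic---the mediating pair for one element may reintroduce that same element downstream, as already occurs in dimension $n = 1$ on a Hurwitz simplex, where elements like $2$ and $4$ mediate each other. This is resolved by reformulating the recursion as a linear system over the formal monomials $\{\mathbf{x}^{\gamma}\}_{\gamma \in \Delta^*}$ and showing that the associated coefficient matrix is invertible; matching the resulting convex coefficients with the agiform's $\lambda_i$ via uniqueness of simplex barycentric coordinates then completes the argument. The necessary direction, by contrast, is essentially a careful bookkeeping exercise in coefficient cancellation, the only delicate point being the case distinction between even and odd $\gamma$ (the $c_{k,\gamma/2}^2$ term being absent in the odd case).
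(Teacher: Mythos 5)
First, note that the paper does not prove this statement at all: it is imported verbatim from Reznick with a citation, so the only in-paper material to compare against is the proof of Theorem \ref{thm:sos}, which reproduces the \emph{necessity} half of Reznick's argument for general circuit polynomials. Your necessity direction is correct and is essentially that same argument: building $L = \hat\Delta \cup 2\bigcup_k S_k \cup \{y\}$ from the supports of the squares, showing via the coefficient identities that every $\gamma \in L\setminus\hat\Delta$ admits a negative cross-term $c_{k,\beta_1}c_{k,\beta_2}<0$ with $\beta_1+\beta_2=\gamma$, and concluding $y\in L\subseteq\Delta^*$ by maximality of $\Delta^*$. That half is complete.

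The sufficiency direction, however, has a genuine gap at exactly the step where Reznick's Theorem 4.4 does its real work. You correctly observe that the recursive substitution over a chosen mediation $\gamma=\tfrac12(s_\gamma+t_\gamma)$ can cycle, but your proposed fix --- ``reformulate as a linear system and show the coefficient matrix is invertible'' --- is both unproven and insufficient. Invertibility of $I-M$ (where $M$ is the $\tfrac12$-weighted substitution matrix on $\Delta^*\setminus\hat\Delta$) does not by itself give what you need: the solution $a_\gamma$ of the system, equivalently the limit of the Neumann series $\sum_k M^k$, must be \emph{nonnegative}, since these $a_\gamma$ are the coefficients of the binomial squares $(\mathbf{x}^{s_\gamma/2}-\mathbf{x}^{t_\gamma/2})^2$ and also feed into the barycentric weights $\mu_i^y$ that you match against $\lambda_i$. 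The missing idea is that the associated random walk on $\Delta^*$ (step from $\gamma$ to $s_\gamma$ or $t_\gamma$ with probability $\tfrac12$, with $\hat\Delta$ absorbing) is absorbed almost surely, i.e.\ $M$ has spectral radius strictly less than $1$; this can be shown, e.g., by taking a generic linear functional $\ell$ and arguing that from the $\ell$-maximal element of any non-absorbing set one of the two mediating points has strictly larger $\ell$-value (since $s_\gamma\neq t_\gamma$) and must therefore lie in $\hat\Delta$, then inducting downward. Without some such argument the sufficiency half is an ansatz, not a proof, and the claim that $f=\sigma_y$ is a sum of squares does not follow.
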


\section{Invariants and Nonnegativity of Polynomials Supported on Circuits}
\label{Sec:Nonnegative}

The main contribution of this section is the characterization of $P_{n,2d}^y$, i.e., the set of nonnegative polynomials supported on a circuit (Theorem \ref{Thm:Positiv}). Along the way we provide standard forms and invariants, which reflect the nice structural properties of the class $P_{\Delta}^y$.

In Section \ref{SubSec:NormMinimizerStrategy} we outline the norm relaxation method, which is the proof method used for the characterization of nonnegativity. In Section \ref{SubSec:NormMinimizer}, we introduce standard forms for polynomials in $P_{\Delta}^y$ and, in particular, prove the existence of a particular norm minimizer for polynomials, where the coefficient $c$ equals the negative circuit number $\Theta_f$ (Proposition \ref{Prop:GlobalMinimizer}).
In Section \ref{SubSec:Nonnegative}, we put all pieces together and characterize nonnegativity of polynomials in $P_{\Delta}^y$ (Theorem \ref{Thm:Positiv}). In Section \ref{SubSec:GaleDual}, we discuss connections to Gale duals and $A$-discriminants.

\subsection{Nonnegativity via Norm Relaxation}
\label{SubSec:NormMinimizerStrategy}

We start with a short outline of the proof method, which we introduce and apply here in order to tackle the problem of nonnegativity of polynomials. Let $f = \sum_{\alp \in A} b_\alp \mathbf{x}^{\alp} \in \R[\mathbf{x}]$ be a polynomial with $A \subset \N^n$ finite, $0 \in A$ and $\alp \in (2\N)^n$ as well as $b_\alp > 0$ if $\alp$ is contained in the vertex set $\V(A)$ of $\conv(A)$. Instead of trying to answer the question whether $f(\mathbf{x}) \geq 0$ for all $\mathbf{x} \in \R^n$, we investigate the relaxed problem
\begin{eqnarray}
	\text{Is } f(|\mathbf{x}|) & = & \sum_{\alp \in \V(A)} b_\alp \cdot |\mathbf{x}^{\alp}| - \sum_{\alp \in A \setminus \V(A)} |b_\alp| \cdot |\mathbf{x}^{\alp}| \geq 0 \ \text{ for all } \ \mathbf{x} \in \R^n_{\geq 0} \text{ ?} \label{Equ:NormRelaxation}
\end{eqnarray}
Since $b_\alp \cdot |\mathbf{x}^{\alp}| = b_\alp \cdot \mathbf{x}^{\alp}$ for $\alp \in V(A)$ and $-b_\alp \cdot |\mathbf{x}^{\alp}| \leq b_\alp \cdot \mathbf{x}^{\alp}$ for $\alp \in A \setminus \V(A)$ we have $f(|x|) \leq f(x)$.

Since the strict positive orthant $\R^n_{> 0}$ is an open dense set in $\R^n_{\geq 0}$ and the componentwise exponential function $\Exp: \R^n \to \R_{> 0}^n, (x_1,\ldots,x_n) \mapsto (\exp(x_1),\ldots,\exp(x_n))$ is a bijection, Problem \eqref{Equ:NormRelaxation} is equivalent to the question
\begin{eqnarray}
	\text{Is } f(e^{\mathbf{w}}) & = & \sum_{\alp \in \V(A)} b_\alp \cdot e^{\langle \mathbf{w},\alp \rangle} - \sum_{\alp \in A \setminus \V(A)} |b_\alp| \cdot e^{\langle \mathbf{w},\alp \rangle} \geq 0 \ \text{ for all } \ \mathbf{w} \in \R^n \text{ ?} \label{Equ:ExpSumRelaxation}
\end{eqnarray}
Hence, an affirmative answer of \eqref{Equ:ExpSumRelaxation} implies nonnegativity of $f$. The motivation for the relaxation is that, on the one hand, Question \eqref{Equ:ExpSumRelaxation} is eventually easier to answer, since we have \textit{linear} operations on the exponents and, on the other hand, the gap between \eqref{Equ:ExpSumRelaxation} and nonnegativity hopefully is not too big, in particular for \textit{sparse} polynomials. We show that for polynomials supported on a circuit (and some more general classes of sparse polynomials) both is true: In fact, for circuit polynomials the question of nonnegativity and \eqref{Equ:ExpSumRelaxation} is \textit{equivalent} and can be characterized exactly, explicitly, and easily in terms of the coefficients of $f$ and the combinatorial structure of $A$.

An interesting side effect of the described relaxation is that \eqref{Equ:ExpSumRelaxation} is strongly related to the \textit{amoeba} of $f$ as we point out (for circuit polynomials) in the following Section \ref{Sec:amoebas}. Thus, it will serve us as a bridge between real algebraic geometry and amoeba theory.

\subsection{Standard Forms and Norm Minimizers of Polynomials Supported on Circuits}
\label{SubSec:NormMinimizer}

Let $f$ be a polynomial of the Form (\ref{Equ:OurPolynomials}) defined on a circuit $A = \{\alp(0),\ldots,\alp(n),y\}$ $\subset \Z^n$. Observe that there exists a unique convex combination $\sum_{j = 0}^n \lam_j \alp(j) = y$. In the following, we assume without loss of generality that $\alp(0) = 0$, which is possible, since we can factor out a monomial $\mathbf{x}^{\alp(0)}$ with $\alp(0) \in (2\N)^n$ if necessary. 
We define the \textit{support matrix} $M^A$ by
\begin{eqnarray*}
	M^A & = & \left(
\begin{array}{ccccc}
1 & 1 & \cdots & 1 & 1 \\
0 & \alp(1)_1 & \cdots & \alp(n)_1 & y_1 \\
\vdots & \vdots & \ddots & \vdots & \vdots \\
0 & \alp(1)_n & \cdots & \alp(n)_n & y_n \\
\end{array}
\right) \ \in \ \Mat(\Z,(n+1) \times (n+2)),
\end{eqnarray*}
and $M^A_j$ as the matrix obtained by deleting the $j$-th column of $M^A$, where we start to count at 0. Furthermore, we always assume that $b_0 = \lam_0$, which is always possible, since multiplication with a positive scalar does not affect if a polynomial is nonnegative. We denote the canonical basis of $\R^n$ with $e_1,\ldots,e_n$.

\begin{prop}
Let $f$ be of the Form \eqref{Equ:OurPolynomials} supported on a circuit $A = \{\alp(0),\ldots,\alp(n),y\}$ $\subset \Z^n$ and $y = \sum_{j = 0}^n \lam_j \alp(j)$ with $\sum_{j = 0}^n \lam_j = 1$, $0 < \lam_j < 1$ for all $j$. Let $\mu \in \N_{> 0}$ denote the least common multiple of the denominators of the $\lam_j$. Then there exists a unique polynomial $g$ of the Form \eqref{Equ:OurPolynomials} with $\supp(g) = A' = \{0,\alp(1)',\ldots,\alp(n)',y'\} \subset \Z^n$ such that the following properties hold.
\begin{enumerate}
	\item $M^A = \left(\begin{array}{cc} 1 & 0 \\ 0 & T \\ \end{array}\right) M^{A'}$ for some $T \in GL_{n}(\Q)$,
	\item $f$ and $g$ have the same coefficients,
	\item $\alp(j)' = \mu \cdot e_j$ for every $1 \leq j \leq n$,
	\item $y' = \sum_{j = 1}^n \lam_j \alp(j)'$,
	\item $f(e^{\mathbf{w}}) = g(e^{T^t\mathbf{w}})$ for all $\mathbf{w} \in \R^n$.
\end{enumerate}
\label{Prop:StandardForm}
\end{prop}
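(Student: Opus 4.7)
The plan is to construct $g$ explicitly by a linear change of coordinates in the exponent space that sends the simplex $\conv\{\alp(0),\alp(1),\ldots,\alp(n)\}$ to the scaled standard simplex $\conv\{0,\mu e_1,\ldots,\mu e_n\}$. Since $\alp(0) = 0$ and the $\alp(j)$ are vertices of a simplex in $\R^n$, the vectors $\alp(1),\ldots,\alp(n)$ are linearly independent. I would therefore define $T \in GL_n(\Q)$ to be the matrix whose $j$-th column is $\alp(j)/\mu$, and then set $\alp(j)' = \mu e_j$ for $1 \leq j \leq n$ and $y' = T^{-1}y$. By construction $T \alp(j)' = \alp(j)$, so $T$ has rational entries and is invertible, and prepending the $1 \times 1$ block of $1$'s gives the desired block identity in property~(1).

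Next I would verify the remaining properties by direct computation. For property~(4), using $\alp(0) = 0$ and $y = \sum_{j=1}^n \lam_j\alp(j)$,
\begin{equation*}
  Ty' \ = \ y \ = \ \sum_{j=1}^n \lam_j \alp(j) \ = \ \sum_{j=1}^n \lam_j T\alp(j)' \ = \ T\Bigl(\sum_{j=1}^n \lam_j \alp(j)'\Bigr),
\end{equation*}
so invertibility of $T$ yields $y' = \sum_{j=1}^n \lam_j \alp(j)' = \mu(\lam_1,\ldots,\lam_n)$. The latter lies in $\Z^n$ by the definition of $\mu$ as the least common multiple of the denominators of the $\lam_j$, so $g$ is an honest polynomial in $P_{\Delta'}^{y'}$. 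Defining $g$ with the same coefficients $b_0,\ldots,b_n,c$ as $f$ takes care of property~(2), and property~(3) is built into the definition.

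For property~(5), the key identity is $\langle T^t \mathbf{w},\alp'\rangle = \langle \mathbf{w}, T\alp'\rangle$, applied to each $\alp' \in \{0,\alp(1)',\ldots,\alp(n)',y'\}$:
\begin{equation*}
  g(e^{T^t\mathbf{w}}) \ = \ \sum_{j=0}^n b_j e^{\langle T^t\mathbf{w},\alp(j)'\rangle} + c\, e^{\langle T^t\mathbf{w},y'\rangle} \ = \ \sum_{j=0}^n b_j e^{\langle \mathbf{w},\alp(j)\rangle} + c\, e^{\langle \mathbf{w},y\rangle} \ = \ f(e^{\mathbf{w}}).
\end{equation*}
Finally, uniqueness of $g$ is immediate: properties~(2) and~(3) pin down the coefficients and the vertex exponents of $g$ uniquely, and then property~(1) (or equivalently~(4)) forces $y'$. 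I do not anticipate a real obstacle here; the statement is essentially an affine change of variables, and the only point requiring a moment of care is that the prescribed $y'$ actually lies in $\Z^n$, which is precisely why the least common multiple $\mu$ appears in the scaling of the standard simplex.
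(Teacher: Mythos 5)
Your construction is exactly the one the paper uses: take $T$ to be the matrix with columns $\alp(j)/\mu$ (so that $T$ carries the scaled standard simplex $\conv\{0,\mu e_1,\ldots,\mu e_n\}$ onto $\New(f)$), keep the coefficients, read off $y' = \mu(\lam_1,\ldots,\lam_n)$, and verify property (5) via the adjoint identity $\langle T^t\mathbf{w},\alp'\rangle = \langle \mathbf{w},T\alp'\rangle$. The argument is correct and matches the paper's proof in all essentials, including the observation that integrality of $y'$ is exactly what the choice of $\mu$ guarantees.
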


For every $f$ of the Form (\ref{Equ:OurPolynomials}) we call the polynomial $g$, which satisfies all the conditions of the proposition, the \textit{standard form} of $f$. Note that $f(e^{\mathbf{w}})$ is defined in the sense of \eqref{Equ:ExpSumRelaxation} and the support matrix $M^{A'}$ of the standard form of $f$ is of the shape
\begin{eqnarray}
	M^{A'} & = & \left(
\begin{array}{cccccc}
1 & 1 & \cdots & \cdots & 1 & 1 \\
0 & \mu & 0 & \cdots & 0 & \mu \lam_1\\
\vdots & 0 & \ddots & & \vdots & \vdots \\
\vdots & \vdots & & \ddots & 0 & \vdots \\
0 & 0 & \cdots & 0 & \mu & \mu \lam_n\\
\end{array}\right)
\ \in \ \Mat(\Z,(n+1) \times (n+2)). \label{Equ:StandardFormSupportMatrix}
\end{eqnarray}

\begin{proof}
We assume without loss of generality that $\alp(0) = 0$. Let $\ovl M^A_{n+1}$ be the submatrix of $M^A_{n+1}$ obtained by deleting the first row and column; analogously for $\ovl M^{A'}_{n+1}$. By definition, we have $\alp(j) = \ovl M^A_{n+1} e_j$ and $\alp(j)' = \ovl M^{A'}_{n+1} e_j$ for $1 \leq j \leq n$. We construct the polynomial $g$. We choose the same coefficients for $g$ as for $f$. Since $0,\alp(1),\ldots,\alp(n)$ form a simplex, there exists a unique matrix $T \in GL_{n}(\Q)$ such that
\begin{eqnarray*}
	M^A_{n+1} & = & \left(\begin{array}{cc} 1 & 0 \\ 0 & T \\ \end{array}\right) M^{A'}_{n+1}
\end{eqnarray*}
with $M^{A'}$ of the Form \eqref{Equ:StandardFormSupportMatrix} given by $\mu T = (\ovl M^A_{n+1})^{-1}$. Since $y = \sum_{j = 0}^n \lam_j \alp(j)$, it follows that, in affine coordinates, we have $y'_j e_j = T^{-1}\lam_j (\ovl M^A_{n+1} e_j)$, i.e., $y' = \mu(\lam_0,\ldots,\lam_n)$. Thus, (1) -- (4) holds.

We show that $f(e^{\mathbf{w}}) = g(e^{T^t \mathbf{w}})$ for every $\mathbf{w} \in \R^n$. We investigate the monomial $\mathbf{x}^{\alp(j)}$:
\begin{eqnarray*}
	b_j e^{\langle \alp(j), \mathbf{w} \rangle} \ = \ b_j e^{\langle \ovl  M^{A}_{n+1} e_j, \mathbf{w} \rangle} \ = \ b_j e^{\langle T  \ovl M^{A'}_{n+1} e_j , \mathbf{w} \rangle} \ = \ b_j e^{\langle \alp(j)', T^t \mathbf{w} \rangle}
\end{eqnarray*}
For the inner monomials $y$ and $y'$ we know that $y = T y'$ and thus for $y' = \sum_{j = 0}^n \lam_j \alp(j)'$ we have $y = T (\sum_{j = 0}^n \lam_j \alp(j)') = \sum_{j = 0}^n \lam_j T \alp(j)' = \sum_{j = 0}^n \lam_j \alp(j)$. Therefore, (5) follows from
\begin{eqnarray*}
c e^{\langle y,\mathbf{w} \rangle} \ = \ c e^{\langle \sum_{j = 0}^n \lam_j \alp(j),\mathbf{w} \rangle} \ = \ c e^{\sum_{j = 0}^n \lam_j \langle \alp(j),\mathbf{w} \rangle} \ = \ c e^{\sum_{j = 0}^n \lam_j \langle \alp(j)',T^t \mathbf{w} \rangle} \ = \ c e^{\langle y',T^t \mathbf{w} \rangle}.
\end{eqnarray*}
\end{proof}

Proposition \ref{Prop:StandardForm} can easily be generalized to polynomials
\begin{eqnarray}
	f & = & b_0 + \sum_{j = 1}^n b_j \mathbf{x}^{\alp(j)} + \sum_{y(i) \in I} a_i \mathbf{x}^{y(i)} \in \R[\mathbf{x}], \label{Equ:GeneralSimplexf}
\end{eqnarray}
with $\New(f) = \Delta = \conv\{0,\alp(1),\ldots,\alp(n)\}$ being a simplex and $I \subset (\Int(\Delta) \cap \Z^n)$. Every $y(i)$ has a unique convex combination $y(i) = \lam_0^{(i)} + \sum_{j = 1}^n \lam_j^{(i)} \alp(j)$ with $\lam_j^{(i)} > 0$ for all $i,j$.

\begin{cor}
Let $f$ be defined as in \eqref{Equ:GeneralSimplexf}. Then Proposition \ref{Prop:StandardForm} holds literally if we apply (4) for every $y(i)$ and define $\mu$ as the least common multiple of the denominators of all $\lam_j^{(i)}$.
\label{Cor:StandardFormGeneralSimplex}
\end{cor}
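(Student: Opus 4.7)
The plan is to observe that the transformation matrix $T \in GL_n(\Q)$ constructed in the proof of Proposition \ref{Prop:StandardForm} depends only on the vertices $\alp(1), \ldots, \alp(n)$ of the simplex $\Delta$, not on any particular interior point. Hence, once the vertices of $\Delta$ are fixed, a single matrix $T$ simultaneously sends all of them to their standard-form images $\mu e_j$, regardless of how many interior monomials appear in $f$. The only adjustment required for the generalization is to choose $\mu$ large enough to clear the denominators of the coefficients $\lam_j^{(i)}$ of \emph{every} interior point simultaneously.

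First I would fix $\mu$ as the least common multiple of the denominators of all rational numbers $\lam_j^{(i)}$ ranging over $0 \leq j \leq n$ and $y(i) \in I$, and then define $T \in GL_n(\Q)$ exactly as in the proof of Proposition \ref{Prop:StandardForm} via the condition $T \alp(j)' = \alp(j)$ with $\alp(j)' := \mu e_j$. For the standard form $g$ I keep the coefficients of $f$ unchanged and, for each interior point, set $y(i)' := T^{-1} y(i)$. Applying $T^{-1}$ to the convex combination $y(i) = \sum_{j=0}^n \lam_j^{(i)} \alp(j)$ gives by linearity $y(i)' = \sum_{j=0}^n \lam_j^{(i)} \alp(j)'$, and the choice of $\mu$ guarantees $y(i)' \in \N^n$. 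This establishes properties (1)--(4) for $g$ verbatim.

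It remains to check the identity $f(e^{\mathbf{w}}) = g(e^{T^t \mathbf{w}})$ of property (5), which I would do monomial by monomial, summing over the enlarged support. For the vertex monomials, the computation $b_j e^{\langle \alp(j), \mathbf{w}\rangle} = b_j e^{\langle T \alp(j)', \mathbf{w}\rangle} = b_j e^{\langle \alp(j)', T^t \mathbf{w}\rangle}$ is verbatim that of Proposition \ref{Prop:StandardForm}, and for each interior monomial the analogous computation $a_i e^{\langle y(i), \mathbf{w}\rangle} = a_i e^{\langle y(i)', T^t \mathbf{w}\rangle}$ follows from $y(i) = T y(i)'$ together with the adjoint identity $\langle T v, w\rangle = \langle v, T^t w\rangle$. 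Summing over all vertex and interior monomials then gives (5).

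There is no genuine obstacle beyond the single-interior-point case; the corollary is a mechanical extension once one recognizes that $T$ is an invariant of the simplex alone and that the only role of $\mu$ is to make the transformed exponents integral. The single mild subtlety is the simultaneous denominator clearing for all $y(i)$, which is why the least common multiple must be taken across all interior points rather than over a single one as in Proposition \ref{Prop:StandardForm}.
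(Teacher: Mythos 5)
Your proposal is correct and follows essentially the same route as the paper, which simply notes that the proof of Proposition \ref{Prop:StandardForm} never uses uniqueness of $y$ or any special property of it, so that redefining $\mu$ as the least common multiple over all $\lam_j^{(i)}$ makes the transformed support matrix integral and the argument carries over verbatim. Your write-up just makes explicit the two points the paper leaves implicit: that $T$ is determined by the vertices alone, and that property (5) is checked monomial by monomial over the enlarged support.
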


\begin{proof}
By definition of $\mu$, the support matrix $M^{A'}$ is integral again. Since in the proof of Proposition \ref{Prop:StandardForm} neither uniqueness of $y$ is used nor special assumptions about $y$ were made, the statement follows.
\end{proof}

Now, we return to the case of circuit polynomials.

\begin{prop}
Let $f = \lam_0 + \sum_{j = 1}^n b_j \mathbf{x}^{\alp(j)} + c \mathbf{x}^{y} \in P_\Delta^y$ be such that $c < 0$ and $y = \sum_{j = 1}^n \lam_j \alp(j)$ with $\sum_{j = 0}^n \lam_j = 1$, $\lambda_j\geq 0$. Then $f(e^{\mathbf{w}})$ with $\mathbf{w} \in \R^n$ has a unique extremal point, which is always a minimum.
\label{Prop:ExtremalPoint}
\end{prop}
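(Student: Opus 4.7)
The plan is to reduce to the standard form from Proposition \ref{Prop:StandardForm}. Since the change of variables $\mathbf{w} \mapsto T^t \mathbf{w}$ is a linear automorphism of $\R^n$, it establishes a bijection between critical points of $f \circ \Exp$ and those of $g \circ \Exp$ that preserves the second-derivative test. Thus without loss of generality
\[
F(\mathbf{w}) \;:=\; f(e^{\mathbf{w}}) \;=\; \lambda_0 + \sum_{j=1}^n b_j e^{\mu w_j} + c\, e^{\mu \langle \lambda, \mathbf{w}\rangle},
\]
where $\lambda := (\lambda_1,\dots,\lambda_n)$. Recall that $y \in \Int(\Delta)$ forces $\lambda_j > 0$ for every $j = 0, 1, \dots, n$; the strict positivity of $\lambda_0$ will be the decisive ingredient throughout.

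For uniqueness of the critical point, setting $\partial_k F = 0$ for $k = 1,\dots,n$ gives, using $c < 0$, the equations $b_k e^{\mu w_k} = -c\lambda_k e^{\mu \langle \lambda, \mathbf{w}\rangle}$ with both sides strictly positive. Taking logarithms and writing $u := \langle \lambda, \mathbf{w}\rangle$ and $\beta_k := \mu^{-1}\log(-c\lambda_k/b_k)$, the system decouples into the affine relations $w_k = u + \beta_k$. Substituting back into the definition of $u$ yields $\lambda_0\, u = \sum_{k=1}^n \lambda_k \beta_k$, which since $\lambda_0 > 0$ determines $u$ uniquely, and hence each $w_k$ as well.

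To show that this critical point $\mathbf{w}^*$ is a minimum, I would compute the Hessian and substitute the critical equation into the diagonal entries to obtain
\[
H_F(\mathbf{w}^*) \;=\; \mu^2(-c)\, e^{\mu u^*}\bigl(\mathrm{diag}(\lambda) - \lambda\lambda^T\bigr).
\]
For any nonzero $v \in \R^n$ the associated quadratic form is $\sum_{k=1}^n \lambda_k v_k^2 - (\sum_k \lambda_k v_k)^2$, and the weighted Cauchy--Schwarz inequality gives $(\sum_k \lambda_k v_k)^2 \leq (1-\lambda_0)\sum_k \lambda_k v_k^2 < \sum_k \lambda_k v_k^2$, where the final inequality is strict because $\lambda_0 > 0$ and $v \neq 0$. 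Hence $H_F(\mathbf{w}^*)$ is positive definite, so $\mathbf{w}^*$ is a strict local minimum, and by the already-established uniqueness it is the unique extremum of $F$.

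The only delicate step, and really the one place where the argument could fail, is the strict inequality in Cauchy--Schwarz: it would degenerate if $\lambda_0$ were zero, but $\lambda_0 > 0$ is exactly what the hypothesis $y \in \Int(\Delta)$ provides, and it is also precisely what makes the linear system for the critical point non-degenerate in the first place.
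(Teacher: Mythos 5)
Your reduction to the standard form and your computation of the critical point are the same as the paper's: the paper also passes to $g$ via Proposition \ref{Prop:StandardForm}, sets the logarithmic derivatives to zero, and observes that the resulting linear system has coefficient matrix $E_n - \mathbf{1}\lambda^T$, which is invertible precisely because $\det(E_n-\mathbf{1}\lambda^T)=\lambda_0>0$ --- your decoupling $w_k=u+\beta_k$ followed by $\lambda_0 u=\sum_k\lambda_k\beta_k$ is just an explicit solution of that system, and it is correct. Where you genuinely diverge is in classifying the critical point. The paper does \emph{not} compute the Hessian; it argues that along any sequence $\|\mathbf{w}\|\to\infty$ the value $f(e^{\mathbf{w}})$ tends to a limit in $\R_{>0}\cup\{\infty\}$ (the surviving terms sit on a proper face of $\New(f)$, which cannot contain $y$, and are all positive), so the unique critical point must be a \emph{global} minimum. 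Your Hessian computation $H_F(\mathbf{w}^*)=\mu^2(-c)e^{\mu u^*}(\mathrm{diag}(\lambda)-\lambda\lambda^T)$ and the weighted Cauchy--Schwarz argument are correct and give something the paper's proof does not: the minimum is nondegenerate.

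The one thing to be careful about is that your argument, as written, only establishes a \emph{strict local} minimum, and in dimension $n\ge 2$ a smooth function with a unique critical point that is a local minimum need not attain its global minimum there (standard counterexamples exist). The paper uses this proposition in the proof of Proposition \ref{Prop:GlobalMinimizer} to conclude that $e^{\mathbf{s}^*}$ is the unique \emph{global} minimizer, which is what drives the nonnegativity characterization in Theorem \ref{Thm:Positiv}; so the global statement is really what is needed. The gap is easy to close --- e.g.\ by the paper's behavior-at-infinity argument, or by noting directly that $F$ is bounded below and that any minimizing sequence escaping to infinity would force $F$ toward values $\ge$ a positive constant while $F(\mathbf{w}^*)=\lambda_0(1+c\,e^{\mu u^*})<\lambda_0$ --- but some such argument should be supplied.
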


This proposition was used in \cite{TTdW:genusone} (see Lemma 4.2 and Theorem 5.4). For convenience, we give an own, easier proof here.

\begin{proof}
We investigate the standard form $g$ of $f$.  For the partial derivative $x_j \partial g / \partial x_j$ (we can multiply with $x_j$, since $e^{\mathbf{w}} \geq 0$) we have
\begin{eqnarray*}
	x_j \frac{\partial g}{\partial x_j} = b_j \mu x_j^{\mu - 1} + c \lam_j \mu x_j^{\lam_j \mu - 1} \prod_{k = 2}^n x_k^{\lam_k \mu}.
\end{eqnarray*}
Hence, the partial derivative vanishes for some $e^{\mathbf{w}}$ if and only if
\begin{eqnarray*}
	\exp \left(w_j \mu - \sum_{k = 1}^n \lam_k \mu w_k\right) & = & - \frac{c \lam_j}{b_j}.
\end{eqnarray*}
Since the right hand side is strictly positive, we can apply $\log|\cdot|$ on both sides for every partial derivative and obtain the following linear system of equations
\begin{eqnarray*}
	\left(E_n - \left(\begin{array}{ccc} \lam_1 & \cdots & \lam_n \\ \vdots & \ddots & \vdots \\ \lam_1 & \cdots & \lam_n \\ \end{array}\right) \right) \cdot \left(\begin{array}{c}  w_1 \\ \vdots \\ w_n \\ \end{array}\right) & = & 
 \left(\begin{array}{c}  1 / \mu (\log(\lam_1) + \log(-c) - \log(b_1)) \\ \vdots \\ 1 / \mu (\log(\lam_n) + \log(-c) - \log(b_n)) \\ \end{array}\right).
\end{eqnarray*}
Since the matrix on the left hand side has full rank, we have a unique solution.\\

For arbitrary $f$ we have $f(e^{\mathbf{w}}) = g(e^{T^t \mathbf{w}})$ by Proposition \ref{Prop:StandardForm} and, hence, if $\mathbf{w}^*$ is the unique extremal point for $g(e^{\mathbf{w}})$, then $(T^t)^{-1} \mathbf{w}^*$ is the unique extremal point for $f(e^{\mathbf{w}})$.

For every $\mathbf{w} \in \R^n$ with $||\mathbf{w}|| \to \infty$ the polynomial $f$ converges against the terms with exponents which are contained in a particular proper face of $\New(f)$. Since all these terms are strictly positive, $f(e^{\mathbf{w}})$ converges against a number in $\R_{> 0} \cup \{\infty\}$. Thus, the unique extremal point has to be a global minimum.
\end{proof}

For $f \in P_{\Delta}^y$ we define $\mathbf{s}^*_f \in \R^n$ as the unique vector satisfying
\begin{eqnarray*}
	\prod_{k = 1}^n (e^{s_{k,f}^*})^{\alp(j)_k} \ = \ e^{\langle \mathbf{s}^*_f,\alp(j) \rangle} & = & \frac{\lam_j}{b_j} \ \text{ for all } \ 1 \leq j \leq n.
\end{eqnarray*}
$\mathbf{s}^*_f$ indeed is well defined, since application of $\log|\cdot|$ on both sides yields a linear system of equations with variables $s_{k,f}^*$ and the rank of this system has to be $n$, since $\conv(A)$ is a simplex. If the context is clear, then we simply write $\mathbf{s}^*$ instead of $\mathbf{s}^*_f$ and $e^{\mathbf{s}^*}$ instead of $e^{\mathbf{s}^*_f}$. We recall that the circuit number associated to a polynomial $f \in P_{\Delta}^y$ is given by $\Theta_f = \prod_{j = 0}^n \lf(\frac{b_j}{\lam_j}\ri)^{\lam_j} = \prod_{j = 1}^n \lf(\frac{b_j}{\lam_j}\ri)^{\lam_j}$. 

\begin{prop}
For $f \in P_{\Delta}^y$ and $c = -\Theta_f$ the point $\mathbf{s}^* \in \R^n$ is a root and the unique global minimizer of $f(e^{\mathbf{w}})$.
\label{Prop:GlobalMinimizer}
\end{prop}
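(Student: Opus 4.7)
The plan is to verify two facts: (a) $\mathbf{s}^*$ is a critical point of $\mathbf{w}\mapsto f(e^{\mathbf{w}})$, and (b) $f(e^{\mathbf{s}^*})=0$. Uniqueness and the global minimum property then come for free from Proposition \ref{Prop:ExtremalPoint}, since $c=-\Theta_f<0$ (all $b_j>0$ and all $\lambda_j>0$, so $\Theta_f>0$).

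For (b), the computation is immediate from the defining relations of $\mathbf{s}^*$ and $\Theta_f$. Using $\alpha(0)=0$, $b_0=\lambda_0$, and $y=\sum_{j=1}^n\lambda_j\alpha(j)$, the assumption $e^{\langle \mathbf{s}^*,\alpha(j)\rangle}=\lambda_j/b_j$ gives $b_je^{\langle\mathbf{s}^*,\alpha(j)\rangle}=\lambda_j$ for $j\geq 1$, and
\[
e^{\langle y,\mathbf{s}^*\rangle}\;=\;\prod_{j=1}^n e^{\lambda_j\langle\alpha(j),\mathbf{s}^*\rangle}\;=\;\prod_{j=1}^n\left(\frac{\lambda_j}{b_j}\right)^{\lambda_j}\;=\;\Theta_f^{-1},
\]
where the last equality uses $(b_0/\lambda_0)^{\lambda_0}=1$. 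Substituting $c=-\Theta_f$ then yields
\[
f(e^{\mathbf{s}^*})\;=\;\lambda_0+\sum_{j=1}^n\lambda_j\;-\;\Theta_f\cdot\Theta_f^{-1}\;=\;1-1\;=\;0.
\]

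For (a), I differentiate: $\partial_{w_k} f(e^{\mathbf{w}})=\sum_{j=1}^n b_j\alpha(j)_k e^{\langle\alpha(j),\mathbf{w}\rangle}+cy_k e^{\langle y,\mathbf{w}\rangle}$. Evaluating at $\mathbf{w}=\mathbf{s}^*$ and using the two identities above gives
\[
\partial_{w_k} f(e^{\mathbf{w}})\big|_{\mathbf{s}^*}\;=\;\sum_{j=1}^n\lambda_j\alpha(j)_k\;-\;\Theta_f\cdot y_k\cdot\Theta_f^{-1}\;=\;y_k-y_k\;=\;0.
\]
So $\mathbf{s}^*$ is a critical point.

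Combining (a) and (b) with Proposition \ref{Prop:ExtremalPoint}, which asserts that $f(e^{\mathbf{w}})$ admits a unique critical point and it is a global minimum whenever $c<0$, we conclude that $\mathbf{s}^*$ is that unique global minimizer and that the minimum value equals $0$, so $\mathbf{s}^*$ is also a root of $\mathbf{w}\mapsto f(e^{\mathbf{w}})$. No serious obstacle arises; the only delicate point is checking that the normalization $b_0=\lambda_0$ makes the $j=0$ factor in $\Theta_f$ drop out, which is exactly what allows the value $1-1$ to appear at the end of the computation in (b).
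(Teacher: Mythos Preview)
Your proof is correct and follows essentially the same approach as the paper: you verify $f(e^{\mathbf{s}^*})=0$ and that all partial derivatives vanish at $\mathbf{s}^*$, then invoke Proposition~\ref{Prop:ExtremalPoint} for uniqueness and the global minimum property. The only cosmetic difference is that the paper writes the derivative as $x_j\,\partial f/\partial x_j$ evaluated at $e^{\mathbf{s}^*}$ rather than $\partial_{w_k} f(e^{\mathbf{w}})$, but these are identical by the chain rule.
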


Due to this proposition we call the point $\mathbf{s}^*$ the \textit{norm minimizer} of $f$. We remark that this proposition was already shown for polynomials in $P_{\Delta}^y$ in standard form in \cite{Fidalgo:Kovacec} and for arbitrary simplices but in a more complicated way in \cite{TTdW:genusone}.

\begin{proof}
For $f(e^{\mathbf{s}^*})$ we have
\begin{eqnarray*}
	f\left(e^{\mathbf{s}^*}\right) & = & \lam_0 + \sum_{j = 1}^n b_j e^{\langle \mathbf{s}^*, \alp(j) \rangle} - \Theta_f e^{\langle \mathbf{s}^*, y \rangle} 
 \ = \ \sum_{j = 0}^n \lam_j - \Theta_f \cdot\prod_{j = 1}^n \left(\frac{\lam_j}{b_j} \right)^{\lam_j} \ = \ 1 - 1 \ = \ 0.
\end{eqnarray*}

For the minimizer statement, we investigate the partial derivatives $x_j \partial f / \partial x_j$ (we can multiply with $x_j$, since $e^{\mathbf{w}} > 0$). Since $y_j = \sum_{k = 1}^n \lam_j \alp_j(k)$, we obtain
\begin{eqnarray*}
	x_j \frac{\partial f}{\partial x_j} & = & \sum_{k = 1}^n b_k \alp_j(k) \mathbf{x}^{\alp(k)} - \Theta_f \cdot \left(\sum_{k = 1}^n \lam_j \alp_j(k)\right) \mathbf{x}^{y}.
\end{eqnarray*}

Evaluation of the partial derivative at $e^{\mathbf{s}^*}$ yields
\begin{eqnarray*}
	x_j \frac{\partial f}{\partial x_j}(e^{\mathbf{s}^*}) & = & \sum_{k = 1}^n b_k \alp_j(k) \left(\frac{\lam_k}{b_k}\right) - \Theta_f \left(\sum_{k = 1}^n \lam_j \alp_j(k)\right) \cdot\prod_{j = 1}^n \left(\frac{\lam_j}{b_j} \right)^{\lam_j} \\
	& = & \sum_{k = 1}^n \lam_j \alp_j(k) - \sum_{k = 1}^n \lam_j \alp_j(k) \ = \ 0.
\end{eqnarray*}

Finally, by Proposition \ref{Prop:ExtremalPoint}, $e^{\mathbf{s}^*}$ is the unique global minimizer of $f(e^{\mathbf{w}})$.
\end{proof}

In some contexts it is more convenient to work with a Laurent polynomial supported on a circuit where the interior point $y$ equals the origin. With the same argumentation as before we find a suitable standard form.

\begin{cor}
Let $f$ and all notations be as in Proposition \ref{Prop:StandardForm}. Then there exists a unique Laurent polynomial $g$ of the Form (\ref{Equ:OurPolynomials}) with $\supp(g) = A'' = \{\alp(0)'',\ldots,\alp(n)'',0\} \subset \Z^n$ such that the following properties hold:
\begin{enumerate}
	\item $M_A = \left(\begin{array}{cc} 1 & 0 \\ 0 & T \\ \end{array}\right) M_{A''}$ for some $T \in GL_n(\Q)$,
	\item $f$ and $g$ have the same coefficients,
	\item $\alp(j)'' = \mu \cdot e_j$ for every $1 \leq j \leq n$,
	\item $\sum_{j = 0}^n \lam_j \alp(j)'' = 0$,
	\item $f(e^{\mathbf{w}}) = g(e^{T^t\mathbf{w}})$ for all $\mathbf{w} \in \R^n$.
\end{enumerate}
\label{Cor:StandardZeroForm}
\end{cor}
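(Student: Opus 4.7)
The plan is to mimic the proof of Proposition \ref{Prop:StandardForm} with the only change that $y$, rather than $\alpha(0)$, is the point that gets sent to the origin. In fact the entire structural content is the same: we are choosing an affine map that sends a distinguished vertex-plus-interior-point configuration of the circuit to a standard one. The existence and uniqueness of $g$ will follow from the fact that $\{\alpha(1)-y,\ldots,\alpha(n)-y\}$ forms a $\Q$-basis of $\R^n$ (since $\{\alpha(0),\ldots,\alpha(n)\}$ is a simplex with $y$ in its interior).

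Concretely, I would first define $T \in GL_n(\Q)$ as the unique matrix with $T(\mu e_j) = \alpha(j) - y$ for $j=1,\ldots,n$, i.e.\ $\mu T = (\alpha(1)-y \mid \cdots \mid \alpha(n)-y)$. Set $\alpha(j)'' = \mu e_j$ for $1\leq j\leq n$ and $\alpha(0)'' := T^{-1}(\alpha(0)-y) = -T^{-1}y$, and take $y''=0$. Then (2) and (3) hold by construction, and (1) is just the coordinate statement $\alpha(j) - y = T\alpha(j)''$ read off the support matrices (the translation by $y$ in the last column being encoded in the fact that we now single out $y''=0$). Property (4) is a quick check: using $\sum_j\lambda_j=1$ and $\sum_j\lambda_j\alpha(j)=y$,
\begin{eqnarray*}
T\Bigl(\sum_{j=0}^n \lambda_j \alpha(j)''\Bigr) \ = \ \sum_{j=0}^n \lambda_j (\alpha(j)-y) \ = \ y - y \ = \ 0,
\end{eqnarray*}
and invertibility of $T$ yields $\sum_j \lambda_j \alpha(j)''=0$.

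For (5), I would define $g$ as the Laurent polynomial with support $A''$ and the same coefficients as $f$, and then run the monomial-by-monomial computation as in Proposition \ref{Prop:StandardForm}: for each $j$,
\begin{eqnarray*}
b_j\,e^{\langle \alpha(j),\,\mathbf{w}\rangle} \ = \ e^{\langle y,\mathbf{w}\rangle}\cdot b_j\,e^{\langle \alpha(j)-y,\,\mathbf{w}\rangle} \ = \ e^{\langle y,\mathbf{w}\rangle}\cdot b_j\,e^{\langle T\alpha(j)'',\,\mathbf{w}\rangle} \ = \ e^{\langle y,\mathbf{w}\rangle}\cdot b_j\,e^{\langle \alpha(j)'',\,T^t\mathbf{w}\rangle},
\end{eqnarray*}
and similarly for the $y$-term, which contributes $c\,e^{\langle y,\mathbf{w}\rangle}=e^{\langle y,\mathbf{w}\rangle}\cdot c\,e^{\langle 0,T^t\mathbf{w}\rangle}$. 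The common factor $e^{\langle y,\mathbf{w}\rangle}$ reflects precisely the fact that we have passed to a Laurent polynomial: absorbing the monomial prefactor $\mathbf{x}^{y}$ (a positive function on $(\R_{>0})^n$) into the identification, one obtains the desired identity $f(e^{\mathbf{w}})=g(e^{T^t\mathbf{w}})$ in the Laurent sense.

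The main obstacle I anticipate is an integrality issue: the vector $\alpha(0)''=-T^{-1}y$ need not lie in $\Z^n$ for the $\mu$ prescribed in Proposition \ref{Prop:StandardForm}, so $g$ may genuinely require a Laurent polynomial (or, equivalently, an enlargement of $\mu$) to make sense. This is the reason the statement is phrased for Laurent polynomials rather than for ordinary polynomials; since this weakening does not affect the analytic identity in (5) nor any of the algebraic relations (1)--(4), the argument goes through.
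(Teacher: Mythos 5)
Your argument is correct and is essentially the paper's own proof: the paper likewise divides $f$ by $\mathbf{x}^{y}$ and reruns the proof of Proposition \ref{Prop:StandardForm} with the column of $\alp(0)$ deleted instead of that of $y$, which produces exactly your $T$ with $T(\mu e_j)=\alp(j)-y$ and the same bookkeeping for properties (1)--(5). Your closing remark on integrality of $\alp(0)''=-(\mu/\lam_0)(\lam_1,\dots,\lam_n)^t$ is a genuine subtlety the paper glosses over (e.g.\ $\lam=(2/5,2/5,1/5)$ gives $\mu=5$ and a non-integral entry $5/2$), and enlarging $\mu$ as you suggest is the right fix.
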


For every polynomial $f$ of the Form (\ref{Equ:OurPolynomials}), we call the polynomial $g$, which satisfies all conditions in Corollary \ref{Cor:StandardZeroForm}, the \textit{zero standard form} of $f$. Note that the support matrix $M^{A''}$ of the zero standard form of $f$ is of the shape
\begin{eqnarray}
	M^{A''} & = & \left(
\begin{array}{cccccc}
1 & 1 & \cdots & \cdots & 1 & 1 \\
- \frac{\lam_1 \mu}{\lam_0} & \mu & 0 & \cdots & 0 & 0\\
\vdots & 0 & \ddots & & \vdots & \vdots \\
\vdots & \vdots & & \ddots & 0 & \vdots \\
- \frac{\lam_n \mu}{\lam_0} & 0 & \cdots & 0 & \mu & 0\\
\end{array}\right)
\ \in \ \Mat(\Z,(n+1) \times (n+2)). \label{Equ:ZeroStandardFormSupportMatrix}
\end{eqnarray}

\begin{proof}
We divide $f$ by $\mathbf{x}^{y}$, which is always possible, since $e^{\mathbf{w}} > 0$. We apply literally the proof of Proposition \ref{Prop:StandardForm} with the exception of using the matrix $M_0^A$ instead of $M_{n+1}^A$ and the convex combination $-\lam_0 \alp(0) = \sum_{j = 1}^n \lam_j \alp(j)$ instead of $y = \sum_{j = 0}^n \lam_j \alp(j)$.
\end{proof}

An advantage of the zero standard form is that the global minimizer does not longer depend on the choice of $c$.

\begin{cor}
For $f \in P_{\Delta}^y$ the point $e^{\mathbf{s}^*}$ is a global minimizer for $(f / \mathbf{x}^y)(e^{\mathbf{w}})$ independent of the choice of $c$.
\label{Cor:GlobalMinimizer}
\end{cor}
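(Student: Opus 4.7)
The plan is to exploit the fact that dividing $f$ by $\mathbf{x}^y$ pushes $c$ into a constant term of the resulting Laurent polynomial, and constants do not affect the location of a minimizer. Writing out
\begin{eqnarray*}
h(\mathbf{w}) \ := \ (f/\mathbf{x}^y)(e^{\mathbf{w}}) & = & \lam_0 \, e^{-\langle \mathbf{w}, y\rangle} + \sum_{j = 1}^n b_j \, e^{\langle \mathbf{w}, \alp(j) - y\rangle} + c,
\end{eqnarray*}
the parameter $c$ enters only additively, so the task reduces to showing that $\mathbf{s}^*$ is the unique global minimizer of $h - c$.

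First I would verify that $\mathbf{s}^*$ is a critical point of $h$ by computing $\nabla h$ directly and using the defining relations $e^{\langle \mathbf{s}^*,\alp(j)\rangle} = \lam_j/b_j$ together with $e^{\langle \mathbf{s}^*, y\rangle} = \prod_j (\lam_j/b_j)^{\lam_j} = 1/\Theta_f$, which follows from $y = \sum_{j=1}^n \lam_j \alp(j)$ and $\sum_{j=0}^n \lam_j = 1$. This yields
\begin{eqnarray*}
\nabla h(\mathbf{s}^*) & = & \Theta_f\,\Bigl(-\lam_0 \, y + \sum_{j = 1}^n \lam_j \bigl(\alp(j) - y\bigr)\Bigr) \ = \ \Theta_f\,\bigl(y - y\bigr) \ = \ 0,
\end{eqnarray*}
in complete analogy to the computation carried out in the proof of Proposition \ref{Prop:GlobalMinimizer}.

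Next I would argue that this critical point is the unique global minimum. Since all coefficients $\lam_0, b_1, \dots, b_n$ are strictly positive, $h - c$ is a positive linear combination of the convex exponentials $e^{\langle \mathbf{w}, v\rangle}$ for $v \in \{-y, \alp(1)-y, \dots, \alp(n)-y\}$. The Hessian equals $\sum_k (\text{positive})\cdot v_k v_k^{\,t} e^{\langle \mathbf{w},v_k\rangle}$, and because $\alp(1),\dots,\alp(n)$ are linearly independent (they span a simplex together with the origin), the set $\{-y, \alp(1)-y,\dots,\alp(n)-y\}$ spans $\R^n$; hence the Hessian is positive definite and $h$ is strictly convex.

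The only remaining point is existence: strict convexity plus the existence of a critical point forces that critical point to be the unique global minimum, so combining with the previous step finishes the proof. Alternatively one may appeal directly to Corollary \ref{Cor:StandardZeroForm}: in zero standard form the origin lies in the interior of the Newton polytope of the positive part, so $h\to +\infty$ whenever $\|\mathbf{w}\|\to\infty$ and a minimum must be attained. I do not expect any real obstacle here — the argument is a straightforward variant of Proposition \ref{Prop:GlobalMinimizer}, the only new ingredient being the observation that $c$ contributes only a harmless additive constant after division by $\mathbf{x}^y$.
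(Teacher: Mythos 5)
Your proof is correct, and it takes a genuinely different route from the paper on the key point of uniqueness and globality. The paper's proof invokes Corollary \ref{Cor:StandardZeroForm} to pass to the zero standard form (where the interior monomial becomes the constant term $c$) and then re-runs the argument of Proposition \ref{Prop:GlobalMinimizer}, whose uniqueness part ultimately rests on Proposition \ref{Prop:ExtremalPoint} (solving a linear system for the critical point and a growth-at-infinity argument). You instead stay in the original coordinates, observe directly that dividing by $\mathbf{x}^y$ makes $c$ an additive constant, verify criticality of $\mathbf{s}^*$ by the same computation as in Proposition \ref{Prop:GlobalMinimizer}, and then obtain uniqueness and global minimality from strict convexity: the Hessian of $\sum_k c_k e^{\langle \mathbf{w}, v_k\rangle}$ is $\sum_k c_k e^{\langle \mathbf{w}, v_k\rangle} v_k v_k^t$, which is positive definite because $\{-y,\alp(1)-y,\ldots,\alp(n)-y\}$ spans $\R^n$ (their pairwise differences include the linearly independent $\alp(1),\ldots,\alp(n)$). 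This buys a cleaner and more self-contained treatment of the "independent of $c$" clause and sidesteps the coordinate transformation entirely; in fact your convexity observation would also yield an alternative proof of Proposition \ref{Prop:ExtremalPoint} itself. The paper's route, by contrast, reuses machinery (standard forms) that it needs elsewhere anyway, e.g.\ in Section \ref{Sec:amoebas}. Both arguments are sound; note only that the coercivity remark at the end of your proposal is superfluous, since strict convexity together with the existence of a critical point already forces that point to be the unique global minimum.
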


\begin{proof}
By Corollary \ref{Cor:StandardZeroForm}, we can transform $f$ into zero standard form with $y = 0$. Then the proof of Proposition \ref{Prop:GlobalMinimizer} can be literally applied again with the exception of $(f/\mathbf{x}^y)(e^{\mathbf{w}}) = 0$ if and only if $c = -\Theta_f$.
\end{proof}

\subsection{Nonnegativity of Polynomials Supported on a Circuit}
\label{SubSec:Nonnegative}

In this section, we characterize nonnegativity of polynomials in $P_\Delta^y$. The following lemma allows us to reduce the case of $y \in \partial\Delta$ to the case $y \in \Int(\Delta)$.

\begin{lemma}
 Let $f = b_0 + \sum_{j=1}^n b_j\mathbf{x}^{\alpha(j)} + c\cdot \mathbf{x}^y$ be such that the Newton polytope is given by $\Delta = \New(f) = \conv\{0,\alpha(1),\dots,\alpha(n)\}$ and $y \in \partial\Delta$. Furthermore, let $F$ be the face of $\Delta$ containing $y$. Then $f$ is nonnegative if and only if the restriction of $f$ to the face $F$ is nonnegative.
\end{lemma}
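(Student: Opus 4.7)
My plan is to establish the two implications separately. The content of the lemma is that the monomials $b_j \mathbf{x}^{\alpha(j)}$ with $\alpha(j)\notin F$ contribute nothing obstructive to nonnegativity, because the standing evenness assumption on $\Delta$ makes each of them a monomial square, while the restriction $f|_F$ carries all the nontrivial interaction with the interior term $c\mathbf{x}^y$.

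For the direction ``$f|_F \ge 0 \Rightarrow f \ge 0$'' I would simply invoke the standing evenness assumption on $\Delta$. Every vertex $\alpha(j)\notin F$ then lies in $(2\N)^n$ and has coefficient $b_j>0$, so each such monomial $b_j\mathbf{x}^{\alpha(j)}$ is a nonnegative monomial square. Writing
\[
f(\mathbf{x}) \;=\; f|_F(\mathbf{x}) \;+\; \sum_{j:\,\alpha(j)\notin F} b_j\,\mathbf{x}^{\alpha(j)},
\]
nonnegativity of $f|_F$ forces nonnegativity of $f$ at once.

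For the reverse direction ``$f\ge 0 \Rightarrow f|_F\ge 0$'' I would apply a torus degeneration that isolates the face $F$. Since $F$ is a face of $\Delta$, there exists $v\in\R^n$ such that the linear functional $\langle v,\cdot\rangle$ attains its minimum $m$ over $\Delta$ precisely on $F$; in particular $\langle v,\alpha(j)\rangle - m > 0$ whenever $\alpha(j)\notin F$, while $\langle v,y\rangle = m$ because $y\in F$. For any $t > 0$ and any $\mathbf{x}\in\R^n$, the componentwise rescaling $x_i\mapsto t^{v_i}x_i$ is sign-preserving (because $t^{v_i}>0$), so
\[
0 \;\le\; t^{-m}\, f\bigl(t^{v_1}x_1,\ldots,t^{v_n}x_n\bigr) \;=\; \sum_{j=0}^n b_j\, t^{\langle v,\alpha(j)\rangle - m}\, \mathbf{x}^{\alpha(j)} \;+\; c\, \mathbf{x}^y,
\]
with the convention $\alpha(0)=0$. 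Letting $t\to 0^+$, every term with $\alpha(j)\notin F$ drops out and the surviving terms assemble to exactly $f|_F(\mathbf{x})$, so $f|_F(\mathbf{x})\ge 0$.

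The only point demanding a little care is verifying that the scaling $x_i\mapsto t^{v_i}x_i$ is defined and sign-preserving on all of $\R^n$, which holds because $t^{v_i}$ is a positive real number for $t>0$ regardless of whether $v_i$ is an integer. Existence of a supporting vector $v$ for the face $F$ is elementary convex geometry, so I expect no real obstacle in the argument beyond these bookkeeping items.
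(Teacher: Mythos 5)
Your proof is correct and follows essentially the same route as the paper: the sufficiency direction is the identical observation that the off-face terms are monomial squares, and for the necessity direction the paper simply cites Reznick, whereas you supply the standard supporting-hyperplane/torus-degeneration argument explicitly (and correctly). No gaps.
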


\begin{proof}
 For the necessity of nonnegativity of the restricted polynomial, see \cite{Reznick:AGI}. Otherwise, the restriction to the face $F$ contains the monomial $\mathbf{x}^y$ and this restriction is nonnegative. Since all other terms in $f$ correspond to the (even) vertices of $\Delta$ and have nonnegative coefficients, the claim follows.
\end{proof}

Now, we show the first part of our main Theorem \ref{Thm:MainEquivalences} by characterizing nonnegative polynomials $f \in P_{\Delta}^y$ supported on a circuit. Recall that we denote such polynomials of degree $2d$ in in $n$ variables as $P_{n,2d}^y$. Note that this theorem covers the known special cases of agiforms \cite{Reznick:AGI} and circuit polynomials in standard form \cite{Fidalgo:Kovacec}.

\begin{thm}
 Let $f =  \lambda_0 + \sum_{j=1}^n b_j\mathbf{x}^{\alpha(j)} + c\cdot \mathbf{x}^y \in P_{\Delta}^y$ be of the Form \eqref{Equ:OurPolynomials} with $\alp(j) \in (2\N)^n$. Then the following are equivalent.
\begin{enumerate}
 \item $f \in P_{n,2d}^y$, i.e., $f$ is nonnegative.
\item $|c| \leq \Theta_f$ and $y \notin (2\mathbb N)^n$ or $c \geq - \Theta_f$ and $y \in (2\mathbb N)^n$. 
\end{enumerate}
\label{Thm:Positiv}
\end{thm}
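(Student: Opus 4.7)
The plan is to apply the norm relaxation method outlined in Section~\ref{SubSec:NormMinimizerStrategy} together with Proposition~\ref{Prop:GlobalMinimizer}. After factoring out a monomial square I may assume $\alpha(0) = 0$, and since nonnegativity is invariant under positive scaling I may further assume $b_0 = \lambda_0$. The same computation that appears in the proof of Proposition~\ref{Prop:GlobalMinimizer} then shows that any polynomial of the form $\lambda_0 + \sum_{j=1}^n b_j \mathbf{x}^{\alpha(j)} + c' \mathbf{x}^y$ evaluates at the norm minimizer $e^{\mathbf{s}^*}$ to $1 + c'/\Theta_f$ for arbitrary real $c'$, and this identity will drive both directions of the equivalence.

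For the direction $(2) \Rightarrow (1)$, I would split on the parity of $y$. If $y \in (2\mathbb{N})^n$ and $c \geq 0$, then $f$ is a sum of monomial squares and is trivially nonnegative. Otherwise, for any $\mathbf{x} \in \mathbb{R}^n$ one has the pointwise lower bound
\[
f(\mathbf{x}) \ \geq \ \lambda_0 + \sum_{j=1}^n b_j\,|\mathbf{x}|^{\alpha(j)} - |c|\,|\mathbf{x}|^y \ =: \ g_{|c|}(|\mathbf{x}|),
\]
since every $\alpha(j)$ is even and $c\,\mathbf{x}^y \geq -|c|\,|\mathbf{x}|^y$. Substituting $|\mathbf{x}| = e^{\mathbf{w}}$, the function $g_{|c|}(e^{\mathbf{w}})$ is pointwise nondecreasing as $|c|$ decreases, so it suffices to check the extremal case $|c| = \Theta_f$, where Proposition~\ref{Prop:GlobalMinimizer} asserts that the global minimum equals $0$. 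Hence $g_{|c|} \geq 0$ whenever $|c| \leq \Theta_f$, which forces $f \geq 0$ on $\mathbb{R}^n$.

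For $(1) \Rightarrow (2)$ I would argue contrapositively and exhibit a point at which $f$ is strictly negative. Suppose either $y \notin (2\mathbb{N})^n$ with $|c| > \Theta_f$, or $y \in (2\mathbb{N})^n$ with $c < -\Theta_f$. In the first case, since at least one coordinate of $y$ is odd, I can flip signs of the corresponding coordinates of $e^{\mathbf{s}^*}$ to obtain a point $\mathbf{x}^\circ \in (\mathbb{R}^*)^n$ with $|\mathbf{x}^\circ| = e^{\mathbf{s}^*}$ and $\sgn\bigl(c\cdot (\mathbf{x}^\circ)^y\bigr) = -1$; the vertex monomials $b_j\,\mathbf{x}^{\alpha(j)}$ are unchanged by the sign flip because every $\alpha(j)$ is even. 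The evaluation identity from the first paragraph, applied with effective coefficient $-|c|$, then yields $f(\mathbf{x}^\circ) = 1 - |c|/\Theta_f < 0$. In the second case, $\mathbf{x}^\circ = e^{\mathbf{s}^*}$ itself gives $f(\mathbf{x}^\circ) = 1 + c/\Theta_f < 0$.

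The main obstacle, beyond invoking Proposition~\ref{Prop:GlobalMinimizer}, is the case analysis interlocking the sign of $c$, the parity of $y$, and the sign choices available on coordinates of $\mathbb{R}^n$: one must verify that the worst-case contribution $-|c|\,|\mathbf{x}|^y$ of the interior monomial can actually be realized by some real $\mathbf{x}$, which is precisely what distinguishes the two conditions on $c$ appearing in the statement. Once this bookkeeping is in place, everything reduces to the elementary equivalence $1 + c'/\Theta_f \geq 0 \Leftrightarrow c' \geq -\Theta_f$ together with the global minimizer property established in Section~\ref{SubSec:NormMinimizer}.
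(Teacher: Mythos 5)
Your proposal is correct and follows essentially the same route as the paper's proof: the norm relaxation $f(\mathbf{x}) \geq g_{|c|}(|\mathbf{x}|)$ combined with the sign-flip argument on the odd coordinates of $y$, the evaluation identity $f(e^{\mathbf{s}^*}) = 1 + c'/\Theta_f$ from Proposition~\ref{Prop:GlobalMinimizer}, and monotonicity of $f_c(e^{\mathbf{w}})$ in the inner coefficient. The only cosmetic difference is that you organize the argument as two separate implications with an explicit negative witness point, whereas the paper first establishes the equivalence of nonnegativity on $\R^n$ with nonnegativity on the positive orthant and then runs a single chain of equivalences; the one step you leave implicit (passing from $\R^n_{>0}$ to $\R^n_{\geq 0}$ by density and continuity) is stated explicitly in the paper but is routine.
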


\begin{proof} First, observe that $f \geq 0$ is trivial for $c \geq 0$ and $y \in (2\mathbb N)^n$, since in this case $f$ is a sum of monomial squares.

We apply the norm relaxation strategy introduced in Section \ref{SubSec:NormMinimizerStrategy}. Initially, we show that $f(\mathbf{x}) \geq 0$ if and only if $f(e^{\mathbf{w}}) \geq 0$ for all $f \in P_{\Delta}^y$.  Let without loss of generality $y_1,\ldots,y_k$ be the odd entries of the exponent vector $y$. Thus, for every $1 \leq j \leq k$ replacing $x_j$ by $-x_j$ changes the sign of the term $c\cdot \mathbf{x}^y$. Since all other terms of $f$ are nonnegative for every choice of $\mathbf{x} \in \R^n$, we have $f(\mathbf{x}) \geq 0$ if $\sgn(c) \cdot \sgn(x_1) \cdots \sgn(x_k) = 1$. Since furthermore, for $\sgn(c) \cdot \sgn(x_1) \cdots \sgn(x_k) = -1$ we have $c\cdot \mathbf{x}^y = -|c| \cdot |x_1|^{y_1} \cdots |x_n|^{y_n}$, we can assume $c \leq 0$ and $\mathbf{x} \geq 0$ without loss of generality. Then $\lam_0 + \sum_{j = 0}^n b_j \mathbf{x}^{\alp(j)} - |c| |\mathbf{x}|^{y}$ is nonnegative for all $\mathbf{x} \in \R^n$ if and only if this is the case for all $\mathbf{x} \in \R_{\geq 0}^n$. And since $\R_{> 0}^{n}$ is an open, dense set in $\R_{\geq 0}^{n}$, we can restrict ourselves to the strict positive orthant. With the componentwise bijection between $\R_{> 0}^n$ and $\R^n$ given by the $\Exp$-map, it follows that $f(\mathbf{x}) \geq 0$ for all $\mathbf{x} \in \R^n$ if and only if $f(e^{\mathbf{w}}) \geq 0$ for all $\mathbf{w} \in \R^n$. Hence, the theorem is shown if we prove that$f(e^{\mathbf{w}}) \geq 0$ for all $\mathbf{w} \in \R^n$ if and only if $c \in [-\Theta_f,0]$.\\

We fix some arbitrary $b_1,\ldots,b_n \in \R_{> 0}$ and denote by $(f_c)_{c \in \R}$ be the corresponding family of polynomials in $P_{\Delta}^y$. By Proposition \ref{Prop:GlobalMinimizer}, $f_c(e^{\mathbf{w}})$ has a unique global minimum for $c = -\Theta_f$ attained at $\mathbf{s}^* \in \R^n$ satisfying $f_{-\Theta_f}(e^{\mathbf{s}^*}) = 0$. Since $e^{\mathbf{s}^*}$ is a \textit{global} (norm) minimum, this implies, in particular, $f_c(e^{\mathbf{w}}) \geq 0$ for all $\mathbf{w} \in \R^n$ if $c = -\Theta_f$.

But this fact also completes the proof for general $c < 0$: Since $c \cdot e^{\langle \mathbf{w}, y \rangle}$ is the unique negative term in $f_c(e^{\mathbf{w}})$ for all $\mathbf{w} \in \R^n$, a term by term inspection yields that $f_c(e^{\mathbf{w}}) < f_{-\Theta_f}(e^{\mathbf{w}})$ if and only if $c < -\Theta_f$. Hence, $f_c(e^{\mathbf{w}}) < 0$ for some $\mathbf{w} \in \R^n$ if and only if $c < -\Theta_f$.
\end{proof}

An immediate consequence of the theorem is an upper bound for the number of zeros of polynomials $f \in \partial P_{n,2d}^y$.

\begin{cor}\label{cor:zerobound}
Let $f \in \partial P_{n,2d}^y$. Then $f$ has at most $2^n$ affine real zeros $\mathbf{v} \in \R^n$, which all satisfy $|x_j| = e^{s_j^*}$ for all $1 \leq j \leq n$.
\end{cor}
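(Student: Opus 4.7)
The plan is to combine the boundary characterization provided by Theorem \ref{Thm:Positiv} with the uniqueness of the norm minimizer from Proposition \ref{Prop:GlobalMinimizer}. By that theorem, $f \in \partial P_{n,2d}^y$ forces $|c| = \Theta_f$: if $y \notin (2\N)^n$ one has $c = \pm \Theta_f$, while if $y \in (2\N)^n$ the case $c \geq 0$ makes $f$ a sum of monomial squares (hence strictly positive away from zero and not on the boundary), so only $c = -\Theta_f$ survives. Introduce the auxiliary polynomial
\begin{equation*}
\tilde{f} \ = \ \lam_0 + \sum_{j = 1}^n b_j \mathbf{x}^{\alp(j)} - \Theta_f \cdot \mathbf{x}^y \ \in \ P_\Delta^y,
\end{equation*}
which by Proposition \ref{Prop:GlobalMinimizer} satisfies $\tilde{f}(e^{\mathbf{w}}) \geq 0$ on $\R^n$ with equality only at $\mathbf{w} = \mathbf{s}^*$.

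Now fix a real zero $\mathbf{v}\in\R^n$ of $f$. Since every $\alp(j)$ is even and every $b_j,\lam_0$ is positive, each of $\lam_0$ and $b_j \mathbf{v}^{\alp(j)}$ is nonnegative, so $f(\mathbf{v}) = 0$ forces $c \mathbf{v}^y \leq 0$ and, passing to absolute values,
\begin{equation*}
|c| \cdot |\mathbf{v}|^y \ = \ \lam_0 + \sum_{j = 1}^n b_j |\mathbf{v}|^{\alp(j)}.
\end{equation*}
I claim $\mathbf{v}$ has no zero coordinate. Because $y$ lies in the interior of the full-dimensional simplex with vertices $0,\alp(1),\ldots,\alp(n) \in (2\N)^n$, every entry of $y$ is strictly positive (otherwise all $n+1$ vertices would lie in a coordinate hyperplane, contradicting full-dimensionality). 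Hence if some $v_k = 0$, then $\mathbf{v}^y = 0$ and $f(\mathbf{v}) \geq \lam_0 > 0$, a contradiction. Therefore $\mathbf{w} := \Log|\mathbf{v}|$ is well-defined.

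Substituting $|c| = \Theta_f$ in the displayed identity yields $\tilde{f}(e^{\mathbf{w}}) = 0$, and the uniqueness clause of Proposition \ref{Prop:GlobalMinimizer} forces $\mathbf{w} = \mathbf{s}^*$, i.e., $|v_j| = e^{s_j^*}$ for every $1 \leq j \leq n$. Consequently every real zero of $f$ is of the form $(\pm e^{s_1^*},\ldots,\pm e^{s_n^*})$, and there are at most $2^n$ such sign patterns, which gives the bound. I do not anticipate any serious obstacle; the only slightly delicate point is the full-dimensionality argument that certifies all entries of $y$ are positive and therefore that a real zero cannot lie on a coordinate hyperplane.
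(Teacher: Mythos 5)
Your proof is correct and follows essentially the same route as the paper, which simply cites the proof of Theorem \ref{Thm:Positiv}: reduce to $|c|=\Theta_f$, pass to absolute values to land on the auxiliary polynomial with $c=-\Theta_f$, and invoke the uniqueness of the norm minimizer $\mathbf{s}^*$ from Proposition \ref{Prop:GlobalMinimizer}. You additionally spell out the (correct) detail that no zero of $f$ can have a vanishing coordinate, which the paper leaves implicit.
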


\begin{proof}
Assume $f \in \partial P_{n,2d}^y$ and $f(\mathbf{x}) = 0$ for some $\mathbf{x} \in \R^n$. Then we know by the proof of Theorem \ref{Thm:Positiv} that $|x_j| = e^{s_j^*}$. Thus, $\mathbf{x} = (\pm e^{s_1^*},\ldots,\pm e^{s_n^*})$.
\end{proof}

The bound in Corollary \ref{cor:zerobound} is sharp as demonstrated by the well-known Motzkin polynomial $f = 1 + x_1^2x_2^4 + x_1^4x_2^2 - 3x_1^2x_2^2 \in P_{2,6}^y$. The zeros are given by $\mathbf{x} = (\pm 1, \pm 1)$. Furthermore, it is important to note that the maximum number of zeros does not depend on the degree of the polynomials, which is in sharp contrast to previously known results concerning the maximum number of zeros of nonnegative polynomials and sums of squares, \cite{Reznick:realzeros}.\\

In order to illustrate the results of this section, we give an example. Let $f = 1 + x_1^2x_2^4 + x_1^4x_2^2 - 3x_1^2x_2^2$ be the Motzkin polynomial. $f$ is supported on a circuit $A$ with $y = \sum_{j = 0}^2 \frac{1}{3} \alp(j)$. We apply Proposition \ref{Prop:StandardForm} and compute the standard form $g$ of $1/3 \cdot f$. Then $g$ is the polynomial, which is supported on a circuit $A' = \{0,\alp(1)',\alp(2)'\}$ satisfying $M^A = \left(\begin{array}{cc} 1 & 0 \\ 0 & T \\ \end{array}\right) M^{A'}$ for some $T \in GL_{n}(\Q)$ with $\alp(1)' = (\mu,0)^t$, $\alp(2)' = (0,\mu)^t$ and $y' =  1/3 \alp(1)' + 1/3 \alp(2)'$, where $\mu = \lcm\{1/\lam_0,1/\lam_1,1/\lam_2\} = \lcm\{3,3,3\} = 3$. Additionally, $g$ has the same coefficients as $f$. It is easy to see that
\begin{eqnarray*}
	T & = & \left(\begin{array}{cc} 4/3 & 2/3 \\ 2/3 & 4/3 \\ \end{array}\right)
\end{eqnarray*}
and thus
\begin{eqnarray*}
	g & = & 1/3 + 1/3 x_1^3 + 1/3 x_2^3 - x_1 x_2
\end{eqnarray*}
and, by Proposition \ref{Prop:StandardForm} we have $f(e^{\mathbf{w}}) = g(e^{T^t \mathbf{w}})$.

Since the circuit number $\Theta_f$ only depends on the coefficients of $f$ and the convex combination of $y$, it is invariant with respect to transformation to the standard form. Thus, we have
\begin{eqnarray*}
\Theta_f \ = \ \Theta_g  \ = \ \prod_{j = 0}^2 \left(\frac{\lam_j}{b_j}\right)^{\lam_j} \ = \ \left(\frac{1/3}{1/3}\right)^{1/3} \cdot \left(\frac{1/3}{1/3}\right)^{1/3} \cdot \left(\frac{1/3}{1/3}\right)^{1/3} \ = \ 1.
\end{eqnarray*}

Since $y = (2,2) \in (2\N)^2$, by Theorem \ref{Thm:Positiv}, $f \geq 0$ if and only if the inner coefficient $c$ of $f$ satisfies $c \geq -\Theta_f = -1$. But the inner coefficient $c$ of the Motzkin polynomial equals its negative circuit number. Hence, the Motzkin polynomial is contained in the boundary of the cone of nonnegative polynomials.

If $c = -\Theta_f$, then we know by Proposition \ref{Prop:GlobalMinimizer} that $f(e^{\mathbf{w}}) = 0$ at the unique point $\mathbf{s}^*$ with
\begin{eqnarray*}
	1/3 \cdot e^{4s_1^* + 2s_2^*} \ = \ 1/3 \quad \text{ and } \quad 1/3 \cdot e^{2s_1^* + 4s_2^*} \ = \ 1/3.
\end{eqnarray*}
Thus, $\mathbf{s}^* = (0,0)$. Since, by the proof of Theorem \ref{Thm:Positiv}, $f(\mathbf{x}) = 0$ only if $f(|x_1|,|x_2|) = 0$, we can conclude that every affine root $\mathbf{v} \in \R^n$ of the Motzkin polynomial satisfies $|v_j| = 1$.\\

We give a second example where nonnegativity is not a priori known. Let $f = 1/4 + 2 \cdot x_1^2x_2^4 + x_1^4x_2^4 - 2.5 \cdot x_1^2x_2^3$. Again, it is easy to see that $\lam_1 = 1/2$ and $\lam_2 = 1/4$. Hence,
\begin{eqnarray*}
	\Theta_f & = & \left(\frac{b_1}{\lam_1}\right)^{\lam_1} \cdot \left(\frac{b_2}{\lam_2}\right)^{\lam_2} \ = \ (2 \cdot 2)^{1/2} \cdot (1 \cdot 4)^{1/4} \ = \ 2 \cdot \sqrt{2} \ \approx \ 2.828.
\end{eqnarray*}

And since $|c| < \Theta_f$, we can conclude that $f$ is a strictly positive polynomial.

\subsection{A-Discriminants and Gale Duals}
\label{SubSec:GaleDual}

For a given $(n+1) \times m$ support matrix $M^A$ with $A \subset \Z^n$ and $\conv(A)$ being full dimensional, a \textit{Gale dual} or \textit{Gale transformation} is an integral $m \times (m-n-1)$ matrix $M^B$ such that its rows span the $\Z$-kernel of $M^A$. In other words, for every integral vector $\mathbf{v} \in \Z^m$ with $M^A \mathbf{v} = 0$, it holds that $\mathbf{v}$ is an integral linear combination of the rows of $M^B$, see \cite{GKZ:discriminant,Passare:Tsikh:Survey}.

If $A$ is a circuit, then $M^B$ is a vector with $n+2$ entries. It turns out that this vector is closely related to the global minimum $e^{\mathbf{s}^*} \in \R^n$ and the circuit number $\Theta_f$.

\begin{cor}
Let $f = \sum_{j = 0}^n b_j \mathbf{x}^{\alp(j)} + c \mathbf{x}^y$ be a polynomial supported on a circuit $A$ of the Form \eqref{Equ:OurPolynomials}.
Let $e^{\mathbf{s}^*} \in \R^n$ denote the global minimizer and $\Theta_f$ the circuit number. Then the Gale dual $M^B$ of the support matrix $M^A$ is an integral multiple of the vector
\begin{eqnarray*}
	\left(b_0 e^{\langle \mathbf{s}^*,\alp(0) \rangle},\ldots,b_n e^{\langle \mathbf{s}^*,\alp(n) \rangle},-\Theta_f e^{\langle \mathbf{s}^*,y \rangle}\right) \ \in \ \R^{n+2}.
\end{eqnarray*}
\label{Cor:GaleDual}
\end{cor}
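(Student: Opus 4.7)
My plan is to reduce the statement to a direct verification that the proposed vector equals, up to a global rational scalar, the standard kernel vector $(\lambda_0,\ldots,\lambda_n,-1)$ of $M^A$. First I would recall that because $A$ is a circuit, $M^A$ is an $(n+1)\times(n+2)$ matrix of full row rank $n+1$, so $\ker M^A$ is one-dimensional. The convex combination $y=\sum_{j=0}^n \lambda_j\alpha(j)$ together with $\sum_{j=0}^n\lambda_j=1$ says exactly that
\begin{eqnarray*}
M^A\cdot(\lambda_0,\ldots,\lambda_n,-1)^t & = & 0,
\end{eqnarray*}
so the rational vector $(\lambda_0,\ldots,\lambda_n,-1)$ spans $\ker M^A$. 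Clearing the common denominator of the $\lambda_j$ yields a primitive integer generator, and $M^B$ is by definition an integer multiple of this generator; equivalently $M^B$ is an integer multiple of $(\lambda_0,\ldots,\lambda_n,-1)$.

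Next I would identify the proposed vector with $(\lambda_0,\ldots,\lambda_n,-1)$ by a term-by-term computation using the definitions from Section \ref{SubSec:NormMinimizer}. Under the running normalization $\alpha(0)=0$ and $b_0=\lambda_0$, the defining relation $e^{\langle \mathbf{s}^*,\alpha(j)\rangle}=\lambda_j/b_j$ (stated for $1\le j\le n$ and trivially true also for $j=0$) gives $b_je^{\langle \mathbf{s}^*,\alpha(j)\rangle}=\lambda_j$ for every $0\le j\le n$, matching the first $n+1$ coordinates. For the last coordinate I would use $y=\sum_{j=0}^n\lambda_j\alpha(j)$ to compute
\begin{eqnarray*}
\Theta_f\, e^{\langle \mathbf{s}^*,y\rangle}
 \ =\ \Theta_f \prod_{j=0}^n e^{\lambda_j\langle\mathbf{s}^*,\alpha(j)\rangle}
 \ =\ \Theta_f \prod_{j=0}^n\left(\frac{\lambda_j}{b_j}\right)^{\lambda_j}
 \ =\ \Theta_f\cdot\Theta_f^{-1}\ =\ 1,
\end{eqnarray*}
where the last equality is the definition of the circuit number in (\ref{Equ:CircuitNumber}). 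Hence the last entry of the proposed vector is $-1$, so it coincides with $(\lambda_0,\ldots,\lambda_n,-1)$.

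Combining the two steps, the proposed vector spans $\ker M^A$ and agrees with the rational generator of this kernel, so $M^B$ is an integer multiple of it, as claimed.

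There is no serious obstacle in this argument; the content is a bookkeeping exercise that uses every earlier convention at once. The only point that requires care is the transition from the seemingly transcendental quantities $b_je^{\langle\mathbf{s}^*,\alpha(j)\rangle}$ and $\Theta_f e^{\langle\mathbf{s}^*,y\rangle}$ to the rational numbers $\lambda_j$ and $1$: this is precisely the content of Proposition \ref{Prop:GlobalMinimizer} rephrased coordinate-wise, and it is what makes the proposed vector, a priori lying in $\R^{n+2}$, actually lie in $\Q^{n+2}$ with the correct ratios to yield $M^B$ after clearing denominators.
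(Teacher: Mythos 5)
Your proposal is correct and follows essentially the same route as the paper: identify the kernel of $M^A$ as the span of $(\lambda_0,\ldots,\lambda_n,-1)$ and then check coordinate-wise, via $e^{\langle \mathbf{s}^*,\alpha(j)\rangle}=\lambda_j/b_j$ and $e^{\langle \mathbf{s}^*,y\rangle}=\Theta_f^{-1}$, that the displayed vector equals this kernel generator. You merely spell out the computation that the paper compresses into a reference to the proof of Proposition \ref{Prop:GlobalMinimizer}, correctly using the running normalizations $\alpha(0)=0$ and $b_0=\lambda_0$.
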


\begin{proof}
The Gale dual $M^B$ needs to satisfy $M^A (M^B)^t = 0$. Since $A$ is a circuit, $M^B$ spans a one dimensional vector space. From $y = \sum_{j = 0}^n \lam_j \alp(j)$ it follows by construction of $e^{\mathbf{s}^*}$ and $\Theta_f$ (see proof of Proposition \ref{Prop:GlobalMinimizer}) that
\begin{eqnarray*}
	\left(b_0 e^{\langle \mathbf{s}^*,\alp(0) \rangle},\ldots,b_n e^{\langle \mathbf{s}^*,\alp(n) \rangle},-\Theta_f e^{\langle \mathbf{s}^*,y \rangle}\right) & = & (\lam_0,\ldots,\lam_n,-1)
\end{eqnarray*}
and the statement follows by definition of $M^A$ and $y$.
\end{proof}

Furthermore, we point out that the circuit number $\Theta_f$ and the question of nonnegativity is closely related to $A$-discriminants. Let $A = \{\alp(1),\ldots,\alp(d)\} \subset \Z^n$ and let $\C^A$ denote the space of all polynomials $\sum_{j = 1}^d b_j \mathbf{z}^{\alp(j)}$ with $b_j \in \C$. Since every (Laurent-) polynomial in $\C^A$ is uniquely determined by its coefficients, $\C^A$ can be identified with a $\C^d$ space. Let $\nabla_A$ be the Zariski closure of the subset of all polynomials $f$ in $\C^A$ for which there exists a point $\mathbf{z} \in (\C^*)^n$ such that
\begin{eqnarray*}
	f(\mathbf{z}) \ = \ 0 \ \text{ and } \ \frac{\partial f}{\partial z_j}(\mathbf{z}) \ = \ 0 \ \text{ for all } 1 \leq j \leq n.
\end{eqnarray*}
It is well known that $\nabla_A$ is an irreducible $\Q$-variety. If $\nabla_A$ is of codimension 1, then the $A$-\textit{discriminant} $\Delta_A$ is the integral, irreducible monic polynomial in $\C[b_1,\ldots,b_d]$, which has the variety $\nabla_A$, see \cite{GKZ:discriminant}. 

The following statement is an immediate consequence of Proposition \ref{Prop:GlobalMinimizer} and Theorem \ref{Thm:Positiv}. But it was (at least implicitly) already known before and can also be derived from \cite{GKZ:discriminant}, \cite{TTdW:genusone}, and \cite{Nie:Discriminants}.

\begin{cor}
The $A$-discriminant vanishes at a polynomial $f \in P_{\Delta}^y$ if and only if $f \in \partial P_{n,2d}^y$ or, equivalently, if and only if $c \in \{\pm \Theta_f\}$ and $y \notin (2\N)^n$ or $c = -\Theta_f$ and $y \in (2\N)^n$.
\label{Cor:ADiscriminant}
\end{cor}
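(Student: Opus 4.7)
The plan is to deduce this corollary directly from Theorem~\ref{Thm:Positiv} (which supplies the equivalence between boundary membership and the sign-and-magnitude conditions on $c$) combined with Proposition~\ref{Prop:GlobalMinimizer} (which exhibits an explicit singular point of $f$ on the positive real torus precisely when $c = -\Theta_f$). The role of the $A$-discriminant is to encode, algebraically in the coefficients, the existence of a singular point of $f$ in $(\C^*)^n$, so the task is to match this algebraic condition to the real-geometric boundary condition already provided by Theorem~\ref{Thm:Positiv}.

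For the direction $f \in \partial P_{n,2d}^y \Rightarrow \Delta_A(f) = 0$, I would argue as follows. If $c = -\Theta_f$, Proposition~\ref{Prop:GlobalMinimizer} supplies $e^{\mathbf{s}^*} \in \R_{>0}^n \subset (\C^*)^n$ at which $f$ vanishes and attains a global minimum; hence all partial derivatives vanish there as well, $f$ has a critical zero in $(\C^*)^n$, and $\Delta_A(f) = 0$. In the remaining boundary case $c = +\Theta_f$ with $y \notin (2\N)^n$, pick an index $j$ with $y_j$ odd and apply the substitution $\sigma \colon x_j \mapsto -x_j$. Since every vertex exponent $\alpha(i)$ is even, $\sigma$ leaves the vertex monomials invariant but flips the sign of $c \mathbf{x}^y$, reducing the problem to the previous case; pulling the real singular point of $\sigma(f)$ back through $\sigma$ produces a real singular point of $f$, again forcing $\Delta_A(f) = 0$.

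For the converse, I would exploit the circuit structure of $A$. The equations $f(\mathbf{z}) = 0$ together with all $z_k \, \partial f/\partial z_k (\mathbf{z}) = 0$ assemble into a linear system in the monomial values $(b_0 \mathbf{z}^{\alpha(0)}, \ldots, b_n \mathbf{z}^{\alpha(n)}, c \mathbf{z}^y)$ with coefficient matrix $M^A$, whose one-dimensional kernel is spanned by the Gale dual $(\lambda_0, \ldots, \lambda_n, -1)$ computed in Corollary~\ref{Cor:GaleDual}. Writing $\lambda_j = p_j / q$ in lowest terms and substituting back into the monomial relation $\mathbf{z}^{q y} = \prod \mathbf{z}^{p_j \alpha(j)}$ yields the algebraic identity $c^q = (-\Theta_f)^q$, from which one extracts the admissible real solutions and compares them against the parity of $y$ to recover the conditions of the statement.

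The main obstacle I expect is the final parity step in the converse: tracking precisely when the real equation $c^q = (-\Theta_f)^q$ admits the solution $c = +\Theta_f$ in addition to $c = -\Theta_f$, and matching this dichotomy to $y \in (2\N)^n$ versus $y \notin (2\N)^n$ via the convex relation $y = \sum \lambda_j \alpha(j)$ and evenness of the $\alpha(j)$. Since the authors explicitly invoke \cite{GKZ:discriminant, Nie:Discriminants, TTdW:genusone} for this point, I would lean on those sources for the explicit circuit discriminant rather than reconstruct its full sign structure from scratch.
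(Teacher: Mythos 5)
Your forward direction is exactly what the paper intends: Corollary \ref{Cor:ADiscriminant} is presented there as an ``immediate consequence'' of Proposition \ref{Prop:GlobalMinimizer} and Theorem \ref{Thm:Positiv}, and the content of that remark is precisely your observation that for $c=-\Theta_f$ the point $e^{\mathbf{s}^*}\in\R_{>0}^n\subset(\C^*)^n$ is simultaneously a zero and a global minimizer of the nonnegative function $f$, hence a singular point witnessing $\Delta_A(f)=0$, while the case $c=+\Theta_f$, $y\notin(2\N)^n$ reduces to it via $x_j\mapsto -x_j$ on an odd coordinate of $y$. That half is correct and matches the paper.

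The gap is in the converse, and it sits exactly where you flagged it. Eliminating $\mathbf{z}$ from the linear system $M^A\bigl(b_0\mathbf{z}^{\alp(0)},\ldots,b_n\mathbf{z}^{\alp(n)},c\mathbf{z}^y\bigr)^t=0$ does give $c^q=(-\Theta_f)^q$ with $q$ the lcm of the denominators of the $\lam_j$, but the parity of $q$ is \emph{not} equivalent to $y\in(2\N)^n$, and moreover this is only a necessary condition: it does not tell you which of the real roots $\pm\Theta_f$ are actually realized by a point of $(\C^*)^n$. To decide that one must examine the finite group $G=\{\omega\in(\C^*)^n:\omega^{\alp(j)-\alp(0)}=1 \text{ for all } j\}$; the singular points form the orbit $G\cdot e^{\mathbf{s}^*}$ and yield the values $c=-\Theta_f\,\omega^{-y}$. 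The subgroup $\{\pm1\}^n\subseteq G$ (present because the $\alp(j)$ are even) reproduces exactly the dichotomy in the statement, but $G$ can be strictly larger and can contain $\omega$ with $\omega^{y}=-1$ even when $y\in(2\N)^n$. Concretely, for $f=\tfrac34+\tfrac14x^8+cx^2$ (so $y=2$, $\lam=(\tfrac34,\tfrac14)$, $\Theta_f=1$) one checks $f(i)=f'(i)=0$ when $c=+1=+\Theta_f$, so the $A$-discriminant vanishes there although $y$ is even and $f$ is a strictly positive sum of monomial squares off $\partial P_{1,8}^y$. Hence the converse cannot be closed along the route you sketch without an extra hypothesis on the lattice spanned by the $\alp(j)$ (e.g.\ that it equals $(2\Z)^n$) or without restricting to singular points in the positive orthant; this is precisely the step the paper itself does not carry out but delegates to \cite{GKZ:discriminant}, \cite{TTdW:genusone} and \cite{Nie:Discriminants}.
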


\section{Amoebas of Real Polynomials Supported on a Circuit}
\label{Sec:amoebas}

In this section, we investigate amoebas of real polynomials supported on a circuit. We show that for amoebas of polynomials of the Form \eqref{Equ:OurPolynomials}, which are not a sum of monomial squares, a point $\mathbf{w}$ is contained in a bounded component of the complement only if the norm of the ``inner'' monomial is greater than the sum of all ``outer'' monomials at $\mathbf{w} \in \R^n$ (Theorem \ref{Thm:AmoebaSolidness}). This implies particularly that an amoeba of this type has a bounded component in the complement if and only if the ``inner'' coefficient $c$ satisfies $|c| > |\Theta_f|$, which proves the equivalence of (1) and (2) in Theorem \ref{Thm:MainEquivalences}. Furthermore, this result generalizes some statements in \cite{TTdW:genusone}.\\

In this section, we always assume that $f_c$ is a parametric family of a Laurent polynomial of the Form \eqref{Equ:OurPolynomials} with real parameter $c \in \R_{\leq 0}$. Furthermore, we always assume that $f_c$ is given in zero standard form (see Section \ref{Sec:Nonnegative}), i.e.,
\begin{eqnarray}
	f_c & = & \sum_{j = 1}^{n+1} b_j \mathbf{x}^{\alp(j)} + c, \label{Equ:OurPolynomialZeroStandard}
\end{eqnarray}
with $b_1,\ldots,b_{n+1} \in \R_{> 0}$. Let $\mathbf{w} \in \R^n$ be an arbitrary point in the underlying space
of $\cA(f_c)$. As introduced in Section \ref{SubSec:PreliminariesAmoebas}, we denote the fiber with respect to the $\Log|\cdot|$-map as $\F_{\mathbf{w}}$ and the fiber function of $f_c$ at the fiber $\F_{\mathbf{w}}$
as $f_c^{|\exp(\mathbf{w})|}$. We define the following parameters:
\begin{eqnarray*}
	\Theta_{\mathbf{w}} & = & \sum_{j = 1}^{n+1} |b_j e^{\langle \mathbf{w}, \alp(j) \rangle}|, \\
	\Psi_{\mathbf{w}} & = & \max_{1 \leq j \leq n+1} |b_j e^{\langle \mathbf{w}, \alp(j) \rangle}|.
\end{eqnarray*}

The following facts about amoebas supported on a circuit are well-known.

\begin{thm}[Purbhoo, Rullg{\aa}rd, Theobald, de Wolff]
Let $f = \lam_0 + \sum_{j = 1}^n b_j \mathbf{z}^{\alp(j)} + c \mathbf{z}^{y} \in \C[z_1^{\pm 1},\ldots,z_n^{\pm 1}]$ be a Laurent polynomial with $b_j \in \C^*$ and $c \in \C$ such that $\New(f)$ is a simplex and $y \in \Int(\New(f))$.
\begin{enumerate}
	\item The complement of $\cA(f)$ has exactly $n+1$ unbounded and at most one bounded component. If the bounded component $E_y(f)$ exists, then it has order $y$.
	\item $\mathbf{w} \in E_y(f) \subset \R^n$ only if $|c| > \Psi_{\mathbf{w}}$.
	\item $\mathbf{w} \in E_y(f) \subset \R^n$ if $|c| > \Theta_{\mathbf{w}}$.
	\item The complement of $\cA(f)$ has a bounded component if $|c| > \Theta_f$ and the bound is sharp if there exists a point
$\mathbf{\phi}$ on the unit torus $(S^1)^n \subset (\C^*)^n$ such that the fiber function $f^{1}$ satisfies $f^{1}(\mathbf{\phi}) = e^{i \psi} \cdot (\sum_{j = 0}^n |b_{\alp(j)}| - |c|)$ for some $\psi \in [0,2\pi)$.
\end{enumerate}
\label{Thm:MainGenus1}
\end{thm}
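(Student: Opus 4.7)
The plan is to handle the four statements in the order (1), (3), (4), (2), with (3) underlying both (4) and the hardest claim (2).

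For (1), the set of connected components of $\R^n\setminus\cA(f)$ injects into $\New(f)\cap\Z^n$ via the order map of Forsberg--Passare--Tsikh. The $n+1$ vertices of the simplex $\New(f)$ each give rise to exactly one unbounded component (with that vertex as order), so any bounded component must have its order in $\Int(\New(f))\cap\Z^n$. A fiber--function dominance argument (equivalently, the genus-one analysis in \cite{TTdW:genusone}) shows that this order must be a support point of $f$, forcing it to be $y$. Injectivity of the order map then yields at most one bounded component.

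For (3), I would work in the zero standard form~\eqref{Equ:OurPolynomialZeroStandard} so that the exponent of $c$ is the origin. If $|c|>\Theta_{\mathbf{w}}$, then by the triangle inequality the fiber function $f_c^{|\exp(\mathbf{w})|}(\phi) = c+\sum_{j=1}^{n+1} b_j e^{\langle \mathbf{w},\alp(j)\rangle}e^{i\langle \alp(j),\phi\rangle}$ has modulus at least $|c|-\Theta_{\mathbf{w}}>0$ for every $\phi\in(S^1)^n$, so $\mathbf{w}\notin\cA(f)$ by~\eqref{Equ:FiberFunction2}. Strict dominance of the constant term then lets one apply the Passare--Rullg{\aa}rd integral formula for the order of a complement component to obtain $\ord(\mathbf{w})=0=y$, placing $\mathbf{w}\in E_y(f)$. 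For~(4), the weighted AM--GM inequality $\Theta_{\mathbf{w}} = \sum_{j}\lam_j\cdot (b_j/\lam_j)e^{\langle \mathbf{w},\alp(j)\rangle} \ge \Theta_f\cdot e^{\langle\mathbf{w},\sum\lam_j\alp(j)\rangle}=\Theta_f$ identifies $\Theta_f$ with $\min_{\mathbf{w}}\Theta_{\mathbf{w}}$, attained at the norm minimizer $\mathbf{s}^*$ of Proposition~\ref{Prop:GlobalMinimizer}. Hence $|c|>\Theta_f$ implies $|c|>\Theta_{\mathbf{s}^*}$, and part~(3) applied at $\mathbf{w}=\mathbf{s}^*$ yields $\mathbf{s}^*\in E_y(f)$. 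Sharpness of the threshold $|c|=\Theta_f$ is exactly the question of whether one can align the phases $e^{i\langle\alp(j),\phi\rangle}$ at $\mathbf{w}=\mathbf{s}^*$ so that the outer monomials oppose $c$ in $\C$; the stated alignment condition $f^1(\phi)=e^{i\psi}(\sum |b_{\alp(j)}|-|c|)$ is exactly this worst-case arrangement of the triangle inequality.

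For~(2), I would argue by a deformation. Assuming $\mathbf{w}\in E_y(f)$ but $|c|\le |b_k e^{\langle\mathbf{w},\alp(k)\rangle}|$ for some~$k$, translate $\mathbf{w}$ along a ray in a direction that makes the $k$-th outer monomial strictly dominant at infinity. At the far end of the ray one lies in the unbounded complement component of order $\alp(k)$, while along the ray the weak dominance $|b_k e^{\langle\mathbf{w}',\alp(k)\rangle}|\ge |c|$ is preserved and a term-by-term fiber--function analysis rules out crossing $\cA(f)$. Reaching an unbounded component contradicts boundedness of $E_y(f)$. The main obstacle is precisely this last step: the inequality $|c|\le \Psi_{\mathbf{w}}$ does not by itself force $b_k\mathbf{z}^{\alp(k)}$ to dominate the sum of the remaining outer terms plus~$c$, so one must choose the ray carefully and exploit the convexity of the complement component to keep the dominance estimates on the fiber function tight throughout the homotopy.
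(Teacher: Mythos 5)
A preliminary remark: the paper does not prove Theorem \ref{Thm:MainGenus1} at all. It is stated as a compilation of known results, with parts (1) and (2) attributed to a tropical-geometry theorem of Rullg{\aa}rd as specialized to circuits in \cite{TTdW:genusone}, part (3) to Purbhoo's lopsidedness criterion, and part (4) to \cite[Theorem 4.4]{TTdW:genusone}. So your attempt is being measured against the cited sources rather than an in-paper argument. Against that benchmark, your proofs of (3) and of the existence half of (4) are correct and are essentially the standard ones: the triangle-inequality bound $|f_c^{|\exp(\mathbf{w})|}|\ge |c|-\Theta_{\mathbf{w}}>0$ shows $\mathbf{w}\notin\cA(f)$, the Forsberg--Passare--Tsikh order integral evaluates to $y$ when the inner term dominates, and the weighted AM--GM identity $\min_{\mathbf{w}}\Theta_{\mathbf{w}}=\Theta_{\mathbf{s}^*}=\Theta_f$ turns (3) into (4). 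For (1) and for the sharpness claim in (4) you defer to \cite{TTdW:genusone}, exactly as the paper does; note only that the assertion that the order of a bounded component must lie in the \emph{support} (and not merely in $\New(f)\cap\Z^n$) is itself the nontrivial content of Rullg{\aa}rd's theorem, not something a ``fiber-function dominance argument'' yields for free.

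The genuine gap is part (2), and you have correctly diagnosed it yourself. The ray-deformation argument cannot be repaired in the form you describe. If $\mathbf{w}\in E_y(f)$ and you move along a ray into the normal cone of the vertex $\alp(k)$, you will indeed eventually land in the unbounded component $E_{\alp(k)}(f)$; but since $E_y(f)$ is bounded, the ray must first leave $E_y(f)$, and any path from one complement component to a different one \emph{must} cross $\cA(f)$ --- the components are by definition separated by the amoeba. So ``the ray never crosses $\cA(f)$'' is not merely hard to establish from $|c|\le\Psi_{\mathbf{w}}$ at the starting point; it is false. Part (2) really is the tropical half of the story: one needs Rullg{\aa}rd's containment $E_\alp(f)\subseteq\{\mathbf{w}: \log|b_\alp|+\langle\mathbf{w},\alp\rangle>\log|b_\beta|+\langle\mathbf{w},\beta\rangle \text{ for all }\beta\neq\alp\}$, which in zero standard form specializes to $E_y(f)\subseteq\{\mathbf{w}:|c|>\Psi_{\mathbf{w}}\}$. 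Its proof goes through the Ronkin function (affine with gradient $\alp$ on $E_\alp(f)$, and majorizing every tropical monomial of $f$), i.e., through global potential-theoretic information rather than a pointwise or homotopy estimate on fibers. Without importing that result, your part (2) remains unproven.
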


Part (1) and (2) are consequences of a Theorem by Rullg{\aa}rd based on tropical geometry, which was applied to the circuit case by Theobald and the second author, see \cite[Lemma 2.1]{TTdW:genusone} and also \cite[Theorem 4.1]{deWolff:Diss}. Part (3) is an immediate consequence of Purbhoo's lopsidedness condition (also referred as generalized Pellet's Theorem), see \cite{Purbhoo}. Part (4) is \cite[Theorem 4.4]{TTdW:genusone} after investigating $f$ in the standard form introduced in Section \ref{Sec:Nonnegative}, which guarantees that the bound given in \cite[Theorem 4.4]{TTdW:genusone} coincides with the circuit number $\Theta_f$. Note that this means $\Theta_f = \min_{\mathbf{w} \in \R^n} \Theta_{\mathbf{w}}$. Similarly, we define $\Psi_{f} = \min_{\mathbf{w} \in \R^n} \Psi_{\mathbf{w}}$. We remark that $\Psi_f$ is the minimal choice for $|c|$ such that the tropical hypersurface $\cT(\trop(f))$ of the tropical polynomial $\trop(f) = \bigoplus_{j = 1}^{n+1} \log|b_j| \odot \mathbf{x}^{\alp(j)} \oplus \log|c|$ has genus one, see \cite{deWolff:Diss,TTdW:genusone} for details; for an introduction to tropical geometry see \cite{MacLagan:Sturmfels}.\\

Summarized, Theorem \ref{Thm:MainGenus1} yields that the complement of an amoeba $\cA(f)$ of a real polynomial $f \in P_{\Delta}^y$ has a bounded component for all choices of $c < -\Theta_f$, the complement of $\cA(f)$ has no bounded component for $c \in [-\Psi_{f},0]$, and the situation is unclear for $c \in (-\Theta_f,-\Psi_f)$, see Figure \ref{Fig:Genus}. Hence, our goal in this section is to show the following theorem. 
\begin{thm}
Let $f_c$ be of the Form \eqref{Equ:OurPolynomialZeroStandard} such that $b_1,\ldots,b_{n+1} \in \R_{> 0}$ and $\mathbf{w} \in \R^n$. Then $\mathbf{w} \in \cA(f_c)$ for every real $c \in [-\Theta_{\mathbf{w}},-\Psi_{\mathbf{w}}]$.
\label{Thm:AmoebaSolidness}
\end{thm}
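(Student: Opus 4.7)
The plan is to reformulate the statement via the fiber function and then to locate the required zero on the torus. By \eqref{Equ:FiberFunction2}, $\mathbf{w} \in \cA(f_c)$ if and only if the fiber function $f_c^{|e^{\mathbf{w}}|}(\phi) = \sum_{j = 1}^{n+1} r_j \cdot e^{i \langle \alp(j), \phi \rangle} + c$, with $r_j := b_j \cdot e^{\langle \mathbf{w}, \alp(j) \rangle} > 0$, has a zero on $(S^1)^n$. Setting $g(\phi) := \sum_{j = 1}^{n+1} r_j \cdot e^{i \langle \alp(j), \phi \rangle}$, this is equivalent to requiring that $-c$ lies in the image $g((S^1)^n) \subseteq \C$. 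Since $-c \in [\Psi_{\mathbf{w}}, \Theta_{\mathbf{w}}] \subset \R_{> 0}$ and $g(0) = \sum_j r_j = \Theta_{\mathbf{w}}$ already realises the upper endpoint, the task reduces to showing that the compact, connected image $g((S^1)^n)$ contains the real segment $[\Psi_{\mathbf{w}}, \Theta_{\mathbf{w}}]$.

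To produce this inclusion I would exhibit a connected path in $(S^1)^n$ emanating from $\phi = 0$ along which $g$ takes only real values and attains at least one value no larger than $\Psi_{\mathbf{w}}$; the intermediate value theorem applied to the restriction of $g$ to such a path then fills in the whole segment. Let $j_0$ be an index with $r_{j_0} = \Psi_{\mathbf{w}}$. The circuit relation $\sum_{j=1}^{n+1} \lam_j \alp(j) = 0$ built into the zero standard form, together with the fact that any $n$ of the $\alp(j)$'s are linearly independent, is the structural input that makes such a path available: restrict $\phi$ to the hyperplane $\{\langle \alp(j_0), \phi \rangle = 0\}$, so that the $j_0$-th term of $g$ is pinned at $r_{j_0} = \Psi_{\mathbf{w}}$ on the positive real axis, and within this hyperplane rotate the remaining $n$ phases so as to keep $\IM \sum_{j \neq j_0} r_j e^{i \langle \alp(j), \phi \rangle}$ equal to zero while driving $\RE \sum_{j \neq j_0} r_j e^{i \langle \alp(j), \phi \rangle}$ from its initial value $\Theta_{\mathbf{w}} - \Psi_{\mathbf{w}}$ down to zero. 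The $n-1$ degrees of freedom remaining inside the hyperplane are enough for $n \geq 2$ to balance the single real constraint $\IM g = 0$.

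The principal obstacle is turning this dimension-count heuristic into a genuine continuous path. Near $\phi = 0$ the locus $V := \{\phi \in (S^1)^n : \IM g(\phi) = 0\}$ is cut out by the real-analytic equation $\sum_j r_j \sin \langle \alp(j), \phi \rangle = 0$ and so is a smooth hypersurface of dimension $n - 1 \geq 1$. The plan is to follow the connected component $V_0$ of $V$ containing $\phi = 0$ and to show, using the positivity of all $r_j$'s and $\lam_j$'s together with the circuit relation, that the restriction $g|_{V_0}$ cannot be bounded below by any value strictly larger than $\Psi_{\mathbf{w}}$; compactness of $(S^1)^n$ then yields a point $\phi^* \in V_0$ with $g(\phi^*) \leq \Psi_{\mathbf{w}}$. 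Once such $\phi^*$ is in hand, connectedness of $V_0$ and continuity of $g|_{V_0}$, together with the intermediate value theorem, produce every real value between $g(\phi^*) \leq \Psi_{\mathbf{w}}$ and $g(0) = \Theta_{\mathbf{w}}$, covering all of $[\Psi_{\mathbf{w}}, \Theta_{\mathbf{w}}]$ as required.
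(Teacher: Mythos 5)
Your reduction to the fiber function is correct and matches the paper's setup: one must show that the image of $g(\phi) = \sum_{j=1}^{n+1} r_j e^{i\langle \alp(j),\phi\rangle}$ on $(S^1)^n$ contains the real segment $[\Psi_{\mathbf{w}},\Theta_{\mathbf{w}}]$, and $\phi = 0$ realises the upper endpoint. But the entire content of the theorem lies in the step you defer, and the argument you sketch for it does not work. First, the dimension count fails precisely in the hardest case: for $n=2$, restricting to the sub-torus $\{\langle\alp(j_0),\phi\rangle = 0\}$ leaves one degree of freedom, and imposing the single real constraint $\IM g = 0$ on it generically leaves a zero-dimensional set, so there is no path along which to drive $\RE\sum_{j\neq j_0} r_j e^{i\langle\alp(j),\phi\rangle}$ down to zero. (This is exactly why the paper's proof splits into cases: Lemma \ref{Lem:DimensionTwoSuffices} disposes of $n\geq 4$ by a hand-built path, while $n\in\{2,3\}$ require the separate and much more delicate homotopy-of-hypotrochoids argument of Lemma \ref{Lem:DimensionTwo}.) Second, your fallback — follow the connected component $V_0$ of $\{\IM g = 0\}$ through $\phi=0$ and claim $g|_{V_0}$ cannot be bounded below by anything above $\Psi_{\mathbf{w}}$ — is a restatement of what must be proved, not a proof. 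The set $\{\IM g = 0\}$ certainly contains points where $g$ is small, but nothing you say rules out that those points lie in a \emph{different} connected component from the one containing $\phi = 0$; controlling exactly this connectivity is what the paper's homotopy argument (tracking how the real intersection points $\mu_j$, $\nu_j$ of a family of hypotrochoids move, and distinguishing the transversal from the non-transversal case) is designed to do.

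A smaller technical point: your smoothness claim for $V$ near $\phi = 0$ can fail in an important case. The gradient of $\IM g$ at $0$ is $\sum_j r_j\alp(j)$, and at the norm minimizer $\mathbf{w} = \mathbf{s}^*$ one has $r_j$ proportional to $\lam_j$, so the circuit relation $\sum_j \lam_j\alp(j) = 0$ forces this gradient to vanish; $\phi = 0$ is then a singular point of $V$, and the "smooth hypersurface of dimension $n-1$" description breaks down exactly where $\Theta_{\mathbf{w}} = \Theta_f$. In summary, the skeleton of your approach is sound and parallel to the paper's, but the key lemma — that the real values attained by $g$ along a connected set through $\phi = 0$ sweep all the way down to $\Psi_{\mathbf{w}}$ — is left unproved, and for $n = 2,3$ it genuinely requires a new idea beyond counting dimensions.
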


\begin{figure}
\ifpictures
$$
\includegraphics[width=0.4\linewidth]{./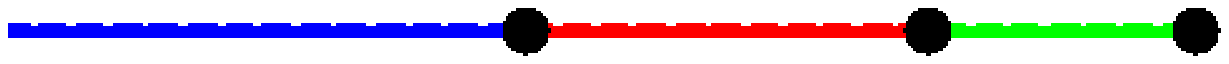}
\put(-7,0){\small{\mbox{$0$}}}
\put(-58,0){\small{\mbox{$-\Psi_f$}}}
\put(-118,0){\small{\mbox{$-\Theta_f$}}}
$$
\fi
\caption{Existence of a bounded component in the complement in dependence of the choice of the ``inner'' coefficient. If $c$ is contained in the left (blue) interval, then the complement of $\cA(f_c)$ has a bounded component. If $c$ is contained in the right (green) interval, then $\cA(f_c)$ is solid. But if $c$ is contained in the middle (red) interval, then it is in general unclear, whether the complement of $\cA(f_c)$ has a bounded component or not.}

\label{Fig:Genus}
\end{figure}

Note that for real polynomials $f_c \in P_{\Delta}^y$ we have $\cA(f_c) = \cA(f_{-c})$ if and only if $y \notin (2\N)^n$, since, if $y_j$ is odd and some $\textbf{w} \notin \cA(f_c)$, then $f_c(\mathbf{z}) \neq 0$ for all $\mathbf{z}$ contained in the fiber torus $\F_{\mathbf{w}} = \{\mathbf{z} : \Log|\mathbf{z}| = \mathbf{w}\}$. On the one hand, this torus is invariant under the variable transformation $z_j \mapsto -z_j$. On the other hand, this transformation transforms $f_c$ to $f_{-c}$. Therefore, Theorem \ref{Thm:AmoebaSolidness} implies particularly the following corollary, which is literally the equivalence between Part (1) and (2) in our main Theorem \ref{Thm:MainEquivalences}.   

\begin{cor}
Let $f_c$ be a polynomial in $P_{\Delta}^y$ such that $f$ is not a sum of monomial squares. Then $\cA(f_c)$ is solid if and only if $|c| \in [0,\Theta_f]$.
\label{Cor:AmoebaSolidness}
\end{cor}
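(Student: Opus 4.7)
The plan is to reduce the corollary to Theorem \ref{Thm:AmoebaSolidness} together with the qualitative information in Theorem \ref{Thm:MainGenus1}. First I would normalize so that $c\le 0$. Since $f_c$ is not a sum of monomial squares, the hypothesis rules out the case $y\in(2\N)^n$ with $c\ge 0$; in the remaining case $y\notin(2\N)^n$, the identity $\cA(f_c)=\cA(f_{-c})$ recalled just before the corollary (arising from the fiber-torus-invariant change of variables $z_j\mapsto -z_j$ for an odd coordinate $y_j$) lets me assume $c\le 0$ without loss of generality. If $c=0$ appears as a limiting convention, then $|c|=0\le\Psi_{\mathbf{w}}$ for every $\mathbf{w}\in\R^n$, so Theorem \ref{Thm:MainGenus1}(2) directly forbids any bounded complement component.

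Next I would dispose of the easy implication: if $|c|>\Theta_f$, Theorem \ref{Thm:MainGenus1}(4) already exhibits a bounded component in the complement of $\cA(f_c)$, so $\cA(f_c)$ is not solid. This leaves the substantive direction $|c|\in(0,\Theta_f]$ with $c<0$. Suppose for contradiction that the complement of $\cA(f_c)$ contains a bounded component. By Theorem \ref{Thm:MainGenus1}(1) this component equals $E_y(f_c)$, so I can pick some $\mathbf{w}\in E_y(f_c)$. Theorem \ref{Thm:MainGenus1}(2) then forces $|c|>\Psi_{\mathbf{w}}$. On the other hand, because $\Theta_f = \min_{\mathbf{v}\in\R^n}\Theta_{\mathbf{v}}$, the assumption $|c|\le\Theta_f$ yields $|c|\le\Theta_{\mathbf{w}}$ as well. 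Translating to signs, $c\in[-\Theta_{\mathbf{w}},-\Psi_{\mathbf{w}})$. Now Theorem \ref{Thm:AmoebaSolidness} applies to precisely this interval and concludes $\mathbf{w}\in\cA(f_c)$, contradicting $\mathbf{w}\in E_y(f_c)\subseteq\R^n\setminus\cA(f_c)$. Hence no such $\mathbf{w}$ exists and $\cA(f_c)$ is solid.

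The only nontrivial work is hidden inside Theorem \ref{Thm:AmoebaSolidness}; once that is granted, the corollary is a matching of intervals. The main point of care is the reduction to $c\le 0$ via the symmetry $\cA(f_c)=\cA(f_{-c})$ and the exclusion of sums of monomial squares, since Theorem \ref{Thm:AmoebaSolidness} is only formulated for negative inner coefficients. Everything else is bookkeeping: combining the necessary condition $|c|>\Psi_{\mathbf{w}}$ from Theorem \ref{Thm:MainGenus1}(2) with the global bound $|c|\le\Theta_f\le\Theta_{\mathbf{w}}$ places the inner coefficient in exactly the range where Theorem \ref{Thm:AmoebaSolidness} gives $\mathbf{w}\in\cA(f_c)$, shutting down the hypothetical bounded component pointwise.
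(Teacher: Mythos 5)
Your proposal is correct and follows essentially the same route as the paper: the published proof also combines Theorem \ref{Thm:MainGenus1} (2) and (4), Theorem \ref{Thm:AmoebaSolidness} together with the note on the symmetry $\cA(f_c)=\cA(f_{-c})$, and the identity $\Theta_f=\min_{\mathbf{w}\in\R^n}\Theta_{\mathbf{w}}$. You have merely spelled out the interval-matching argument that the paper declares immediate.
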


\begin{proof}
The corollary follows immediately from Theorem \ref{Thm:MainGenus1} (2) and (4), Theorem \ref{Thm:AmoebaSolidness} (including its consecutive note) and the fact that $\Theta_f = \Theta_{\mathbf{s}^*} = \min_{\mathbf{w} \in \R^n} \Theta_{\mathbf{w}}$ by Corollary \ref{Cor:GlobalMinimizer}.
\end{proof}

The proof of Theorem \ref{Thm:AmoebaSolidness} will be quite a lot of work. We need to show a couple of technical statements before we can tackle the actual proof. The first lemma which we need was similarly used in \cite[Theorem 4.1]{TTdW:genusone}.

\begin{lemma}
Let $g: S^1 \to \C, \phi \mapsto b_1 e^{i r \phi} + b_2 e^{i \cdot (\eta + s \phi)}$ for some $b_1, b_2 \in \C^*$ with $|b_1| \geq |b_2|$, $\eta \in [0,2\pi)$ and $r,s \in \N^*$. Then there exist some $\phi, \phi' \in [0,2\pi)$ such that $g(\phi) \in \R_{\geq 0}$ and $g(\phi') \in \R_{\leq 0}$.
\label{Lem:Rouche}
\end{lemma}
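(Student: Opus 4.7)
The plan is to reduce the statement to a winding-number argument on the closed curve $\phi \mapsto g(\phi)$. Specifically, once we know that the induced map $\phi \mapsto g(\phi)/|g(\phi)|$ is a well-defined continuous map $S^1 \to S^1$ of nonzero degree, it must be surjective (otherwise it factors through a contractible arc and is null-homotopic), so in particular both $+1$ and $-1$ are attained, giving $\phi$ with $g(\phi) \in \R_{>0}$ and $\phi'$ with $g(\phi') \in \R_{<0}$.

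First I would treat the strict case $|b_1| > |b_2|$. Here
$$|b_1 e^{ir\phi}| \ = \ |b_1| \ > \ |b_2| \ = \ |b_2 e^{i(\eta + s\phi)}|$$
for every $\phi$, so by the triangle inequality $g(\phi) \neq 0$ throughout $S^1$. The linear homotopy $g_t(\phi) = b_1 e^{ir\phi} + t b_2 e^{i(\eta + s\phi)}$, $t \in [0,1]$, satisfies $|g_t(\phi)| \geq |b_1| - t|b_2| > 0$, hence stays in $\C^*$. Consequently $\phi \mapsto g(\phi)/|g(\phi)|$ is homotopic as a map $S^1 \to S^1$ to $\phi \mapsto b_1 e^{ir\phi}/|b_1|$, whose topological degree equals $r \geq 1$. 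By the surjectivity remark above, both $+1$ and $-1$ lie in the image, producing $\phi,\phi' \in [0,2\pi)$ with $g(\phi) \in \R_{>0}$ and $g(\phi') \in \R_{<0}$.

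The boundary case $|b_1| = |b_2|$ follows by approximation. For $\eps \in (0,1)$ set $g_\eps(\phi) = b_1 e^{ir\phi} + (1-\eps) b_2 e^{i(\eta + s\phi)}$, so that $|b_1| > (1-\eps)|b_2|$ and the first case applies: there exist $\phi_\eps, \phi'_\eps \in [0,2\pi]$ with $g_\eps(\phi_\eps) \geq 0$ and $g_\eps(\phi'_\eps) \leq 0$. By compactness of $[0,2\pi]$ we may extract convergent subsequences $\phi_{\eps_k} \to \phi$ and $\phi'_{\eps_k} \to \phi'$. Since $g_\eps \to g$ uniformly on $S^1$ as $\eps \to 0$, continuity gives $g(\phi) \in \R_{\geq 0}$ and $g(\phi') \in \R_{\leq 0}$, as required.

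The one point that genuinely has to be checked is the degree computation: a priori $g/|g|$ could have winding number anywhere between $-\max(r,s)$ and $\max(r,s)$, and we must be sure it is positive. The Rouché-type homotopy supplied by the strict inequality $|b_1|>|b_2|$ is exactly what pins the degree down to $r$; everything else is soft topology and a compactness argument. The equality case costs nothing beyond continuity because we only ask for nonstrict inequalities in the conclusion.
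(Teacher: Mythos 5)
Your proof is correct and follows essentially the same route as the paper's: a Rouch\'e-type homotopy from $g$ to $b_1e^{ir\phi}$ pins down a nonzero winding number about the origin, which forces the trajectory to meet both the positive and negative real axis, and the case $|b_1|=|b_2|$ is recovered by continuity. Your version merely makes explicit the degree computation and the compactness argument that the paper leaves implicit.
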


For convenience, we provide the proof again. It is mainly based on the Rouch\'{e} theorem. Recall that the winding number of a closed curve $\gamma$ in the complex plane around a point $z$ is given by $\frac{1}{2\pi i} \int_{\gamma} \frac{d \zeta}{\zeta - z}$.

\begin{proof}
Assume $|b_1| > |b_2|$. Clearly, the function $b_1 \cdot e^{i \cdot r \phi}$ has a non-zero winding number around the origin. If $g$ would have a winding number of zero around the origin, then there would exist some $t \in (0,1)$ such that $h(\phi) = b_1 \cdot e^{i \cdot r \phi} + t \cdot b_2 \cdot e^{i \cdot (\eta + s \phi)}$ has a zero $\phi$ outside the origin. This is a contradiction. Hence, the trajectory of $g$ needs to intersect the real line in the strict positive part as well as in the strict negative part.

Since $g$ is continuous in the norms of its coefficients, the statement can be extended to $|b_1| = |b_2|$ and intersections of $g$ with the nonnegative part as well as the nonpositive part of the real axis.
\end{proof}

Now, we step over to complex functions on the real $n$-torus $(S^1)^n$.

\begin{lemma}
Let $g: (S^1)^n \to \C, \phi \mapsto \sum_{j = 1}^n b_j \cdot e^{i \phi_j} + b_{n+1} \cdot e^{-i \sum_{j = 1}^n \lam_j \phi_j}$ with $b_1 \geq \ldots \geq b_{n+1} \in \R_{> 0}$ and $\lam_j \in \Q$. There exists a path $\gamma: [0,1] \to (S^1)^n$ such that $g(\gamma) \in \R$, $g(\gamma(0)) = \sum_{j = 0}^n b_j$ and $g(\gamma(1)) \leq b_1 + b_n + b_{n+1} - \sum_{j = 2}^{n-1} b_j$.
\label{Lem:DimensionTwoSuffices}
\end{lemma}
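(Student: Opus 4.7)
The plan is induction on $n$. The base case $n=2$ is immediate: the sum $\sum_{j=2}^{n-1} b_j$ is empty, $g(0)=b_1+b_2+b_3$ already coincides with the required bound, and the constant path $\gamma\equiv 0$ works.

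For the inductive step I will build $\gamma$ as a concatenation of $n-2$ sub-paths, one per middle index $k\in\{2,\ldots,n-1\}$. Sub-path $k$ starts at a configuration $\phi^{(k-1)}\in(S^1)^n$ at which $g$ is real and the monomials indexed by $2,\ldots,k-1$ have already been rotated to $-b_j$. On sub-path $k$ I freeze every coordinate except $\phi_k$ and one auxiliary coordinate (say $\phi_1$, the choice possibly depending on the stage), and parametrize the two remaining coordinates by a single real parameter via an affine relation $\phi_1=\alpha\phi_k+\beta$ chosen so that the $b_{n+1}$-phase and the constraint $\im g=0$ are compatible. Along this line in $(S^1)^2$ the restriction of $g$ has the exact form $b_k e^{i\phi_k}+b_{n+1}e^{i(\eta+s\phi_k)}+(\text{real constant})$ covered by Lemma~\ref{Lem:Rouche}. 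Since $b_k\geq b_{n+1}$, that lemma produces a point along the line at which the non-constant part is a nonpositive real number; adjusting the slope/intercept of the relation slightly, one can arrange the endpoint to have $\phi_k=\pi$ exactly, so that the $k$-th term evaluates to $-b_k$ while the $b_{n+1}$-term is absorbed into the updated outer configuration.

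After all $n-2$ sub-paths the middle monomials have been rotated to $-b_k$ and therefore contribute exactly $-\sum_{j=2}^{n-1}b_j$. The three surviving terms $b_1e^{i\phi_1}$, $b_n e^{i\phi_n}$, $b_{n+1}e^{-i\langle\lambda,\phi\rangle}$ have moduli at most $b_1$, $b_n$, $b_{n+1}$ respectively, hence joint real contribution $\leq b_1+b_n+b_{n+1}$. Combined with $g(\gamma(1))\in\R$, this yields the stated bound.

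The main obstacle is to coordinate the affine relations across the successive stages so that (i) $g$ remains real along each entire sub-path, not just at its endpoints, and (ii) the cumulative motion of the auxiliary coordinate and of the $b_{n+1}$-term does not violate the modulus estimates used above. The cleanest route around this is to bypass pointwise implicit-function reasoning and instead invoke a global winding-number argument of the kind used in the proof of Lemma~\ref{Lem:Rouche}: at each stage, interpret the real-locus constraint as tracing a level set of $\im g$ on the two-coordinate slice and apply Rouch\'e directly to produce the required arc in a single step. This reduction to a genuinely two-dimensional problem is exactly what the name of the lemma reflects.
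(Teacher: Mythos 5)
Your high-level plan (rotate each middle coordinate $\phi_k$ from $0$ to $\pi$ in successive stages so that the $k$-th term becomes $-b_k$, keeping $g$ real throughout and finishing with the triangle-inequality estimate on the three surviving terms) is the right one and matches the paper's. But the key technical step does not go through as written. If you freeze everything except $\phi_k$ and $\phi_1$ and impose an affine relation $\phi_1=\alpha\phi_k+\beta$, the restriction of $g$ is
\begin{equation*}
b_1 e^{i(\alpha\phi_k+\beta)} \;+\; b_k e^{i\phi_k} \;+\; b_{n+1}e^{i(\eta+s\phi_k)} \;+\; (\text{frozen terms}),
\end{equation*}
which has \emph{three} varying terms, not two: the $b_1$-term is neither constant nor real unless $\alpha=0$, in which case $\phi_1$ is frozen and you have lost your only compensating degree of freedom. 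So the claimed reduction to Lemma \ref{Lem:Rouche} fails, and --- as you yourself note --- the requirement that $g(\gamma)\in\R$ along the \emph{entire} sub-path (which is what the lemma is needed for downstream, via the intermediate value theorem) is exactly the point left unresolved. The closing suggestion to ``apply Rouch\'e directly to produce the required arc'' is not an argument: Lemma \ref{Lem:Rouche} only produces isolated angles at which a two-term sum is real, not a connected arc on which a three-term sum stays real.

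The paper's fix is to use \emph{three} free coordinates per stage and to decouple the reality constraint into two independent two-term problems, each with a dominant partner. In stage $j$ one drives $\phi_j$ from $0$ to $\pi$; for each value of $\phi_j$ one picks $\phi_1\in[-\pi/2,0]$ with $\IM\bigl(b_1e^{i\phi_1}+b_je^{i\phi_j}\bigr)=0$ (possible since $b_1\ge b_j$), and then, with $(\phi_1,\phi_j)$ fixed, one picks $\phi_n$ via Lemma \ref{Lem:Rouche} (with $\eta=-\sum_{k=1}^{n-1}\lam_k\phi_k$) so that $\IM\bigl(b_ne^{i\phi_n}+b_{n+1}e^{-i\sum_{k}\lam_k\phi_k}\bigr)=0$ (possible since $b_n\ge b_{n+1}$). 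This keeps $g$ real pointwise along the whole segment, and at the stage endpoint $\phi_j=\pi$ forces $\phi_1=0$, which is what makes the concatenation of stages consistent --- another point your sketch does not address. If you insist on a two-coordinate slice you would instead have to prove that the connected component of $\{\IM g=0\}$ through the starting point actually reaches $\{\phi_k=\pi\}$ with the right real part, which is essentially the hard content the paper's three-coordinate construction is designed to avoid.
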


\begin{proof}
We set $\gamma(0) = \mathbf{0} \in (S^1)^n$. We construct $\gamma: [0,1] \to (S^1)^n$ piecewise on intervals $[k_{j-1},k_j] \subset [0,1]$ for every $j \in \{2,\ldots,n-1\}$ with $k_1 = 0$ and $k_{n-1} = 1$. In every interval $[k_{j-1},k_j]$ we only vary $\phi_1,\phi_j$ and $\phi_n$ and leave all other $\phi_r$ invariant. I.e., in every interval $[k_{j-1},k_j]$ we only change the first, $j$-th, $n$-th and $(n+1)$-st term.  

In the interval $[k_{j-1},k_j]$, we continuously increase $\phi_j$ from $0$ to $\pi$. For every $\phi_j \in [0,\pi]$ there exists $\phi_1 \in [-\pi/2,0]$ such that $\IM(b_1 e^{i \phi_1} + b_j e^{i \phi_j}) = 0$, since $|b_1| \geq |b_j|$. For every pair $(\phi_1,\phi_j) \in [-\pi/2,0] \times [0,\pi]$, we find, by Lemma \ref{Lem:Rouche}, a $\phi_n$ such that $\IM(b_n e^{i \phi_n} + b_{n+1} e^{-i \sum_{j = 1}^n \lam_j \phi_j}) = 0$ by setting $\eta = -\sum_{j = 1}^{n-1} \lam_j \phi_j$ in Lemma \ref{Lem:Rouche}. Hence, for every $l \in [k_j,k_{j+1}]$ we have $g(\gamma(l)) \in \R$. And since $g$ is a smooth function, we obtain a smooth path segment in $(S^1)^n$ with smooth real image under $g$.

At the endpoint $\gamma(k_j)$ of the path segment $[k_{j-1},k_j] \subset [0,1]$, we are therefore in the situation $g(\gamma(k_j)) \leq |b_1| + |b_{j+1}| + \cdots + |b_{n-1}| + \RE(b_n) + \RE(b_{n+1}) - \sum_{l = 2}^{j} |b_l|$. We can glue together different path segments, since for each $\gamma(k_j)$ we have $\phi_1 = 0$ by construction and the value of $\phi_n$ does not matter. Thus, we can subsequently repeat the procedure for all $j$ until we reach $j = n-1$ and obtain a complete path $\gamma \subset (S^1)^n$ with the desired properties.
\end{proof}

For the next step of the proof we need to recall the definition of a hypotrochoid. A \textit{hypotrochoid} with parameters $R,r \in \Q_{> 0}$, $d \in \R_{> 0}$ satisfying $R \geq r$ is the plane algebraic curve $\gamma$ in $\R^2 \cong \C$ given by
\begin{eqnarray}
	\gamma: [0,2\pi) \ra \C, \quad \phi \mapsto (R - r) \cdot e^{i \cdot \phi} + d \cdot e^{i \cdot \lf(\frac{r - R}{r}\ri) \cdot \phi}.
	\label{Equ:Hypotrochoid}
\end{eqnarray}

Geometrically, a hypotrochoid is given the following way: Let a small circle $C_1$ with radius $r$ roll along the interior of a larger circle $C_2$ with radius $R$. Mark a point $p$ at the end of a segment with length $d$ starting at the center of $C_1$. Then the hypotrochoid is the trajectory of $p$.

We say that a curve $\gamma$ is a \textit{hypotrochoid up to a rotation} if there exists some re-parametrization $\rho_k : [0,2\pi) \ra [0,2\pi), \phi \mapsto k + \phi \mod 2 \pi$ with $k \in
[0,2\pi)$ such that $\gamma \circ \rho_k^{-1}$ is a hypotrochoid. If $k = 0$ or $k = \pi$, then we say that $\gamma$ is a \textit{real hypotrochoid}. Hypotrochoids are closed, continuous curves in the complex plane, which attain values in the closed annulus with outer radius $(R-r) + d$ and inner radius $(R-r) - d$ for $(R - r) \geq d$. Furthermore, if they are real, then they are symmetric along the real line. For an overview about hypocycloids and other plane algebraic curves see \cite{Brieskorn:Knoerrer}.

In order to prove the second key lemma, which is needed for the proof of Theorem \ref{Thm:AmoebaSolidness}, we make use of the following special case of \cite[Theorem 4.1]{Theobald:deWolff:Trinomials}.

\begin{lemma}
Let $g: S^1 \to \C, \phi \mapsto e^{i s \phi} + q e^{-i t \phi} + p$ with $p,q \in \C^*$. Then $g$ is a hypotrochoid up to a rotation around the point $p$ with parameters $R = (t + s) / t$, $r = s / t$ and $d = |q|$ rotated by $\arg(q) \cdot s$.
\label{Lem:Hypotrochoid}
\end{lemma}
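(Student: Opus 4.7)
The strategy is to show that after translating by $-p$ --- which moves the curve's centre to the origin --- the shifted curve $h(\phi) := g(\phi) - p = e^{is\phi} + q e^{-it\phi}$ coincides, up to a re-parametrization of $\phi$ and a rotation in the plane, with the hypotrochoid defined in \eqref{Equ:Hypotrochoid} with the stated parameters. Translating back by $+p$ then recentres everything at $p$, so the translation step is trivial and the real content lies in matching $h$ to the standard hypotrochoid formula.

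Substituting $R = (s+t)/t$, $r = s/t$, and $d = |q|$ into \eqref{Equ:Hypotrochoid} yields the immediate simplifications $R - r = 1$ and $(r - R)/r = -t/s$, so the standard hypotrochoid becomes
$$\gamma(\psi) \;=\; e^{i\psi} + |q|\, e^{-i(t/s)\psi}.$$
The re-parametrization $\psi = s\phi$, chosen so that the angular speeds of the two exponentials align with those appearing in $h$, then gives $\gamma(s\phi) = e^{is\phi} + |q|\, e^{-it\phi}$. Comparing with $h(\phi)$, the only discrepancy is the factor $e^{i\arg(q)}$ attached to the second term, and everything else matches on the nose.

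To absorb this phase I would write $q = |q|e^{i\theta}$ with $\theta = \arg(q)$ and apply a combined parameter shift $\phi \mapsto \phi + k$ together with an in-plane rotation by an angle $\alpha$ about the origin. Matching the coefficients of $e^{is\phi}$ and $e^{-it\phi}$ in $h(\phi - k) = e^{-isk} e^{is\phi} + |q|\, e^{i(\theta + tk)} e^{-it\phi}$ against $e^{i\alpha} \gamma(s\phi)$ produces the pair of conditions $\alpha \equiv -sk$ and $\alpha \equiv \theta + tk \pmod{2\pi}$, which admits a unique solution in $(k,\alpha)$ modulo the appropriate periods. The geometric picture --- two rotating vectors of fixed lengths summed at commensurable angular speeds --- is transparent and the translation, parameter substitution, and re-scaling $\psi = s\phi$ are all routine; the main obstacle is the final bookkeeping step, namely correctly distributing $\arg(q)$ between the shift $k$ and the rotation $\alpha$, and matching the resulting rotation to the value $\arg(q)\cdot s$ in the convention used in the statement.
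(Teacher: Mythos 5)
Your argument is the same as the paper's: subtract $p$, observe that the stated parameters give $R-r=1$ and $(r-R)/r = -t/s$ in \eqref{Equ:Hypotrochoid}, and rescale the parameter by $\psi = s\phi$ so that the two rotating terms match those of $g-p$ up to the phase $e^{i\arg(q)}$. The paper's own proof is exactly this two-line computation and is, if anything, less explicit than you are about how the residual phase $\arg(q)$ is absorbed into the re-parametrization/rotation, so your proposal is fine.
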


The proof of this lemma is a straightforward computation.

\begin{proof}
The non-constant part $g - p$ of the function $g$ is given by
\begin{eqnarray*}
	(g-p)(\phi) & = & e^{i s \phi} + |q| \cdot e^{i \cdot (\arg(q) - t \phi)}.
\end{eqnarray*}
Since, with our choice of parameters, $R - r = 1$ and $(r - R)/r = -t/s$, it follows by \eqref{Equ:Hypotrochoid} after replacing $\phi$ by $\phi' = s \phi$  that $g - p$ is a hypotrochoid up to a rotation.
\end{proof}

Lemma \ref{Lem:Hypotrochoid} about hypotrochoids allows us to prove the following technical lemma.

\begin{lemma}
Let $g: (S^1)^2 \to \C, (\phi_1,\phi_2) \mapsto b_1 e^{i \phi_1} + b_2 e^{i \phi_2} + b_{3} e^{-i \cdot (\lam_1 \phi_1 + \lam_2 \phi_2)}$ with $b_1, b_2 \in \R_{> 0}$, $b_3 \in \R^*$, $|b_1| \geq |b_2| \geq |b_3|$ and $\lam_1,\lam_2 \in \Q$. Then $g$ attains all real values in the interval $ [b_1,b_1 + b_2 + b_3] \subset \R_{> 0}$.
\label{Lem:DimensionTwo}
\end{lemma}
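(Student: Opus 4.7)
The plan is to apply the intermediate value theorem along a continuous curve in $(S^1)^2$ on which $g$ takes real values. Evaluation at the origin gives $g(0,0) = b_1 + b_2 + b_3$, which is already the right endpoint of the target interval, so it suffices to construct a continuous arc $\gamma \colon [0,1] \to (S^1)^2$ with $\gamma(0) = (0,0)$, with $g \circ \gamma$ real valued on all of $[0,1]$, and with $g(\gamma(1)) \leq b_1$; IVT will then yield every value in $[b_1, b_1+b_2+b_3]$.

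To produce the far endpoint of the arc I would look at the slice $\phi_1 = \pi$, where $g(\pi, \phi_2) = -b_1 + b_2 e^{i\phi_2} + b_3 e^{-i(\lambda_1\pi + \lambda_2\phi_2)}$. The real part is bounded above by $-b_1 + b_2 + |b_3| \leq b_1$ since $b_2, |b_3| \leq b_1$, and the imaginary part $b_2\sin\phi_2 - b_3\sin(\lambda_1\pi + \lambda_2\phi_2)$ has zero average over one period, so it must vanish at some $\phi_2^\star$. This produces a point $(\pi,\phi_2^\star)$ at which $g$ is real and $\leq b_1$.

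To connect $(0,0)$ to $(\pi, \phi_2^\star)$ I would follow the level set $Z = \{\IM g = 0\}$, a closed subset of the torus that is (by the implicit function theorem) a smooth $1$-dimensional submanifold away from critical points of $\IM g$. The key structural input here is Lemma \ref{Lem:Hypotrochoid}: for each fixed $\phi_1$, the slice $\phi_2 \mapsto g(\phi_1, \phi_2)$ is a hypotrochoid centered at $b_1 e^{i\phi_1}$ with inner radius $b_2 - |b_3|$, outer radius $b_2 + |b_3|$, and rotation angle $\arg b_3 - \lambda_1\phi_1$. As $\phi_1$ sweeps from $0$ to $\pi$, the center of this hypotrochoid moves continuously along a semicircle of radius $b_1$ from $b_1$ to $-b_1$ and the hypotrochoid itself rotates continuously; this allows one to track a connected component $Z_0$ of $Z$ from $(0,0)$ to $\{\phi_1 = \pi\}$, meeting a point of the form $(\pi, \phi_2^\star)$ and giving the desired arc.

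The main obstacle is this last connectivity step: $Z$ may decompose into several components, and $\IM g$ may have critical points along $Z$ where $Z$ is not smooth, so that the component through $(0,0)$ need not a priori reach $\{\phi_1 = \pi\}$. I would handle this by a perturbation argument, slightly deforming $(b_1, b_2, b_3, \lambda_1, \lambda_2)$ to a generic configuration in which $Z$ is a union of smooth disjoint circles with $Z_0 \ni (0,0)$ meeting $\{\phi_1 = \pi\}$ transversally, proving the lemma there, and then passing to the limit using that the image $g((S^1)^2)$ depends continuously on the coefficients and is compact, so both endpoints and the whole attained interval vary continuously. The case $b_3 < 0$ is analogous, with the hypotrochoid's rotation angle shifted by $\pi$, consistent with the narrower target interval $[b_1, b_1 + b_2 - |b_3|]$.
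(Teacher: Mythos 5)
Your overall skeleton --- real-slicing via the hypotrochoid structure of Lemma \ref{Lem:Hypotrochoid}, a homotopy in $\phi_1$ from $0$ to $\pi$, and the intermediate value theorem applied to $g$ along a locus where it is real --- is the same as the paper's, and your endpoint computations ($g(0,0)=b_1+b_2+b_3$; every real value of the slice $\phi_1=\pi$ is at most $-b_1+b_2+|b_3|\le b_1$) are correct. The gap is the step you yourself flag: you never establish that the connected component $Z_0$ of $Z=\{\IM g=0\}$ containing $(0,0)$ reaches a point where $g\le b_1$, and the proposed fix does not do so either. Perturbing to generic parameters (Sard/transversality) only makes $Z$ a disjoint union of smooth embedded circles; it gives no control whatsoever over \emph{which} circles those are. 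A smooth component through $(0,0)$ can be a contractible loop confined to a small range of $\phi_1$, so that $g|_{Z_0}$ turns around and increases again long before it drops to $b_1$; nothing in your argument rules this out, and passing to a limit of generic configurations cannot create connectivity that the generic configurations lack. The problem is not repairable by switching to another component: since $b_1+b_2+b_3=\max|g|$ is attained essentially only at $(0,0)$ (for $b_3>0$), the top of the target interval is accessible only from $Z_0$, so if $Z_0$ turns back early your single application of the IVT yields only a proper subinterval.

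It is worth seeing that the paper's proof is built precisely to avoid needing this connectivity. It lists the real values $\mu_1\ge\cdots\ge\mu_k$ of the $\phi_1=0$ hypotrochoid, notes $[b_1,b_1+b_2+b_3]\subset[\mu_k,\mu_1]$, and covers $[\mu_k,\mu_1]$ by the consecutive subintervals $[\mu_{j+1},\mu_j]$. For each $j$ it tracks, under the homotopy, a topological circle $\rho_j$ in the image plane formed from an arc of the real hypotrochoid and its complex conjugate. Then a dichotomy: either $\rho_j$ detaches from $\R$ at some time $\tau<1$ --- exactly the ``turning back'' scenario your arc-following cannot handle --- in which case the two real intersection points merge and in doing so sweep all of $[\mu_{j+1},\mu_j]$; or the circle still meets $\R$ at $\phi_1=\pi$, where every real value is at most $\nu_1\le\mu_k\le\mu_{j+1}$, so the tracked point starting at $\mu_j$ must again pass through all of $[\mu_{j+1},\mu_j]$. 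To close your proof you would need either to reproduce a case analysis of this kind or to supply an actual argument (homological or otherwise) that the component of $\{\IM g=0\}$ through the origin descends to $b_1$; as written, that is asserted rather than proved.
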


With this Lemma we have everything what is needed to prove Theorem \ref{Thm:AmoebaSolidness}. We provide the quite long and technical proof of Lemma \ref{Lem:DimensionTwo} after the proof of Theorem \ref{Thm:AmoebaSolidness}.

\begin{proof}{(Proof of Theorem \ref{Thm:AmoebaSolidness})}
Let $f_c = \sum_{j = 1}^{n+1} b_j \mathbf{x}^{\alp(j)} - c$ with $b_1 \geq \cdots \geq b_{n+1} \in \R_{> 0}$ and $\mathbf{w} \in \R^n$ such that $\{\alp(1),\ldots,\alp(n+1),0\} \subset \Z^n$ forms a circuit with $0$ in the interior of $\conv \{\alp(1),\ldots,\alp(n+1)\}$. By Corollary \ref{Cor:StandardZeroForm}, we can assume that $f_c$ is in zero standard form, i.e., we can assume that $\alp(j) = \mu e_j \in \N^n$ for $1 \leq j \leq n$, with $\lam_j \in \Q^*$, $\mu \in \N^*$ denoting the least common multiple of the denominators of $\lam_0,\ldots,\lam_n \in \Q_{|(0,1)}$ and $e_j$ denoting the standard basis vector. By construction, we have $\alp(n+1) = - \mu / \lam_0 \cdot \sum_{j = 1}^n \lam_j e_j \in \Z^n$. Furthermore, we can assume without loss of generality $\mathbf{w} = \mathbf{1}$ after adjusting the coefficients $b_j$ if necessary. \\

\noindent We investigate the fiber function
\begin{eqnarray*}
	f_c^{|\mathbf{1}|} & = & \sum_{j = 1}^n b_j e^{i \mu \phi_j} + b_{n+1} e^{-i \cdot \frac{\mu}{\lam_0} \cdot \sum_{j = 1}^n \lam_j \phi_j} - c.
\end{eqnarray*}
We have to show that $\cV(f_c^{|\mathbf{1}|}) \neq \emptyset$ for all $c$ with $\Psi_f = \Psi_{\mathbf{1}} = |b_1| \leq c \leq \Theta_{\mathbf{1}}$. By applying Lemma \ref{Lem:DimensionTwoSuffices}, $f_c^{|\mathbf{1}|}$ attains all real values in the interval $[|b_1| + |b_n| + |b_{n+1}| - \sum_{j = 2}^{n-1} |b_j| - c, \Theta_\mathbf{1} - c]$. Hence, if $|b_n| + |b_{n+1}| - \sum_{j = 2}^{n-1} |b_j| \leq 0$, then we are done. This is always the case, if $n \geq 4$ or if $n = 3$ and $b_2 \geq b_3 + b_4$.\\

Let now $n \in \{2,3\}$. If $n = 3$, the we apply Lemma \ref{Lem:DimensionTwoSuffices}, fix $\phi_2 = \pi$, and restrict $f_c^{|\mathbf{1}|}$ to
\begin{eqnarray*}
	g(\phi_1,\phi_n)_c & = & b_1 e^{i \phi_1} + b_n e^{i \phi_n} + b_{n+1} e^{- i \cdot \frac{\mu}{\lam_0} \cdot (\pi \lam_2 + \lam_1 \phi_1 + \lam_n \phi_n)} - \sum_{1 < j < n} b_j - c,
\end{eqnarray*}
which is defined on the sub 2-torus of $\F_{|\mathbf{1}|}$ given by $(\phi_1,\phi_n)$. Since $(\mu \cdot \lam_2) / \lam_0$ is an integer, $b_{n+1} \cdot e^{-i \cdot \mu \cdot \lam_2 / \lam_0}$ is real and hence we can apply Lemma \ref{Lem:DimensionTwo}. It yields that $g(\phi_1,\phi_n)_c$ attains all real values in the interval $[b_1 - \sum_{1 < j < n} b_j - c, b_1 + b_n + b_{n+1} - \sum_{1 < j < n} b_j - c]$. Thus, all real values in the interval $[|b_1| + |b_n| + |b_{n+1}| - \sum_{j = 2}^{n-1} |b_j| - c, \Theta_\mathbf{1} - c]$ are attained by $f_c^{|\mathbf{1}|}$ and hence we find a root of $f_c^{|\mathbf{1}|}$ for every choice of $b_1 \leq c \leq \Theta_{\mathbf{1}}$. Therefore, $\mathbf{1} \in \cA(f_c)$ for all $c \in [-\Theta_f,-\Psi_f]$.
\end{proof}

We close the section with the proof of Lemma \ref{Lem:DimensionTwo}.

\begin{proof}{(Proof of Lemma \ref{Lem:DimensionTwo})}
For every fixed value of $\phi_1 \in [0,2\pi)$ the values of $g$ are given by a curve of the form
\begin{eqnarray*}
	h_{\phi_1}: [0,2\pi) \to \C, \quad \phi_2 \mapsto b_2 e^{i \phi_2} + b_3 e^{-i (\lam_1 \phi_1 + \lam_2 \phi_2)} + b_1 e^{i \phi_1}.
\end{eqnarray*}
By Lemma \ref{Lem:Hypotrochoid}, $h_{\phi_1}$ is a hypotrochoid up to a rotation around the point $b_1 e^{i \phi_1}$ attaining absolute values in the annulus $A_{\phi_1}$ with outer
radius $b_2 + b_3$ and inner radius $b_2 - b_3$ around the point $b_1 e^{i \phi_1}$ ($A_{\phi_1}$ degenerates to a disc for $b_2 = b_3$). Since $|b_2| \geq |b_3|$, it follows from Lemma \ref{Lem:Rouche} that $h_{0}$ intersects the real coordinate axis in both at least one point greater or equal than $b_1$ and at least one point less or equal than $b_1$. More specific, let $\phi_2(1),\ldots,\phi_2(k) \in [0,2\pi)$ denote the arguments such that $\mu_j = g(0,\phi_2(j)) \in \R$ with $\mu_1 \geq \cdots \geq \mu_k$. Analogously, we denote by $\phi_2(1)',\ldots,\phi_2'(l) \in [0,2\pi)$ the arguments such that $\nu_j = g(\pi,\phi_2'(j)) \in \R$ with $\nu_1 \geq \cdots \geq \nu_l$.
Note that $\mu_1 \geq b_1 + b_2 - b_3$, $\mu_k \leq b_1 - b_2 + b_3$ and $\nu_1 \leq -b_1 + b_2 + b_3$ and therefore
\begin{eqnarray*}
  \nu_1 \ \leq \ \mu_k \ \leq \ b_1 \ \leq \ b_1 + b_2 - b_3 \ \leq \ \mu_1.
\end{eqnarray*}

The key observation of the proof is the following: $h_{\phi_1}$ depends continuously on $\phi_1$. But this means that 
\begin{eqnarray*}
	& & H: [0,1] \times [0,2\pi) \ \to \ \C \\
	& & H(\phi_1/(2\pi),h_{\phi_1}(\phi_2)) \ = \ g(\phi_1,\phi_2) \ = \ b_1 e^{i \phi_1} + b_2 e^{i \phi_2} + b_3 e^{i (- \lam_1 \phi_1 - \lam_2 \phi_2)}
\end{eqnarray*}
is a homotopy of hypotrochoid curves along the circle with radius $b_1$. 

Since $[b_1,b_1 + b_2 + b_3] \subset [\mu_k,\mu_1] \subset \R$, the proof is completed, if we can show that all real values in the interval $[\mu_k,\mu_{k-1}] \cup \cdots \cup [\mu_2,\mu_{1}] = [\mu_k,\mu_1]$ are attained by $g$. 

Since $g(0,\phi_2)$ is a real hypotrochoid, i.e., in particular, connected and symmetric along the real line, 
for every $1 \leq j \leq k-1$ there exists a closed connected subset $\gamma_j$ of the trajectory of the hypotrochoid $g(0,\phi_2)$ and its pointwise complex conjugate $\ovl{\gamma_j}$ both connecting $\mu_j$ and $\mu_{j+1}$. I.e., $\rho_j = \gamma_j \cup \ovl{\gamma_j}$ forms a topological circle intersecting $\R$ exactly in $\mu_j$ and $\mu_{j+1}$ and thus its projection on $\R$ covers $[\mu_{j+1},\mu_{j}]$. Hence, $\bigcup_{j = 1}^{k-1} \rho_j$ projected on the real line covers $[\mu_k,\mu_1]$.

Now, we restrict the homotopy $H$ of hypotrochoids to a particular circle $\rho_j$ and to moving $\phi_1$ continuously from 0 to $\pi$, i.e., the induced homotopy is $H_j: \rho_j \times [0,1] \to \C$ of the circle $\rho_j$ moved around the half-circle $\{b_1 e^{i \cdot \phi_1} : \phi_1 \in [0,2\pi)\}$. Two cases can occur during the homotopy $H_j$: Either $\R$ intersects the circle $\rho_j$ transversally in two points during the whole homotopy, or there exists a point $\tau \in (0,1)$ such that the circle and $\R$ intersect non-transversally at $H_j(\rho_j,\tau)$.

First assume that there exists a point $\tau \in (0,1)$ along the homotopy such that $H_j(\rho_j,\tau)$ intersects the real line non-transversally in a single point $s \in \R$. Hence, $H_j$ yields in particular a new homotopy $\wh H_j: \{\mu_j,\mu_{j+1}\} \times [0,\tau] \to \R$ of both the two points $\mu_j$ and $\mu_{j+1}$ to $s$ along the real line. Thus, for all points $x \in [\mu_{j+1},\mu_j]$ there exists $\tau' \in [0,\tau]$ such that $x = \wh H_j(\mu_j,\tau')$ or $x = \wh H_j(\mu_{j+1},\tau')$, i.e., all points in $[\mu_{j+1},\mu_j]$ are visited during the homotopy $\wh H_j$ and hence every real value in $[\mu_{j+1},\mu_j]$ is attained by $g$ (see Figure \ref{Fig:AmoebaSolid1}). 

Now assume that $H_j(\rho_j,\tau)$ intersects the real line in two distinct points for every $\tau \in [0,1]$. Thus, again, there is an induced homotopy of points $\wh H_j: \{\mu_j,\mu_{j+1}\} \times [0,1] \to \R$ along the real line. Since $H_j$ is a restriction of $H$, we know that $\wh H_j(\{\mu_j,\mu_{j+1}\},1)$ are real points of the hypotrochoid $H(\pi,h_\pi(\phi_2))$, i.e., $\wh H_j(\{\mu_j,\mu_{j+1}\},1) \in \{\nu_1,\ldots,\nu_l\}$. Since $\nu_l \leq \cdots \leq \nu_1$ and  $\nu_1 \leq \mu_j$ for all $1 \leq j \leq k$, again, all points in $[\mu_{j+1},\mu_j]$ are visited during the homotopy $\wh H_j$ (see Figure \ref{Fig:AmoebaSolid2}).

\begin{figure}
\ifpictures
$$
\includegraphics[width=0.3\linewidth]{./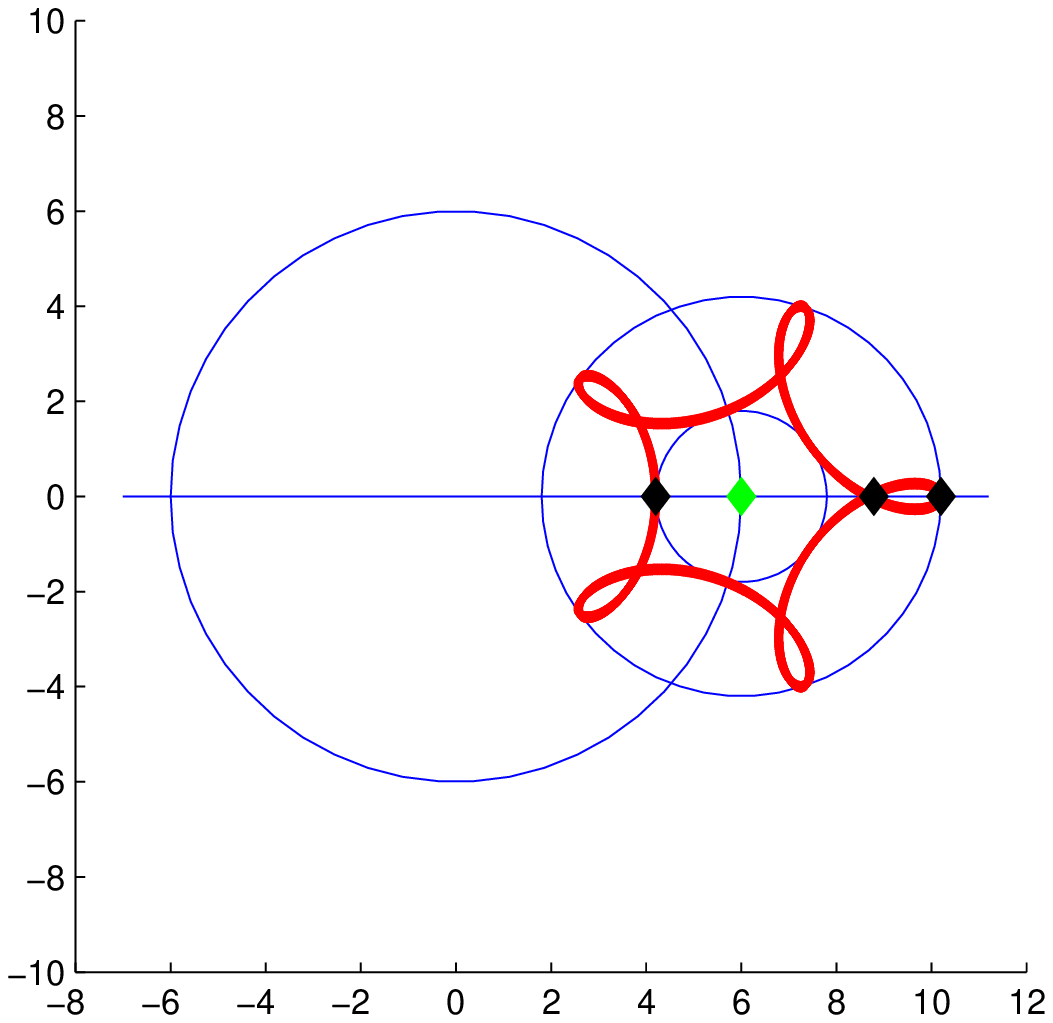}
\includegraphics[width=0.3\linewidth]{./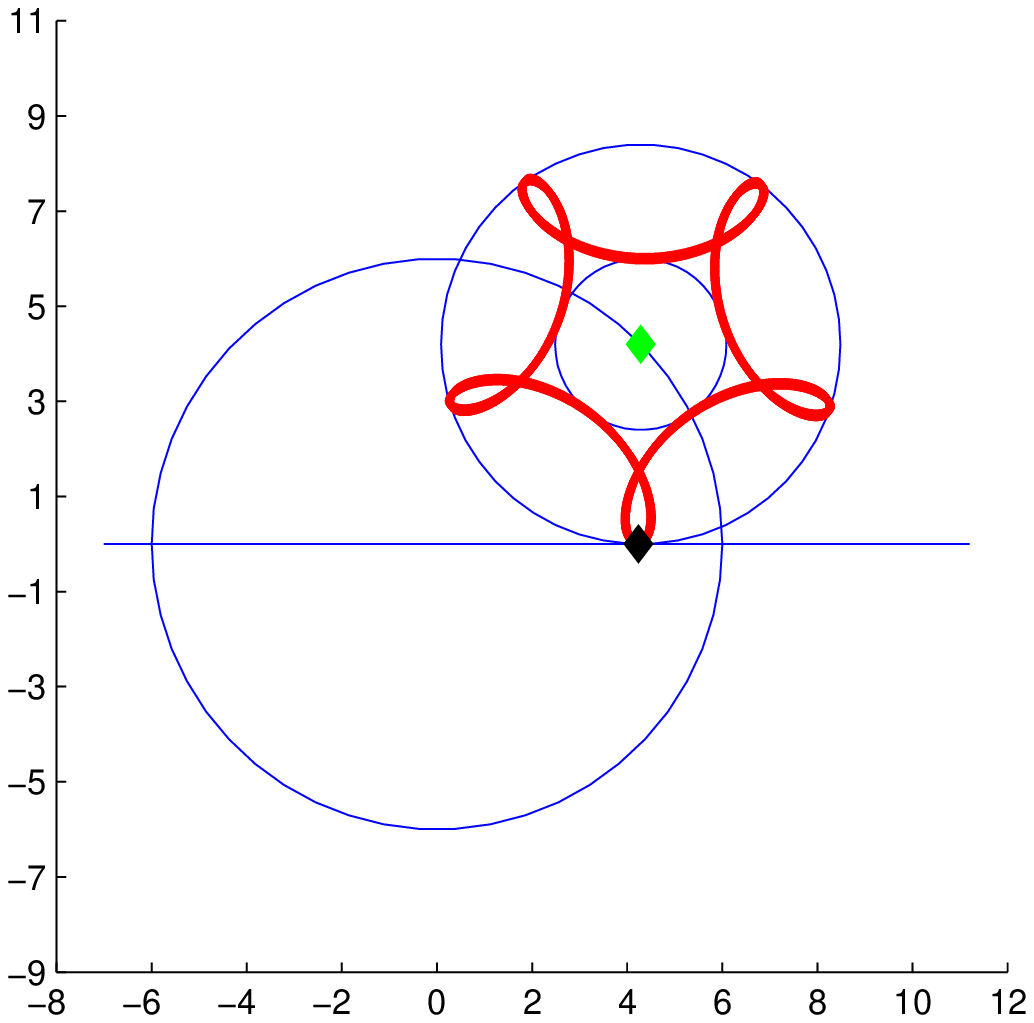}
\includegraphics[width=0.3\linewidth]{./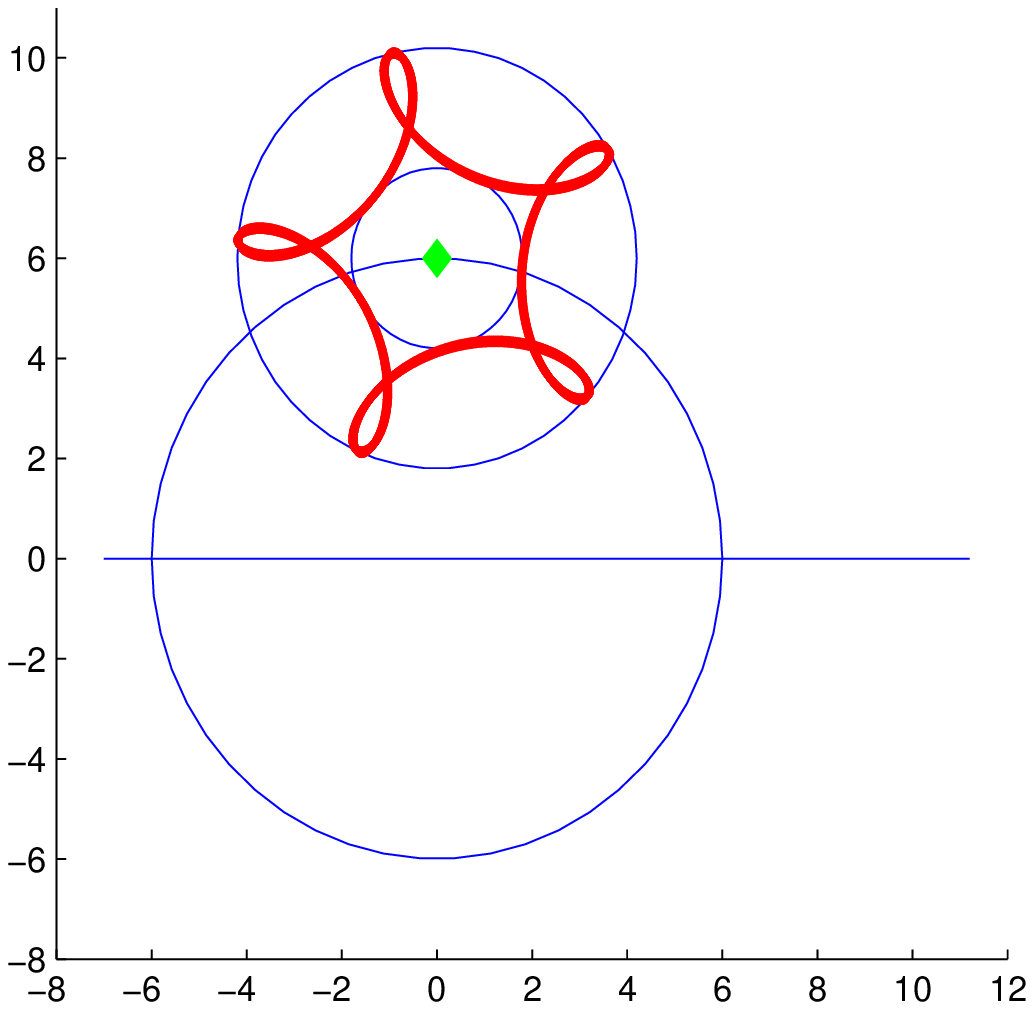}
$$
\fi
\caption{Homotopy of a hypotrochoid where the intersection of the hypotrochoid with the real line becomes empty during the homotopy.}
\label{Fig:AmoebaSolid1}
\end{figure}
\end{proof}

\begin{figure}
\ifpictures
$$
\includegraphics[width=0.3\linewidth]{./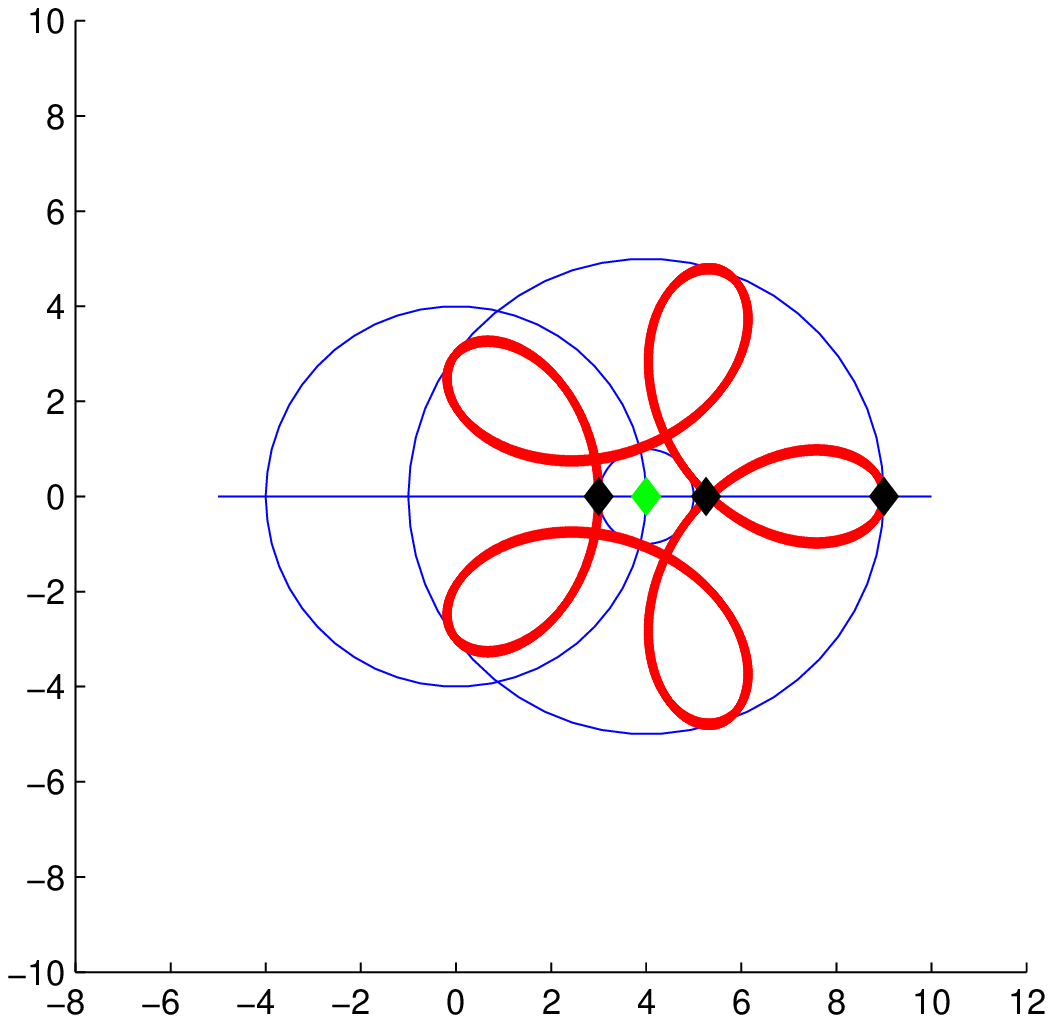}
\includegraphics[width=0.3\linewidth]{./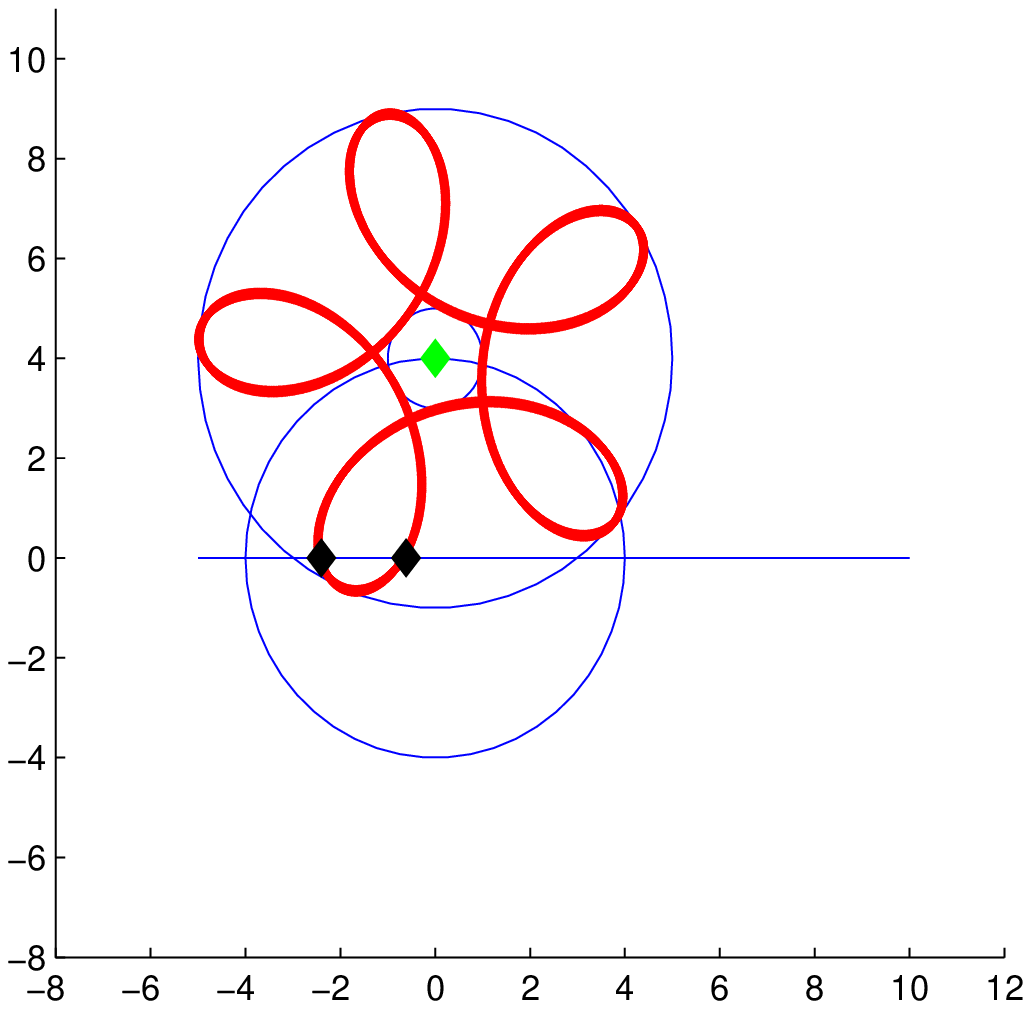}
\includegraphics[width=0.3\linewidth]{./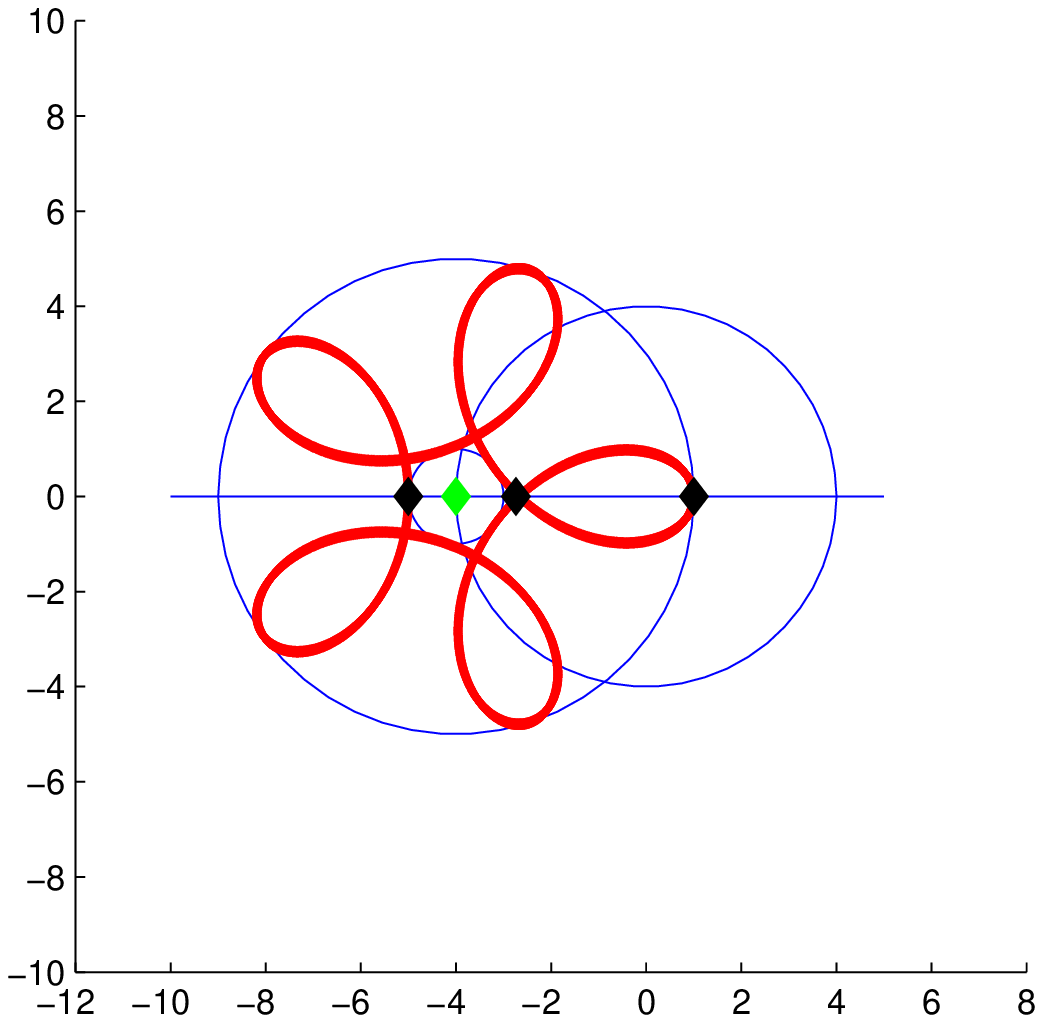}
$$
\fi
\caption{Homotopy of a hypotrochoid always intersecting the real line.}
\label{Fig:AmoebaSolid2}
\end{figure}

\section{Sums of Squares supported on a circuit}
\label{Sec:PSDSOS}

In this section we completely characterize the section $\Sigma_{n,2d}^y$. It is particularly interesting that this section depends heavily on the lattice point configuration in $\Delta$, thereby, yielding a connection to the theory of lattice polytopes and toric geometry. By investigating this connection in more detail, we will prove that the sections $P_{2,2d}^y$ and $\Sigma_{2,2d}^y$ almost always coincide and that $P_{n,2d}^y$ and $\Sigma_{n,2d}^y$ contain large sections, at which nonnegative polynomials are equal to sums of squares for $n > 2$, see Corollaries \ref{cor:R2} and \ref{cor:bound}.

Surprisingly, the sums of squares condition is exactly the same as for the corresponding agiforms. For this, we briefly review the Gram matrix method for sums of squares polynomials. For $d \in \N$ let $\N_d^n = \{\alpha\in\N^n : \alpha_1+\dots +\alpha_n \leq d\}$ and $p = \sum_{k=1}^{r}h_k^2$ where $p(\mathbf{x}) = \sum_{\alpha \in \N_{2d}^n}^{}a(\alpha)\mathbf{x}^{\alpha}$ and
$h_k(\mathbf{x}) = \sum_{\beta \in \N_d^n}^{}b_k(\beta)\mathbf{x}^{\beta}$. Let $B(\beta) = (b_1(\beta),\dots,b_r(\beta))$ and $G(\beta,\beta') = B(\beta)\cdot B(\beta') = \sum_{k = 1}^{r}b_k(\beta)b_k(\beta')$ with $\beta,\beta'\in\N_d^n$. Comparing coefficients one has
$$a(\alpha) \ = \ \sum_{\beta+\beta' = \alpha}^{}G(\beta,\beta') \ = \ \sum_{\beta \in \N^{n}_{d}} G(\beta,\alpha-\beta).$$ 
In this case, $[B(\beta)\cdot B(\beta')]_{\beta,\beta' \in \N_d^n}$ is a positive semidefinite matrix.

Furthermore, we need the following well-known lemma, see \cite{Blekherman:Parrilo:Thomas}.
\begin{lemma}
Let $f \in \Sig_{n,2d}$ be a sum of squares and $T \in GL_n(\R)$ be a matrix yielding a variable transformation $\mathbf{x} \mapsto T\mathbf{x}$. Then $f(T\mathbf{x})$ also is a sum of squares.
\label{Lem:SOSTranformationInvariant}
\end{lemma}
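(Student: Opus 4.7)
The plan is essentially a one-line verification: the sum of squares property is preserved under arbitrary affine (in fact, polynomial) substitutions of the variables, because squaring commutes with substitution. So the argument will proceed by unpacking the definition and applying it term by term.

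More concretely, first I would write an arbitrary sum of squares decomposition $f = \sum_{i=1}^{k} h_i^2$ with each $h_i \in \R[\mathbf{x}]_d$. Then I would observe that since the map $\mathbf{x} \mapsto T\mathbf{x}$ is a linear (hence polynomial) map $\R^n \to \R^n$, the composition $h_i \circ T$ is again an element of $\R[\mathbf{x}]$, and moreover $\deg(h_i \circ T) \leq \deg h_i \leq d$ because $T$ is linear. Substituting into the decomposition gives
\begin{equation*}
f(T\mathbf{x}) \;=\; \sum_{i=1}^{k} h_i(T\mathbf{x})^2 \;=\; \sum_{i=1}^{k}\bigl((h_i \circ T)(\mathbf{x})\bigr)^2,
\end{equation*}
which exhibits $f(T\mathbf{x})$ as a sum of squares of polynomials of degree at most $d$, hence $f(T\mathbf{x}) \in \Sigma_{n,2d}$. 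Note that invertibility of $T$ is not actually needed for this direction; it only becomes relevant if one also wants the converse.

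There is no serious obstacle here; the only tiny point worth noting is the degree bookkeeping, which is immediate from the linearity of $T$. I would therefore keep the proof to a couple of lines.
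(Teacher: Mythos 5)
Your proof is correct and is exactly the standard argument; the paper itself offers no proof for this lemma, merely citing it as well-known, and your two-line verification (substituting $T\mathbf{x}$ into a decomposition $f=\sum_i h_i^2$ and noting that linearity of $T$ preserves the degree bound) is precisely what that citation stands in for. Your remark that invertibility of $T$ is only needed for the converse is also accurate.
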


Now, we can characterize the sums of squares among nonnegative polynomials in $P_{\Delta}^y$.

\begin{thm}
\label{thm:sos}
 Let $f = \lambda_0 + \sum_{j=1}^n b_j \mathbf{x}^{\alpha(j)} + c\cdot \mathbf{x}^y \in P_{n,2d}^y$. Then
\begin{eqnarray*}
	f \in \Sigma_{n,2d}^y & \text{ if and only if } & y \in \Delta^* \ \text{ or } \ c > 0 \ \text{ and } \ y \in (2\N)^n.
\end{eqnarray*}
Furthermore, if $f \in \Sigma_{n,2d}^y$, then $f$ is a sum of binomial squares.
\end{thm}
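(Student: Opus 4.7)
The plan is to reduce both directions to Reznick's Theorem~\ref{thm:rezmain} via the diagonal rescaling $x_j \mapsto e^{s^*_j} x_j$ coming from the norm minimizer of Proposition~\ref{Prop:GlobalMinimizer}. By the defining identities $b_j e^{\langle \mathbf{s}^*, \alpha(j)\rangle} = \lambda_j$ and $e^{\langle \mathbf{s}^*, y\rangle} = 1/\Theta_f$, this substitution sends $f$ to
$$\widetilde{f} \ = \ \lambda_0 + \sum_{j=1}^n \lambda_j \mathbf{x}^{\alpha(j)} + \widetilde{c}\, \mathbf{x}^y, \qquad \widetilde{c} = c/\Theta_f,$$
with $\widetilde{c} \in [-1,1]$ (respectively $\widetilde{c} \geq -1$) by Theorem~\ref{Thm:Positiv}. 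Since diagonal rescaling by positive reals and coordinate sign changes $x_k \mapsto -x_k$ preserve the SOS property via Lemma~\ref{Lem:SOSTranformationInvariant}, it suffices to analyse $\widetilde{f}$ with $\widetilde{c} \leq 0$: any positive $\widetilde{c}$ paired with $y \notin (2\N)^n$ is flipped to a negative value by a sign change in the odd-$y_k$ coordinates.

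For the ``if'' direction, the case $c > 0$ with $y \in (2\N)^n$ makes $f$ a sum of monomial squares. When $y \in \Delta^*$, after reducing to $\widetilde{c} \in [-1,0]$ I write
$$\widetilde{f} \ = \ (1+\widetilde{c})\Bigl(\lambda_0 + \sum_{j=1}^n \lambda_j \mathbf{x}^{\alpha(j)}\Bigr) + (-\widetilde{c})\Bigl(\lambda_0 + \sum_{j=1}^n \lambda_j \mathbf{x}^{\alpha(j)} - \mathbf{x}^y\Bigr).$$
The first summand is a sum of monomial squares (all $\alpha(j)$ are even), and the second is the agiform $f(\Delta,\lambda,y)$, which is a sum of binomial squares by Theorem~\ref{thm:rezmain} since $y \in \Delta^*$. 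A positive combination of binomial squares remains a sum of binomial squares (absorb the positive scalars into the squared binomials), and pulling back through the diagonal substitution and the sign changes preserves this structure.

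For the converse, assume $f \in \Sigma_{n,2d}^y$ and that we are not in the trivial case. Reduce to $\widetilde{f} = \sum_k h_k^2$ with $\widetilde{c} < 0$. Let $S = \bigcup_k \supp(h_k)$; the standard Newton-polytope lemma for SOS gives $S \subseteq \tfrac{1}{2}\Delta \cap \Z^n$, and after removing each $\gamma \in S$ with $\sum_k b_k(\gamma)^2 = 0$ (which forces the corresponding Gram row to vanish by positive semidefiniteness) we may assume $G(\gamma,\gamma) > 0$ for every $\gamma \in S$. The extreme monomials of $\widetilde{f}$ force $\alpha(j)/2 \in S$ for each $j$, so for $L := 2S \cup \{y\} \subseteq \Delta \cap \Z^n$ we have $\hat\Delta \subseteq L$. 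Comparing coefficients of $\mathbf{x}^\beta$ in $\widetilde{f} = \sum_k h_k^2$ for each $\beta \in L \setminus \hat\Delta$: if $\beta = 2\gamma \neq y$ the coefficient of $\mathbf{x}^\beta$ in $\widetilde{f}$ vanishes, so the strictly positive diagonal $G(\gamma,\gamma)$ must be cancelled by off-diagonal entries $G(\gamma',\gamma'')$ with $\gamma'+\gamma'' = 2\gamma$ and $\gamma' \neq \gamma''$, placing $\beta \in \overline{A}(L)$; when $\beta = y$ the analogous computation uses $\widetilde{c} \neq 0$. Thus $L$ is $\hat\Delta$-mediated, so $L \subseteq \Delta^*$ by maximality of $\Delta^*$ among $\hat\Delta$-mediated sets, and in particular $y \in \Delta^*$.

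The main obstacle is the Gram-matrix bookkeeping in the ``only if'' direction, specifically the case $\beta = y$: when $y \in (2\N)^n$ the diagonal term $G(y/2,y/2)$ may itself be positive, so the sign condition $\widetilde{c} < 0$ is essential to force a genuinely distinct pair $\gamma' \neq \gamma''$ in the representation $y = \gamma' + \gamma''$, whereas for $y \notin (2\N)^n$ no diagonal contribution is possible and the cross-term representation is automatic. The final claim that $f$ is a sum of binomial squares follows from the ``if''-direction construction together with Reznick's explicit binomial-square decomposition of SOS agiforms.
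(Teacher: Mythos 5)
Your proof is correct and follows essentially the same route as the paper: the ``only if'' direction is the same Gram-matrix argument showing $2S\cup\hat\Delta\cup\{y\}$ is $\hat\Delta$-mediated, and the ``if'' direction reduces to Reznick's theorem on agiforms via the rescaling $x_j\mapsto e^{s_j^*}x_j$ and Lemma~\ref{Lem:SOSTranformationInvariant}. The only (immaterial) difference is that you rescale first and then decompose $\widetilde f$ as a convex combination of the $\widetilde c=0$ and $\widetilde c=-1$ polynomials, whereas the paper interpolates between $c=+\Theta_f$ and $c=-\Theta_f$ and rescales at the extremal coefficient.
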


Note again that for $f \in P_\Delta^y$ the condition $c > 0$ and $y \in (2\N)^n$ holds if and only if $f$ is a sum of monomial squares such that the above theorem holds trivially.

\begin{proof}
First, assume that $f \in \Sigma_{n,2d}^y$. We can assume that $c < 0$ by the following argument: If $y \in (2\N)^n$, then $f$ is obviously a sum of (monomial) squares for $c > 0$. If $y \notin (2\N)^n$ and $c > 0$, then, by Lemma \ref{Lem:SOSTranformationInvariant} and a suitable variable transformation as in the proof of Theorem \ref{Thm:Positiv}, we can reduce to the case $c < 0$. Let $f = \sum_{}^{}h_k^2$ and define $M = \{\beta : b_k(\beta) \neq 0 \,\,\, \textrm{for some}\,\,\, k\}$ with $\beta$ and $b_k(\beta)$ as in the Gram matrix method. Following \cite[Theorem 3.3]{Reznick:AGI}, we claim that the set 
$L = 2M \cup \hat\Delta \cup \{y\}$ is $\hat\Delta$-mediated and hence $y \in \Delta^*$. Here, $\hat\Delta$ is the set of vertices of $\Delta$. In order to show the claim we write every $\beta \in L\setminus \hat\Delta$ as a sum of two distinct points in $M$, which implies that $\beta$ is an average of two distinct points in $2M \subseteq L$. Note that if $G(\alpha,\alpha') < 0$, then $b_k(\alpha)b_k(\alpha') < 0$ for some $k$ and hence $\alpha\neq\alpha'$ and $\alpha, \alpha' \in M$. Hence, it suffices to show that for $\beta \in L\setminus\hat\Delta$ there exists an $\alpha$ with $G(\alpha,\beta-\alpha) < 0$. We have $a(y) = c < 0$, so $G(\alpha_0, y-\alpha_0) < 0$ for some $\alpha_0$. If $\beta \neq y$ then $\beta \in L\setminus(\hat\Delta \cup \{y\})$ and $a(\beta) = 0 = \sum_{}^{} G(\alpha,\beta-\alpha)$. But $\beta \in 2M$, so $G(\frac{1}{2}\beta,\frac{1}{2}\beta) > 0$ and hence there has to exist an $\alpha$ with $G(\alpha,\beta-\alpha) < 0$ to let the sum vanish.\\

Let now $y \in \Delta^*$. We investigate two cases. First, let $y \notin (2\mathbb N)^n$. Then it suffices to prove the statement for $c = \pm\Theta_f$ by the following argument: Let $f_1 =\lambda_0 + \sum_{j=1}^n b_j \mathbf{x}^{\alpha(j)} - c\cdot \mathbf{x}^y \in P_{n,2d}^y$ and
$f_2 = \lambda_0 + \sum_{j=1}^n b_j \mathbf{x}^{\alpha(j)} + c\cdot \mathbf{x}^y \in P_{n,2d}^y$. Let $c^*$ be such that $-c < c^* < c$ and $f_3 =  \lambda_0 + \sum_{j=1}^n b_j \mathbf{x}^{\alpha_j} + c^*\cdot \mathbf{x}^y \in P_{n,2d}^y$. Then we have $f_3 = \lambda_1f_1 + \lambda_2f_2$ with $\lambda_1 = \frac{c+c^*}{2c}$, $\lambda_2 = \frac{c-c^*}{2c}$ and $\lambda_1, \lambda_2 > 0$, $\lambda_1 + \lambda_2 = 1$. By the same argument involving the variable transformation $x_j \mapsto -x_j$ for some $j \in \{1,\ldots,n\}$ as before (proof of Theorem \ref{Thm:Positiv}, Lemma \ref{Lem:SOSTranformationInvariant}) it suffices to investigate the case $c = -\Theta_f$. 
 Consider the following linear transformation of the variables $x_1,\dots,x_n$.
$$T : (x_1,\dots,x_n) \mapsto \left((e^{s^*})_1 x_1,\dots,(e^{s^*})_nx_n\right),$$
where $(e^{s^*})_j$ denotes the $j$-th coordinate of the global minimizer $e^{\mathbf{s}^*}$ of $f$, see Proposition \ref{Prop:GlobalMinimizer} and proof of Theorem \ref{Thm:Positiv}. By Lemma \ref{Lem:SOSTranformationInvariant}, $f \in \Sigma_{n,2d}$ if and only if $f(T(\mathbf{x})) \in \Sigma_{n,2d}$, where
\begin{eqnarray}
	f(T(\mathbf{x})) & = & \lambda_0 + \sum_{j=1}^n \lambda_j\mathbf{x}^{\alpha(j)} - \mathbf{x}^y.\label{Equ:SOSAgiform}
\end{eqnarray}
But $f(T(\mathbf{x}))$ is the dehomogenization of an agiform and, therefore, by Theorem \ref{thm:rezmain}, $f \in \Sigma_{n,2d}^y$ if and only if $y \in \Delta^*$.\\

If $y \in (2\mathbb N)^n$, then we use the same argument to prove that $f$ is a sum of squares for $c = -\Theta_f$. For $c > -\Theta_f$, the polynomial $f$ is obviously a sum of squares, since the inner monomial can be written as $-\Theta_f \mathbf{x}^y$ plus the term $(c + \Theta_f)\mathbf{x}^y$, which is a square.

In \cite[Theorem 4.4]{Reznick:AGI} it is shown that the agiforms in \eqref{Equ:SOSAgiform} are sums of \emph{binomial} squares. Thus, for $y \in \Delta^*$, the nonnegative polynomials $f \in P_{n,2d}^y$ are also sums of binomial squares, since the binomial structure is preserved under the variable transformation $T$.
\end{proof}

Agiforms can be recovered by setting $b_j = \lambda_j$ and, hence, Theorems \ref{Thm:Positiv} and \ref{thm:sos} generalize results for agiforms in \cite{Reznick:AGI}. Furthermore, by setting $\alpha(j) = 2d \cdot e_j$ for $1\leq j\leq n$, we recover the dehomogenized version of what is called an \emph{elementary diagonal minus tail form} in \cite{Fidalgo:Kovacec}, and, again, Theorems \ref{Thm:Positiv} and \ref{thm:sos} generalize one of the main results in \cite{Fidalgo:Kovacec} to arbitrary simplices. 

We remark that in \cite{Reznick:AGI} an algorithm is given to compute such a sum of squares representation in the case of agiforms in Theorem \ref{thm:sos}, which can be generalized to arbitrary circuit polynomials. Furthermore, in \cite{Reznick:AGI} it is shown that every agiform in $\Sigma_{n,2d}^y$ can be written as a sum of $|L\setminus\hat\Delta|$ binomial squares. By using the variable transformation $T$ in the proof of Theorem \ref{thm:sos}, we conclude that a general circuit polynomial $f\in \Sigma_{n,2d}^y$ also can be written as a sum of $|L\setminus\hat\Delta| = |L| - (n + 1)$ binomial squares.\\

Theorem \ref{thm:sos} also comes with two immediate corollaries.

\begin{cor}
 Let $\Delta$ be an $H$-simplex and $f \in P_{\Delta}^y$. Then $f \in P_{n,2d}^y$ if and only if $f \in \Sigma_{n,2d}^y$.
\end{cor}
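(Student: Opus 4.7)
The plan is to observe that this corollary is an immediate consequence of Theorem \ref{thm:sos} combined with the definition of an $H$-simplex, so the work is essentially bookkeeping rather than a new argument.

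First I would dispose of the easy direction: every sum of squares is pointwise nonnegative, so $\Sigma_{n,2d}^y \subseteq P_{n,2d}^y$ with no reference to the hypothesis that $\Delta$ is an $H$-simplex.

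For the reverse inclusion, I would start from an arbitrary $f \in P_{n,2d}^y$. By the setup of the class $P_\Delta^y$, the interior exponent $y$ lies in $\Int(\Delta) \cap \Z^n$, hence in particular in $\Delta \cap \Z^n$. The $H$-simplex hypothesis means precisely that $\Delta^* = \Delta \cap \Z^n$, so automatically $y \in \Delta^*$. Theorem \ref{thm:sos} then applies and yields $f \in \Sigma_{n,2d}^y$, regardless of the sign of $c$ or whether $y \in (2\N)^n$, since the criterion $y \in \Delta^*$ is met.

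There is no genuine obstacle here; all the real content lives in Theorem \ref{thm:sos}, whose hard direction (nonnegativity plus $y \in \Delta^*$ implies SOS) was reduced to Reznick's agiform theorem via the variable change $T$ involving the global norm minimizer $e^{\mathbf{s}^*}$. The only thing one has to be careful about is that $y$ is assumed to be interior to $\Delta$ in the definition of $P_\Delta^y$, so the inclusion $y \in \Delta \cap \Z^n$ is immediate and does not require any boundary discussion.
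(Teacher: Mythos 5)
Your argument is correct and coincides with the paper's own proof: the $H$-simplex hypothesis gives $\Delta^* = \Delta \cap \Z^n$, so $y \in \Int(\Delta) \cap \Z^n$ automatically lies in $\Delta^*$, and Theorem~\ref{thm:sos} does the rest (with the reverse inclusion being the trivial fact that sums of squares are nonnegative). Nothing is missing.
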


\begin{proof}
 Since $\Delta$ is an $H$-simplex, it holds that $\Delta^* = (\Delta\cap \Z^n)$ (see Section \ref{SubSec:Agiforms}) and we always have $y \in \Delta^*$. \ 
\end{proof}

The second corollary concerns sums of squares relaxations for minimizing polynomial functions. For this, note that the quantity $f_{sos}^* = \max\{\lambda : f - \lambda \in \Sigma_{n,2d}\}$ is a lower bound for $f^* = \min\{f(\mathbf{x}) : \mathbf{x} \in \R^n\}$, see for example \cite{Lasserre:Buch}.

\begin{cor}
\label{cor:relaxation}
 Let $f \in P_{\Delta}^y$. Then $f_{sos}^* = f^*$ if and only if  $y \in \Delta^*$.

\end{cor}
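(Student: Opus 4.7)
The plan is to use Theorem \ref{thm:sos} as a black box applied to the shifts $f-\lambda$. The key structural observation is that for any $\lambda<b_0$ the polynomial $f-\lambda$ still belongs to $P_\Delta^y$: only the constant coefficient changes, from $b_0$ to $b_0-\lambda>0$, while the exponent data $\Delta$, $y$, the coefficient $c$ and the other $b_j$ are untouched. In particular the combinatorial criterion \textquotedblleft$y\in\Delta^*$\textquotedblright{} of Theorem \ref{thm:sos} either holds for every such shift simultaneously or for none of them. The trivial half $f_{sos}^*\le f^*$ is immediate since sums of squares are nonnegative.

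For the direction $(\Leftarrow)$, I would assume $y\in\Delta^*$ and set $\lambda=f^*$. By definition $f-f^*\ge 0$, and by Theorem \ref{Thm:Positiv}, in the non-trivial regime (i.e.\ $f$ is not already a sum of monomial squares) one has $f^*<b_0$ -- for instance, evaluating at a point on a coordinate axis where $c\,\mathbf{x}^y<0$ already shows $f(\mathbf{x})<b_0$. Hence $f-f^*\in P_{n,2d}^y$, and Theorem \ref{thm:sos} applied with $y\in\Delta^*$ yields $f-f^*\in\Sigma_{n,2d}$. This proves $f_{sos}^*\ge f^*$, and combined with the trivial inequality gives $f_{sos}^*=f^*$.

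For the converse $(\Rightarrow)$, I would use that $\Sigma_{n,2d}$ is a closed convex cone, so the set $\{\lambda:f-\lambda\in\Sigma_{n,2d}\}$ is closed and bounded above by $f^*$; its supremum $f_{sos}^*$ is therefore attained, and the assumption $f_{sos}^*=f^*$ means $f-f^*\in\Sigma_{n,2d}^y$. Theorem \ref{thm:sos} in the contrapositive then forces $y\in\Delta^*$. The main delicacy to watch is precisely the edge case $f^*=b_0$, where the constant term of $f-f^*$ vanishes and the support of the shifted polynomial degenerates; but this happens only when $f$ is already a sum of monomial squares (so $c>0$ and $y\in(2\mathbb N)^n$), in which case $f^*=b_0=f_{sos}^*$ holds automatically and the equivalence with $y\in\Delta^*$ must be regarded as a separate, essentially vacuous, case to be addressed up front. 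Outside this degeneracy the proof is a direct invocation of Theorem \ref{thm:sos}.
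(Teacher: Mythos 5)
Your proof is correct and follows essentially the same route as the paper: both reduce the claim to the observation that $f_{sos}^* = f^*$ iff $f - f^* \in \Sigma_{n,2d}^y$ and then invoke Theorem \ref{thm:sos}, since subtracting $f^*$ only changes the constant term and hence does not affect whether $y \in \Delta^*$. You are in fact more careful than the paper's two-line proof, which silently passes over the attainment of the supremum and the degenerate sum-of-monomial-squares case (where the stated equivalence can fail for $y \notin \Delta^*$); the only blemish is your parenthetical justification of $f^* < b_0$ via a point on a coordinate axis --- $\mathbf{x}^y$ vanishes there for $n \geq 2$ because interior points $y$ have all coordinates positive --- but the claim itself is true and easily repaired by evaluating along a suitable curve $e^{t\mathbf{w}}$ in an orthant where $c\,\mathbf{x}^y < 0$.
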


\begin{proof}
 We have $f_{sos}^* = f^*$ if and only if $f - f^* \in \Sigma_{n,2d}$. However, subtracting the minimum of the polynomial $f$ does not affect the question whether $y\in\Delta^*$ or not. Hence, if $y\in\Delta^*$, this will also hold for the nonnegative polynomial $f - f^*$ and vice versa.
\end{proof}

 As an extension, we consider in the following the case of multiple support points, which are interior lattice points in the simplex $\Delta = \conv\{0,\alp(1),\ldots,\alp(n)\}$. Assume that all interior monomials come with a negative coefficient. Then we can write the polynomial as a sum of nonnegative circuit polynomials if and only if it is nonnegative. Furthermore, we get an equivalence between nonnegativity and sums of squares if the whole support is contained in $\Delta^*$. In the following, let $\{\lambda_0^{(i)},\dots,\lambda_{n}^{(i)}\}$ be the (unique) convex combination of $y(i) \in I \subseteq (\Int(\Delta) \cap \N^{n})$ and scale such that $b_0 = \sum_{j=1}^{|I|} \lambda_0^{(j)}$.

\begin{thm}\label{thm:multiple}
 Let $f = \sum_{j=1}^{|I|} \lambda_0^{(j)} + \sum_{j=1}^n b_j \mathbf{x}^{\alpha(j)} - \sum_{y(i)\in I}^{} a_i\mathbf{x}^{y(i)}$ such that $\New(f) = \Delta = \conv\{0,\alpha(1),\dots,\alpha(n)\}$ is a simplex with $\alpha(j) \in (2\N)^n$, all $a_i, b_j > 0$ and $I \subseteq (\Int(\Delta)\cap \N^n)$.
Then
\begin{eqnarray*}
	f\in P_{n,2d} \ \text{ if and only if } \ f \ = \ \sum_{i = 1}^{|I|} E_{y(i)},
\end{eqnarray*}
where all $E_{y(i)} \in P_{\Delta(i)}^{y(i)}$ are nonnegative with support sets $\Delta(i) \subseteq \{0,\alp(1),\ldots,\alp(n),y(i)\}$.

If furthermore $I \subseteq \Delta^*$, then we have
\begin{eqnarray}
	f\in P_{n,2d} & \text{ if and only if } & f\in \Sigma_{n,2d} \label{Equ:PSDequivSOS} \\
	              &	\text{ if and only if } & f \text{ is a sum of binomial squares}. \nonumber
\end{eqnarray}
Particularly, \eqref{Equ:PSDequivSOS} always holds if $\Delta$ is an $H$-simplex.
\end{thm}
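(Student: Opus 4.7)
The plan is to reduce the decomposition statement to the single-circuit criterion of Theorem \ref{Thm:Positiv}, then cascade into Theorem \ref{thm:sos} for the sums-of-squares conclusion. The ``$\Leftarrow$'' direction of the first equivalence is immediate, since a sum of nonnegative polynomials is nonnegative. For the converse, the goal is to produce, given $f \in P_{n,2d}$, nonnegative coefficients $b_j^{(i)}$ with $\sum_{i=1}^{|I|} b_j^{(i)} = b_j$ for every $0 \leq j \leq n$, such that each $E_{y(i)} := \sum_{j=0}^{n} b_j^{(i)} \mathbf{x}^{\alpha(j)} - a_i \mathbf{x}^{y(i)}$ satisfies $a_i \leq \Theta_{E_{y(i)}}$. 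By Theorem \ref{Thm:Positiv} this forces $E_{y(i)} \in P_{n,2d}^{y(i)}$, and summing recovers $f$.

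To produce the $b_j^{(i)}$, I would extend the norm-minimizer method from Section \ref{Sec:Nonnegative}. The sign/norm-relaxation argument of Theorem \ref{Thm:Positiv} (which applies here, as all vertex exponents are even and every interior monomial carries a negative coefficient) reduces the problem to the inequality $f(e^{\mathbf{w}}) \geq 0$ for all $\mathbf{w} \in \R^n$. Next, I would scale to the boundary of the nonnegativity cone by setting
$$
\mu^* \;=\; \sup\Bigl\{\mu \geq 1 \;:\; b_0 + \sum_{j=1}^n b_j \mathbf{x}^{\alpha(j)} - \mu \sum_{y(i) \in I} a_i \mathbf{x}^{y(i)} \;\geq\; 0 \text{ on } \R^n\Bigr\},
$$
and produce $\mathbf{s}^* \in \R^n$ with $f_{\mu^*}(e^{\mathbf{s}^*}) = 0$ via a nested compactness argument: the sublevel sets $\{f_\mu(e^{\mathbf{w}}) \leq 0\}$ for $\mu > \mu^*$ are closed, nested, and uniformly bounded because every $y(i)$ is an interior convex combination of the $\alpha(j)$'s, so the positive vertex monomials dominate $f_\mu(e^{\mathbf{w}})$ in every unbounded direction. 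Define $\gamma_i = \mu^* a_i e^{\langle \mathbf{s}^*, y(i) \rangle}$ and $b_j^{(i)} = \gamma_i \lambda_j^{(i)} e^{-\langle \mathbf{s}^*, \alpha(j) \rangle}$.

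Two computations then close the loop. Since $y(i) = \sum_{j=0}^n \lambda_j^{(i)} \alpha(j)$ and $\sum_j \lambda_j^{(i)} = 1$, the circuit number is
$$
\Theta_{E_{y(i)}} \;=\; \prod_{j=0}^{n} \bigl(b_j^{(i)}/\lambda_j^{(i)}\bigr)^{\lambda_j^{(i)}} \;=\; \gamma_i \, e^{-\langle \mathbf{s}^*, y(i) \rangle} \;=\; \mu^* a_i \;\geq\; a_i,
$$
so $E_{y(i)} \in P_{n,2d}^{y(i)}$. The first-order condition $\nabla f_{\mu^*}(e^{\mathbf{s}^*}) = 0$ combined with the linear independence of $\alpha(1), \ldots, \alpha(n)$ gives $\sum_i b_j^{(i)} = b_j$ for $j \geq 1$, and the vanishing $f_{\mu^*}(e^{\mathbf{s}^*}) = 0$ collapses to $\sum_i b_0^{(i)} = b_0$. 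Hence $\sum_i E_{y(i)} = f$.

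For the second equivalence, each $\Delta(i)$ shares its vertex set with $\Delta$, so the mediated hull satisfies $\Delta(i)^* = \Delta^*$; the hypothesis $I \subseteq \Delta^*$ then yields $y(i) \in \Delta(i)^*$ for every $i$, and Theorem \ref{thm:sos} writes each $E_{y(i)}$ as a sum of binomial squares. Summing gives $f$ as a sum of binomial squares, the reverse implication is trivial, and the $H$-simplex case is automatic since $\Delta^* = \Delta \cap \Z^n$. The main obstacle throughout is the existence of the finite minimizer $\mathbf{s}^*$, since the infimum of $f(e^{\mathbf{w}})$ on $\R^n$ need not be attained in general; this is precisely why one introduces the scaling parameter $\mu^*$ and the nested compactness argument, rather than attempting to minimize $f$ directly.
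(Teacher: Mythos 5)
Your proof is correct, and while it shares the paper's core idea --- split each vertex coefficient $b_j$ among the interior points $y(i)$ in proportion to the barycentric weights $\lambda_j^{(i)}$ evaluated at a distinguished critical point, then invoke Theorem \ref{Thm:Positiv} for each summand and Theorem \ref{thm:sos} for the SOS conclusion --- the technical execution is genuinely different. The paper takes the distinguished point to be a global minimizer $\mathbf{v}\in\R^n_{>0}$ of $f$ itself (Lemma \ref{lem:minimizer}), keeps the constant coefficients of the summands fixed at $\lambda_0^{(i)}$, first reduces to the standard form $\alpha(j)=\alpha_j e_j$, and then transports the decomposition back via Proposition \ref{Prop:StandardForm}. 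You instead rescale the negative part by $\mu^*$ to reach $\partial P_{n,2d}$, extract a zero $e^{\mathbf{s}^*}$ of the boundary polynomial by a nested-compactness argument, and split \emph{all} coefficients (constant term included) as $b_j^{(i)}=\gamma_i\lambda_j^{(i)}e^{-\langle\mathbf{s}^*,\alpha(j)\rangle}$; your computations of $\Theta_{E_{y(i)}}=\mu^* a_i\geq a_i$, of $\sum_i b_j^{(i)}=b_j$ from the first-order conditions and linear independence of $\alpha(1),\dots,\alpha(n)$, and of $\sum_i b_0^{(i)}=b_0$ from $f_{\mu^*}(e^{\mathbf{s}^*})=0$ all check out. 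Your route buys two things: the nonnegativity of each $E_{y(i)}$ becomes an explicit identity rather than an appeal to $\mathbf{v}$ being a common minimizer, and you avoid relying on $f$ attaining its infimum (which is not automatic for non-coercive nonnegative polynomials and is the least transparent step of Lemma \ref{lem:minimizer}); you also work with general $\alpha(j)$ directly, skipping the standard-form detour. What the paper's version buys is that all summands visibly share the common minimizer $\mathbf{v}$ of $f$, a feature exploited in the follow-up geometric-programming work. The SOS half of your argument ($\Delta(i)^*=\Delta^*$ since the vertex sets agree, then Theorem \ref{thm:sos} termwise) coincides with the paper's.
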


Again, we get an immediate corollary.
\begin{cor}
 Let $f$ be as above with $I \subseteq \Delta^*$. Then $f_{sos}^* = f^*$.
\end{cor}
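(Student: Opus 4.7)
The plan is to follow the blueprint of Corollary \ref{cor:relaxation}, replacing Theorem \ref{thm:sos} by Theorem \ref{thm:multiple} at the crucial step. By definition of $f_{sos}^*$ and $f^*$ we always have $f_{sos}^* \leq f^*$, so the content of the corollary is the reverse inequality, which amounts to exhibiting an SOS certificate for $f - f^*$.

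Set $g := f - f^*$ and observe that shifting by a scalar only alters the constant coefficient: explicitly, $g = \big(\sum_{j=1}^{|I|} \lam_0^{(j)} - f^*\big) + \sum_{j=1}^n b_j \mathbf{x}^{\alp(j)} - \sum_{y(i) \in I} a_i \mathbf{x}^{y(i)}$. Since $f^* = \min f \leq f(0) = \sum_j \lam_0^{(j)}$, the new constant is nonnegative. All remaining coefficients ($b_j > 0$ at the vertices, $-a_i < 0$ at the interior points), the Newton polytope $\Delta$, and the interior set $I$ are unchanged, so in particular $I \subseteq \Delta^*$ still holds.

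To apply Theorem \ref{thm:multiple} uniformly (sidestepping the degenerate case where $g(0) = 0$), I would perturb slightly. For $\eps > 0$ set $g_\eps := g + \eps$. Then $g_\eps \geq \eps > 0$, so $g_\eps$ is nonnegative, and its constant $\sum_j \lam_0^{(j)} - f^* + \eps$ is strictly positive. After rescaling $g_\eps$ by a positive scalar (which does not affect SOS membership) so that the constant term equals $\sum_{j=1}^{|I|} \lam_0^{(j)}$, as demanded by the normalization convention in Theorem \ref{thm:multiple}, the hypotheses of that theorem are satisfied with $I \subseteq \Delta^*$. It follows that $g_\eps \in \Sigma_{n,2d}$.

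Finally I would let $\eps \to 0^+$. Since the SOS cone $\Sigma_{n,2d}$ is a closed convex cone in the finite-dimensional vector space $\R[\mathbf{x}]_{2d}$, the limit $g = \lim_{\eps \to 0^+} g_\eps$ also lies in $\Sigma_{n,2d}$. Hence $f - f^* \in \Sigma_{n,2d}$, yielding $f_{sos}^* \geq f^*$ and therefore equality. The only delicate point is the possible degeneracy $f^* = f(0)$, in which $g$ itself has vanishing constant term and Theorem \ref{thm:multiple} does not literally apply; this is the main (and minor) obstacle, handled uniformly by the $\eps$-perturbation together with closedness of $\Sigma_{n,2d}$.
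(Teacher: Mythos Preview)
Your proof is correct and follows the same route as the paper's (implicit) argument: subtract $f^*$, observe that the resulting polynomial still has the shape required by Theorem~\ref{thm:multiple} with $I \subseteq \Delta^*$, and conclude that $f-f^*$ is SOS.

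One simplification: the $\eps$-perturbation is unnecessary. Under the standing hypotheses ($a_i,b_j>0$), Lemma~\ref{lem:minimizer} shows that every global minimizer of $f$ lies in $\R_{>0}^n$; in particular the origin is \emph{not} a minimizer, so $f^* < f(0)=\sum_j \lambda_0^{(j)}$ and $g=f-f^*$ already has strictly positive constant term. Hence Theorem~\ref{thm:multiple} applies to $g$ directly after the usual rescaling, with no limiting argument needed. This is why the paper treats the corollary as immediate.
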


In order to prove Theorem \ref{thm:multiple}, we need the following lemma.

\begin{lemma}
\label{lem:minimizer}
 Let $f = b_0 + \sum_{j=1}^n b_j \mathbf{x}^{\alpha(j)} - \sum_{y(i)\in I}^{} a_i\mathbf{x}^{y(i)}$ be nonnegative with simplex Newton polytope $\New(f) = \Delta = \conv\{0,\alpha(1),\dots,\alpha(n)\}$ for some $\alpha(j) \in (2\N)^n$. Furthermore, let $I \subseteq (\Int(\Delta)\cap \N^n)$ and $a_i, b_j > 0$. Then $f$ has a global minimizer $\mathbf{v}^* \in \R_{>0}^n$.
\end{lemma}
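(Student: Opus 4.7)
The plan is to combine a norm-relaxation reduction (as in Propositions \ref{Prop:ExtremalPoint} and \ref{Prop:GlobalMinimizer}) with a boundary analysis that exploits the interior condition on $I$ (assume $I\neq\emptyset$; otherwise $f$ is a sum of monomial squares whose only minimizer is $\mathbf 0$). The key structural observation is that, since $\Delta$ is a full-dimensional simplex with vertex $\mathbf 0$, every $y(i)\in\Int(\Delta)\cap\N^n$ satisfies $y(i)_k\geq 1$ for each coordinate $k$: writing $y(i)=\sum_{j\geq 1}\lambda_j^{(i)}\alpha(j)$ with all $\lambda_j^{(i)}>0$, a zero coordinate of $y(i)$ would force $\alpha(j)_k=0$ for every $j\geq 1$, contradicting full-dimensionality. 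Combined with the evenness of the $\alpha(j)$, the inequality $\mathbf{x}^{y(i)}\leq|\mathbf{x}|^{y(i)}$ yields $f(\mathbf{x})\geq f(|\mathbf{x}|)$ for every $\mathbf{x}\in\R^n$, so it suffices to produce a minimizer on $\R_{\geq 0}^n$. Moreover, on every facet $\{x_k=0\}$ of $\R_{\geq 0}^n$ the interior monomials $\mathbf{x}^{y(i)}$ vanish while the vertex ones stay nonnegative, giving the boundary estimate $f|_{\partial\R_{\geq 0}^n}\geq b_0$.

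Next I would exhibit $\mathbf{x}\in\R_{>0}^n$ with $f(\mathbf{x})<b_0$. By linear independence of $\alpha(1),\ldots,\alpha(n)$ there exists $\mathbf{s}\in\R^n$ with $\langle\mathbf{s},\alpha(j)\rangle=1$ for all $j$; for the Puiseux curve $\mathbf{x}(t)=(t^{s_1},\ldots,t^{s_n})\in\R_{>0}^n$ one has $\langle\mathbf{s},y(i)\rangle=1-\lambda_0^{(i)}<1$, so
$$f(\mathbf{x}(t))-b_0 \;=\; \Big(\textstyle\sum_j b_j\Big)\, t \;-\; \sum_i a_i\, t^{\,1-\lambda_0^{(i)}}.$$
Setting $\lambda_0^{\max}:=\max_i\lambda_0^{(i)}$, the lowest-order term on the right is $-\big(\sum_{i:\lambda_0^{(i)}=\lambda_0^{\max}}a_i\big)t^{\,1-\lambda_0^{\max}}$, which is strictly negative as $t\to 0^+$, so $\inf_{\R_{>0}^n}f<b_0$.

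The main work is to show that the sublevel set $\{F\leq c\}\subset\R^n$ is compact for $c\in(\inf F,\,b_0)$, where $F(\mathbf{w}):=f(e^{\mathbf{w}})$. Assume otherwise, pick $\mathbf{w}^{(k)}\in\{F\leq c\}$ with $\|\mathbf{w}^{(k)}\|\to\infty$, and extract a limit direction $\mathbf{u}\in S^{n-1}$ of $\mathbf{w}^{(k)}/\|\mathbf{w}^{(k)}\|$. If $c^*:=\max_j\langle\mathbf{u},\alpha(j)\rangle>0$, then $\langle\mathbf{u},y(i)\rangle=\sum_{j\geq 1}\lambda_j^{(i)}\langle\mathbf{u},\alpha(j)\rangle\leq(1-\lambda_0^{(i)})c^*<c^*$, so the vertex exponential attaining $c^*$ grows strictly faster than every interior exponential and $F(\mathbf{w}^{(k)})\to+\infty$. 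Otherwise $\langle\mathbf{u},\alpha(j)\rangle\leq 0$ for all $j$; since the $\alpha(j)$ span $\R^n$ and $\mathbf{u}\neq 0$, some $\langle\mathbf{u},\alpha(j_0)\rangle<0$, and $\lambda_{j_0}^{(i)}>0$ then forces $\langle\mathbf{u},y(i)\rangle<0$ for every $i$. The interior exponentials decay to $0$ while the vertex ones stay nonnegative, giving $\liminf F(\mathbf{w}^{(k)})\geq b_0>c$. Both cases contradict $F(\mathbf{w}^{(k)})\leq c$, so $\{F\leq c\}$ is bounded and hence, by continuity of $F$, compact.

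Compactness forces $F$ to attain its infimum at some $\mathbf{w}^*\in\R^n$, and then $\mathbf{v}^*:=e^{\mathbf{w}^*}\in\R_{>0}^n$ satisfies $f(\mathbf{v}^*)=\inf_{\R_{>0}^n}f<b_0$. Since $f\geq b_0$ on $\partial\R_{\geq 0}^n$ and $f(\mathbf{x})\geq f(|\mathbf{x}|)$ everywhere, $\mathbf{v}^*$ is in fact a global minimizer of $f$ on all of $\R^n$, as required. The hardest part of this plan is the compactness statement in the third paragraph: the standard coercivity argument fails because $F$ does \emph{not} tend to $+\infty$ in every direction, and one must instead use the full barycentric inequality $\langle\mathbf{u},y(i)\rangle\leq(1-\lambda_0^{(i)})\max_j\langle\mathbf{u},\alpha(j)\rangle$, together with $\lambda_0^{(i)}>0$, to bound the interior exponentials strictly below the relevant vertex ones in both asymptotic regimes.
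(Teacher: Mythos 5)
Your proof is correct, but it is organized quite differently from the paper's. The only step you share with the paper is the reduction $f(\mathbf{x})\geq f(|\mathbf{x}|)$, which moves the problem to the closed positive orthant. From there the paper simply asserts that a global minimizer of $f$ on $\R^n$ exists (on the grounds that the vertex terms are monomial squares) and then rules out a minimizer with a zero coordinate by comparing $f$ with a single-interior-term circuit polynomial $g$ and invoking the uniqueness of the extremal point of $g(e^{\mathbf{w}})$ from Proposition \ref{Prop:ExtremalPoint}. You instead prove both points directly: existence via compactness of the sublevel sets $\{f(e^{\mathbf{w}})\leq c\}$ for $c<b_0$, obtained from the limit-direction dichotomy together with the barycentric estimate $\langle\mathbf{u},y(i)\rangle\leq(1-\lambda_0^{(i)})\max_j\langle\mathbf{u},\alpha(j)\rangle$; and positivity of the minimizer via the boundary bound $f|_{\partial\R_{\geq0}^n}\geq b_0$ (using $y(i)\in(\N^*)^n$) combined with the curve $t\mapsto(t^{s_1},\dots,t^{s_n})$ showing $\inf f<b_0$. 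What your route buys is rigor at exactly the two places where the paper is terse: since $f$ need not be coercive (vertex monomials can all vanish along a coordinate subspace), the attainment of the infimum genuinely requires an argument like your directional analysis, and your explicit $\inf f<b_0$ estimate replaces the paper's comparison with $g$, which tacitly needs the analogous fact $\inf_{\R_{>0}^n}g<b_0$. The one caveat is cosmetic: as you note, for $I=\emptyset$ the statement degenerates (the unique minimizer is the origin), so your standing assumption $I\neq\emptyset$ matches the intended use of the lemma.
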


\begin{proof}
Since all $b_j > 0$ and $\alp(j) \in (2\N)^n$, clearly $f$ has a global minimizer on $\R^n$. Assume that all global minimizers are not contained in $\R_{\geq 0}^n$. We make a term by term inspection for a minimizer $\mathbf{v}$ in comparison with $|\mathbf{v}| = (|v_1|,\ldots,|v_n|)$: For every vertex of $\Delta$ we have $b_j \mathbf{v}^{\alpha(j)} = b_j |\mathbf{v}^{\alpha(j)}|$; for every interior point we have $-a_i |\mathbf{v}|^{y(i)} \leq -a_i \mathbf{v}^{y(i)}$ and hence $f(\mathbf{v}) \geq f(|\mathbf{v}|)$. This is a contradiction and therefore at least one global minimizer $\mathbf{v}^*$ is contained in $\R^n_{\geq 0}$.

Assume that for at least one component $v_j^*$ of $\mathbf{v}^*$ it holds that $v_j^* = 0$. We define $g = b_0 + \sum_{j = 1}^n b_j \mathbf{x}^{\alp(j)} - a_i \mathbf{x}^{y(i)}$ for one $y(i) \in I$. By Proposition \ref{Prop:ExtremalPoint}, $g(e^{\mathbf{w}})$ has a unique global minimizer on $\R^n$ and hence $g$ has a unique global minimizer  on $\R_{> 0}^n$. But, by construction of $f$ and $g$, we have $f(\mathbf{x}) < g(\mathbf{x})$ for all $\mathbf{x} \in \R_{> 0}^n$ and $f(\mathbf{x}) = g(\mathbf{x})$ for $\mathbf{x} \in \R_{\geq 0}^n \setminus \R_{> 0}^n$. Thus, $v_j^* \neq 0$ for all $1 \leq j \leq n$.
\end{proof}

\begin{proof}{(Proof of Theorem \ref{thm:multiple})} Let $f = \sum_{j=1}^{|I|} \lambda_0^{(j)} + \sum_{j=1}^n b_j\mathbf{x}^{\alpha(j)} - \sum_{y(i)\in I}^{} a_i\mathbf{x}^{y(i)}$ be nonnegative and, by Lemma \ref{lem:minimizer}, let $\mathbf{v} \in \R^n_{> 0}$ be a global minimizer of $f$. 

First, we investigate the case $\alp(j) = \alp_j e_j$ for some $\alp_j \in 2\N^*$ and $e_j$ denoting the $j$-th standard vector. For any $1 \leq k \leq n$ we have
\begin{eqnarray}
\left(x_k \frac{\partial f}{\partial x_k}\right)(\mathbf{v}) & = & 
b_k \cdot\alpha(k)_k \cdot v_k^{\alpha_k} - \sum_{y(i) \in I}a_i\cdot y(i)_k \cdot \textbf{v}^{y(i)}
\ = \ 0. \label{Equ:DerivativeMultiple}
\end{eqnarray}
Let, again, $\lambda_0^{(i)},\dots,\lambda_{n}^{(i)}$ be the coefficients of the unique convex combination of $y(i) \in I$ and $\lam^{(i)} = (\lam_1^{(i)},\ldots,\lam_n^{(i)}) \in \R_{> 0}^n$. For $y(i) \in I$ we define 
\begin{eqnarray}
	b_{y(i),k} & = & \frac{a_i\cdot \lambda_k^{(i)}\cdot \mathbf{v}^{y(i)}}{\mathbf{v}^{\alpha(k)}}.
\label{Equ:Defby(i)}
\end{eqnarray}
Since for all $i$ and all $k$ it holds that $\sum_{j = 1}^n \lam^{(i)}_k \alp(j)_k = y(i)_k$ and that all $\alp(j)_k = 0$ unless $j = k$, we obtain with \eqref{Equ:DerivativeMultiple} that
\begin{eqnarray*}
	b_k & = & \sum_{y(i)\in I} b_{y(i),k}. 
\end{eqnarray*}
 By Proposition \ref{Prop:GlobalMinimizer} and Theorem \ref{Thm:Positiv}, we conclude that
\begin{eqnarray*}
	E_{y(i)}(\mathbf{x}) & = & \lambda_0^{(i)} + \sum_{k=1}^n b_{y(i),k}x_k^{\alpha_k} - a_i \mathbf{x}^{y(i)} 
\end{eqnarray*}
is a nonnegative circuit polynomial and has its minimum value at $\mathbf{v}$. We obtain
\begin{eqnarray}
 f(\mathbf{x}) &=& \sum_{j=1}^{|I|} \lambda_0^{(j)} + \sum_{k=1}^n b_k x_k^{\alpha_k} - \sum_{y(i)\in I}a_i \mathbf{x}^{y(i)} \nonumber\\
 &=& \sum_{j=1}^{|I|} \lambda_0^{(j)} + \sum_{k=1}^n \left(\sum_{y(i)\in I} b_{y(i), k}\right)x_k^{\alpha_k} - \sum_{y(i) \in I}a_i \mathbf{x}^{y(i)} \label{Equ:GeneralSimplexSONCDecomp}\\
 &=&  \sum_{y(i) \in I} E_{y(i)}(\mathbf{x}). \nonumber
\end{eqnarray}

Now, we consider the case of arbitrary $\alp(j) \in (2\N)^n$. Let $\mathbf{v} \in \R_{> 0}^n$ be a global minimizer of $f$.
By Corollary \ref{Cor:StandardFormGeneralSimplex} (and Proposition \ref{Prop:StandardForm}) there exists a unique polynomial $g$ satisfying 
\begin{eqnarray}
	f(e^{\mathbf{w}}) & = & g(e^{T^t \mathbf{w}}) \ \text{ for all } \ \mathbf{w} \in \R^n \label{Equ:TransformationGeneralSimplex}
\end{eqnarray}
such that $T \in GL_n(\R)$ and $g$ has a support matrix 
\begin{eqnarray*}
M^{A'} & = & \left(
\begin{array}{cccccccc}
1 & 1 & \cdots & \cdots & 1 & 1 & \cdots & 1\\
0 & \mu & 0 & \cdots & 0 & \mu \lam_1^{(1)} & \cdots & \mu \lam_1^{(|I|)} \\
\vdots & 0 & \ddots & & \vdots & \vdots & \cdots & \vdots \\
\vdots & \vdots & & \ddots & 0 & \vdots & \cdots & \vdots \\
0 & 0 & \cdots & 0 & \mu & \mu \lam_n^{(1)} & \cdots & \mu \lam_n^{(|I|)} \\
\end{array}\right)
\ \in \ \Mat(\Z,(n+1) \times (n+ |I|)),
\end{eqnarray*}
where $\mu$ is the least common multiple of the denominators of all $\lam_j^{(i)}$ and 2 (since vertices of $\New(g)$ shall be in $(2\N)^n$).

Since $\mathbf{v} \in \R_{> 0}^n$, we can define $\Log|\mathbf{v}'| = T^{t} \Log|\mathbf{v}|$. By 
\eqref{Equ:GeneralSimplexSONCDecomp} and \eqref{Equ:TransformationGeneralSimplex} it follows that $\mathbf{v}'$ is a global minimizer for $g$ and thus we have
\begin{eqnarray*}
	f(\mathbf{v}) \ = \ f(e^{\Log|\mathbf{v}|}) \ = \ g(e^{T^{t}\Log|\mathbf{v}|}) \ = \ \sum_{i = 1}^{|I|} E_{\mu \lam^{(i)}}(e^{\Log|\mathbf{v}'|}),
\end{eqnarray*}
for some nonnegative circuit polynomials $E_{\mu \lam^{(i)}}$ with global minimizer $\mathbf{v}' \in \R_{> 0}^n$.

Since $\supp(E_{\mu \lam^{(i)}}) \subseteq \supp(g)$ and $\New(E_{\mu \lam^{(i)}}) = \New(g)$, we have, by Proposition \ref{Prop:GlobalMinimizer},
\begin{eqnarray*}
	E_{\mu \lam^{(i)}}(e^{\Log|\mathbf{v}'|}) \ = \ E_{y(i)}(e^{\Log|\mathbf{v}|})
\end{eqnarray*}
such that each $E_{y(i)}(e^{\Log|\mathbf{v}|})$ is a nonnegative circuit polynomial with global minimizer $\mathbf{v}$ and support set  $\{0,\alp(1),\ldots,\alp(n),y(i)\}$ satisfying $f = \sum_{i = 1}^{|I|} E_{y(i)}$.\\

If, additionally, every $y(i) \in \Delta^*$ (for example if $\Delta$ is an $H$-simplex), then we know by Theorem \ref{Thm:MainSOS} that all $E_{y(i)}(\mathbf{x})$ are sums of (binomial) squares and, hence, $f$ is a sum of (binomial) squares.
\end{proof}

Note that Theorem \ref{thm:multiple} generalizes \cite[Theorem 2.7]{Fidalgo:Kovacec}, where an analog statement is shown for the special case of \emph{diagonal minus tail} forms $f$, which are given by $\alpha(j) = 2d$ for $1 \leq j\leq n$.

We remark that the correct decomposition of the $b_j$ in Theorem \ref{thm:multiple} for the case of a general simplex Newton polytope is also given by \eqref{Equ:Defby(i)}, since due to 
\begin{eqnarray*}
e^{\langle \Log|\mathbf{v}|,y(i) - \alp(j) \rangle} \ = \ e^{\langle (T^{t})^{-1} \Log|\mathbf{v}'|, T^t(\mu (\lam^{(i)} - e_j))\rangle} \ = \ e^{\langle \Log|\mathbf{v}'|, \mu (\lam^{(i)} - e_j)\rangle}
\end{eqnarray*}
these scalars remain invariant under the transformation $T$ from and to the standard form.

\begin{example}
The polynomial $f = 1 + \frac{1}{2}x^6 + \frac{1}{32}y^4 - \frac{1}{2}xy - \frac{1}{2}x^2y$ is nonnegative and has a zero at $\mathbf{v} = (1,2)$. By using the constructions in Theorem \ref{thm:multiple}, we can decompose $f$ as sum of two polynomials in $P_{n,2d}^y$ with $y \in \{(1,1),(2,1)\}$ and vanishing at $\mathbf{v}$. More precisely,
$$f \ = \ \left(\frac{7}{12} +\frac{1}{6} x^6 + \frac{1}{64}y^4 - \frac{1}{2}xy\right) + \left(\frac{5}{12} +\frac{1}{3} x^6 + \frac{1}{64}y^4 - \frac{1}{2}x^2y\right).$$
Since $\Delta$ is an $H$-simplex, we have $f \in \Sigma_{2,6}$. Using the algorithm in \cite{Reznick:AGI} and a suitable variable transformation (see proof of Theorem \ref{thm:sos}), we get the following representation for $f$ as a sum of binomial squares:
$$f \ = \ \frac{1}{2}(x - x^3)^2 + \frac{1}{2}\left(\frac{1}{2}y - x\right)^2 +\frac{1}{2}\left(\frac{1}{2}y - x^2\right)^2 + \frac{1}{2}\left(1 - x^2\right)^2 + \frac{1}{2}\left(1 - \frac{1}{4}y^2\right)^2.$$
\end{example}

\subsection{A Sufficient Condition for \textit{H}-simplices}
By Theorem \ref{thm:sos}, all nonnegative polynomials in $P_{\Delta}^y$ supported on an $H$-simplex are sums of squares. Here, we provide a sufficient condition for a lattice simplex $\Delta$ to be an $H$-simplex, meaning, that all lattice points in $\Delta$ except the vertices are midpoints of two even distinct lattice points in $\Delta$. In the following, we call a full dimensional lattice polytope $P \subset \R^n$ $k$-\emph{normal}, if every lattice point in $kP$ is a sum of exactly $k$ lattice points in $P$, i.e.,
$$k \in \N, m \in kP \cap \Z^n \ \Rightarrow \ m = m_1 +\ldots + m_k, \quad m_1,\dots, m_k \in P \cap \Z^n.$$
For an introduction to toric ideals, see for example \cite{Sturmfels:toric}.

\begin{thm}
\label{Thm:toric}
Let $\hat\Delta = \{\alpha(0), \alpha(1), \dots, \alpha(n)\}\subset (2\mathbb N)^n$ and $\Delta = \conv(\hat \Delta)$ be a lattice simplex. Furthermore, let $B = \frac{1}{2}\Delta \cap \mathbb N^n$ and $I_B$ be the corresponding toric ideal of $B$. If
\begin{enumerate}
 \item $I_B$ is generated in degree two, i.e., $I_B = \langle I_{B,2} \rangle$ and
 \item the simplex $\frac{1}{2}\Delta$ is 2-normal,
\end{enumerate}
then $\Delta$ is an $H$-simplex.
\end{thm}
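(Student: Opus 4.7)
The plan is to verify the defining property of an $H$-simplex directly: every lattice point $m \in (\Delta \cap \Z^n) \setminus \hat\Delta$ should be the midpoint of two distinct even lattice points of $\Delta$. Using the bijection $b \leftrightarrow 2b$ between $B$ and the even lattice points of $\Delta$, this reduces to the combinatorial claim that every such $m$ admits a representation $m = b_1 + b_2$ with distinct $b_1, b_2 \in B$. If this is established for all non-vertex lattice points of $\Delta$, then $L = \Delta \cap \Z^n$ is $\hat\Delta$-mediated, forcing $\Delta^* = L$.

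First, hypothesis (2) enters: the $2$-normality of $\tfrac{1}{2}\Delta$ supplies \emph{some} decomposition $m = b_1 + b_2$ with $b_1, b_2 \in B$, since $m \in \Delta = 2\cdot\tfrac{1}{2}\Delta$. The only obstruction is the case $b_1 = b_2 =: b$, so it remains to prove that whenever $m = 2b$ with $b \in B$ and $m \notin \hat\Delta$, the point $2b$ also decomposes as a sum of two \emph{distinct} elements of $B$. The assumption $m \notin \hat\Delta$ rules out $b = \alpha(i)/2$, so $b$ is a non-vertex lattice point of $\tfrac{1}{2}\Delta$ and can be written as a rational convex combination $b = \sum_i \mu_i\,\alpha(i)/2$ with at least two $\mu_i > 0$. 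Clearing denominators gives integers $c_i \geq 0$ with $\sum_i c_i = N \geq 2$ and $Nb = \sum_i c_i\,\alpha(i)/2$.

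At this point hypothesis (1) is brought in. The element $y_b^N - \prod_i y_{\alpha(i)/2}^{c_i}$ lies in $I_B$, and by the standard correspondence (Sturmfels, \emph{Gr\"obner Bases and Convex Polytopes}) a toric ideal is generated in degree $2$ if and only if any two monomials of the same degree in a common fiber of the map $y_b \mapsto t^b$ are connected by a chain of quadratic swaps $y_{b_1}y_{b_2} \leftrightarrow y_{b_3}y_{b_4}$ with $b_1 + b_2 = b_3 + b_4$. Applying this to the monomials $y_b^N$ and $\prod_i y_{\alpha(i)/2}^{c_i}$, the first non-trivial swap in the connecting chain must act on two factors $y_b\cdot y_b$ of the starting monomial and replace them with some $y_{b'}y_{b''}$ satisfying $b' + b'' = 2b$ and $\{b',b''\} \neq \{b\}$. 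Since $b' + b'' = 2b$ forces $b' = b''$ only when $b' = b'' = b$, this yields $b' \neq b''$, producing the desired distinct decomposition $m = b' + b''$ and completing the argument.

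The one non-routine ingredient is the classical equivalence between generation of $I_B$ in degree $2$ and $2$-connectedness of the fibers of the toric map; I plan to invoke this rather than re-prove it. The other technical subtlety that I will need to treat carefully is that $N \geq 2$ is guaranteed only because $b$ is not itself a vertex of $\tfrac12\Delta$, so it really is essential to have dispensed with the vertex case beforehand. Apart from these points, everything is elementary bookkeeping with lattice points in the simplex.
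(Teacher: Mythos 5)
Your proposal is correct and takes essentially the same route as the paper's proof: $2$-normality disposes of every lattice point admitting a decomposition into two distinct elements of $B$ (in particular all non-even points), and degree-two generation of $I_B$ is then used on the binomial $x_b^N - \prod_i x_{\alpha(i)/2}^{c_i}$ to extract a quadratic relation $x_b^2 - x_{b'}x_{b''} \in I_B$ with $b' \neq b''$. The only (cosmetic) difference is that the paper obtains this last step by matching monomials in an explicit expression of that binomial as a combination of degree-two generators, whereas you invoke the equivalent fiber-connectivity characterization of quadratic generation.
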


\begin{proof}
Let $L = (\Delta \cap \mathbb N^n)\setminus \hat \Delta$. Note that for $u \in L\setminus (2\mathbb N)^n$ the statement follows from normality of $\frac{1}{2}\Delta$, since we have $u = s + t$ with $s,t\in B$. Therefore, $u = \frac{2s + 2t}{2}$. Now, let 
$$ \left\{\frac{1}{2}\alpha(0),\dots,\frac{1}{2}\alpha(n)\right\} = \{\alpha(0)',\dots,\alpha(n)'\}$$ 
be the vertices of $\frac{1}{2}\hat \Delta$ and consider $u \in B\setminus \frac{1}{2}\hat \Delta$. By clearing denominators in the unique convex combination of $u$ we get a relation
$$N\cdot u \ = \ \lambda_0\alpha(0)'+\dots + \lambda_n\alpha(n)',\quad N = \sum_{i=0}^n \lambda_i,\quad \lambda_i \geq 0.$$
For the corresponding toric ideal $I_B$, this implies that $x_u^N - \prod_{i=0}^nx_{\alpha(i)'}^{\lambda_i} \in I_B$. Since $I_B$ is generated in degree two, we have the following representation:
$$x_u^N - \prod_{i=0}^nx_{\alpha(i)'}^{\lambda_i} = \sum_{\substack{m,n \in \mathbb N^B\\|m|=|n|=2}}^{} f_{m,n}(x^m - x^n)$$
for some polynomials $f_{m,n}$. Matching monomials, it follows that there exists $m$  such that $x^m = x_u^2$ (note that $f_{m,n}$ contains $x_u^{N-2}$). Since $|m| = 2$, we have $x_u^2 - x_vx_{v'} \in I_B$ with $v,v' \in B$, yielding the relation $2u = \frac{2v + 2v'}{2}$, i.e., $2u$ is a convex combination of two even lattice points $2v$ and $2v'$. 
\end{proof}

\begin{cor}\label{cor:R2}
 Let $\Delta\subset\mathbb R^2$ be a lattice simplex as in Theorem \ref{Thm:toric} such that $\frac{1}{2}\Delta$ has at least four boundary lattice points. Then $\Delta$ is an $H$-simplex.
\end{cor}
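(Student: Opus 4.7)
The plan is to verify the two hypotheses of Theorem \ref{Thm:toric}, namely that the toric ideal $I_B$ of $B = \frac{1}{2}\Delta \cap \mathbb{N}^2$ is generated in degree two and that $\frac{1}{2}\Delta$ is $2$-normal. Both conditions will follow from classical facts about lattice polygons in the plane.

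For the $2$-normality, I would appeal to the standard fact that every lattice polygon $P \subset \mathbb{R}^2$ admits a unimodular triangulation (this is proved, e.g., by repeatedly cutting off empty triangles via the Pick-formula argument). The existence of a unimodular triangulation implies the integer decomposition property: every lattice point of $kP$ decomposes as a sum of $k$ lattice points of $P$. Applying this to $P = \frac{1}{2}\Delta$ with $k = 2$ yields condition (2) of Theorem \ref{Thm:toric}.

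For the quadratic generation of $I_B$, the key input is a theorem of Koelman stating that for a lattice polygon $P \subset \mathbb{R}^2$, the toric ideal $I_{P \cap \mathbb{Z}^2}$ is generated by quadratic binomials whenever $P$ has at least $4$ lattice points on its boundary (the only exceptions being the unimodular triangles, for which $I_B = 0$ trivially). Since we are assuming that $\frac{1}{2}\Delta$ has at least four boundary lattice points, Koelman's theorem delivers condition (1) directly.

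With both hypotheses verified, Theorem \ref{Thm:toric} implies that $\Delta$ is an $H$-simplex, which completes the proof. The main ``obstacle'' is purely expository: one must correctly recognize $\frac{1}{2}\Delta$ as the relevant polygon to which the planar results apply (i.e.\ that the normality and quadratic generation conditions are formulated for $\frac{1}{2}\Delta$, not $\Delta$ itself), and note that the boundary lattice point hypothesis is exactly what Koelman's criterion requires. Aside from this translation, no new calculation is necessary beyond invoking the two classical planar facts.
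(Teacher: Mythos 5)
Your proof is correct and follows essentially the same route as the paper: the paper likewise cites the normality of all lattice polygons for hypothesis (2) and Koelman's theorem (quadratic generation of the toric ideal of a polygon with at least four boundary lattice points) for hypothesis (1), then invokes Theorem \ref{Thm:toric}. Your extra justification of planar normality via unimodular triangulations is a standard elaboration of the same fact.
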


\begin{proof}
 Since every $2$-polytope is normal, we only need to prove that the corresponding toric ideal is generated in degree two. But this is \cite[Theorem 2.10]{Koelman:toric}.
\end{proof}

Hence, in $\mathbb R^2$, almost every simplex $\Delta$ corresponding to $P_{\Delta}^y$ is an $H$-simplex, which is a fact that was announced in \cite{Reznick:AGI} without proof. This implies that the sections $P_{2,2d}^y$ and $\Sigma_{2,2d}^y$ almost always coincide.

\begin{example} 
We demonstrate Theorem \ref{Thm:toric} by two interesting examples.
\begin{enumerate}
 \item   The Newton polytope of the Motzkin polynomial $$m = 1 + x^4y^2 + x^2y^4 - 3x^2y^2 \in P_{2,6}\setminus\Sigma_{2,6}$$ is an $M$-simplex $\Delta = \conv\{(0,0),(4,2),(2,4)\}$ such that $\frac{1}{2}\Delta$ has exactly three boundary lattice points. One can check that the corresponding toric ideal $I_B$ is generated by cubics.
\item Note that the conditions in Theorem \ref{Thm:toric} are not equivalent. The lattice simplex $\Delta = \conv\{(0,0),(2,4),(10,6)\}$ is easily checked to be an $H$-simplex, but $\partial\frac{1}{2}\Delta$ contains exactly three lattice points.
\end{enumerate}
\end{example}

In higher dimensions things get more involved both in checking the conditions in Theorem \ref{Thm:toric} and in determining the maximal $\hat\Delta$-mediated set $\Delta^*$. Note that $\Delta^*$ can lie strictly between  $A(\hat\Delta)$ and $\Delta \cap \Z^n$, which correspond to $M$-simplices and $H$-simplices. In \cite{Reznick:AGI} an algorithm for the computation of $\Delta^*$ is given. One expects the existence of better algorithms, but, to our best knowledge, no more efficient algorithm is known. On the other hand, checking normality of polytopes and quadratic generation of toric ideals is an active area of research. It is an open problem to decide, whether every smooth lattice polytope is normal and the corresponding toric ideal is generated by quadrics, see \cite{Gubeladze:normality, Sturmfels:toric}. However, for an arbitrary lattice polytope $P$ the multiples $kP$ are normal for $k\geq \dim P - 1$ and their toric ideals are generated by quadrics for $k \geq \dim P$ \cite{Bruns:et:al}. In light of these results, we can conclude another interesting corollary from Theorem \ref{Thm:toric}.

\begin{cor}\label{cor:bound}
 Let $\Delta \subset\R^n$ be a lattice simplex as in Theorem \ref{Thm:toric} such that $\frac{1}{2}\Delta = M\Delta'$ for a lattice simplex $\Delta'\subset\R^n$ and $M\geq n$. Then $\Delta$ is an $H$-simplex.
\end{cor}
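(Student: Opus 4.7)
The plan is to deduce Corollary \ref{cor:bound} directly from Theorem \ref{Thm:toric} by verifying its two hypotheses for the simplex $\frac{1}{2}\Delta = M\Delta'$ using the result of Bruns et al.\ cited just above the corollary.

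First I would invoke that result in the following form: for any lattice polytope $P \subset \R^n$, the dilate $kP$ is normal whenever $k \geq \dim P - 1$, and its associated toric ideal is generated by quadrics whenever $k \geq \dim P$. Applying this to $P = \Delta'$ (so $\dim P = n$) and $k = M$, the assumption $M \geq n$ yields at the same time that $M\Delta' = \tfrac{1}{2}\Delta$ is a normal lattice polytope and that the toric ideal $I_B$ of $B = \tfrac{1}{2}\Delta \cap \N^n$ is generated in degree two. Normality in particular implies $2$-normality: every lattice point of $2 \cdot \tfrac{1}{2}\Delta = \Delta$ decomposes as a sum of two lattice points of $\tfrac{1}{2}\Delta$. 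Hence both hypotheses (1) and (2) of Theorem \ref{Thm:toric} are satisfied.

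Applying Theorem \ref{Thm:toric} then concludes that $\Delta$ is an $H$-simplex, which is the claim. The only subtlety worth flagging is bookkeeping between the two levels of dilation: Theorem \ref{Thm:toric} is phrased in terms of $\tfrac{1}{2}\Delta$ and $B$, while the Bruns et al.\ bounds apply to multiples of the smaller simplex $\Delta'$; but since $\tfrac{1}{2}\Delta = M\Delta'$ with $M \geq n = \dim\Delta'$, both the normality threshold $n-1$ and the quadratic generation threshold $n$ are cleared simultaneously, and no further argument is needed.

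I do not foresee a genuine obstacle here, as the statement is a clean combination of a black-box polyhedral result and the previously proved Theorem \ref{Thm:toric}. The only place where one must be slightly careful is ensuring that the vertices of $\Delta$ are even (which is already built into the hypothesis ``as in Theorem \ref{Thm:toric}''), so that the definition of $\tfrac{1}{2}\Delta$ as a lattice simplex makes sense and the identification $\tfrac{1}{2}\Delta = M\Delta'$ is compatible with the lattice structure used in the toric ideal $I_B$.
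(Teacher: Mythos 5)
Your proposal is correct and follows essentially the same route as the paper: the paper's proof is precisely to combine the quoted result of Bruns et al.\ (normality of $k P$ for $k\geq \dim P-1$ and quadratic generation of the toric ideal for $k\geq \dim P$), applied to $P=\Delta'$ with $k=M\geq n$, with Theorem \ref{Thm:toric}. Your additional remarks about $2$-normality following from normality and about the bookkeeping of dilations are accurate and just make explicit what the paper leaves implicit.
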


\begin{proof}
The result follows from the previously quoted results together with Theorem \ref{Thm:toric}.
\end{proof}

Note that Corollaries \ref{cor:R2} and \ref{cor:bound} yield large sections at which nonnegative polynomials and sums of squares coincide.

\section{Convex Polynomials and Forms Supported on Circuits}
\label{Sec:convex}
In this section, we investigate convex polynomials and forms (i.e., homogeneous polynomials) supported on a circuit. Recently, there is much interest in understanding the convex cone of convex polynomials/forms. Since deciding convexity of polynomials is NP-hard in general \cite{Ahmadi:et:al:convex}, but very important in different areas in mathematics, such as convex optimization, the investigation of properties of the cones of convex polynomials and forms is a genuine problem.

\begin{defn}
 Let $f \in \mathbb R[\mathbf{x}]$. Then $f$ is \emph{convex} if the Hessian $H_f$ of $f$ is positive semidefinite for all $\mathbf{x} \in \mathbb R^n$, or, equivalently, $\mathbf{v}^t H_f(\mathbf{x}) \mathbf{v} \geq 0$ for all $\mathbf{x},\mathbf{v} \in \mathbb R^n$.
\end{defn}

Unlike the property of nonnegativity and sums of squares, convexity of polynomials is not preserved under homogenization. Therefore, we need to distinguish between convex polynomials and convex forms. The relationship between convexity on the one side and nonnegativity and sums of squares on the other side arises when considering homogeneous polynomials, since every convex form is nonnegative. However, the relation between convex forms and sums of squares is not well understood except for the fact that their corresponding cones are not contained in each other. The problem to find a convex form that is not a sum of squares is still open. For an overview and proofs of the previous facts see \cite{Blekherman:Parrilo:Thomas, Reznick:Blenders}. Here we investigate convexity of polynomials and forms in the class $P_{\Delta}^y$. We start with the univariate (nonhomogeneous) case.

\begin{prop}
\label{prop:convexunivariat}
 Let $f = 1 + ax^y + bx^{2d} \in P_{\Delta}^y$ and $b > 0$. Then $f$ is convex exactly in the following cases.
\begin{enumerate}
 \item $y = 1$,
 \item $a \geq 0$ and $y=2l$ for $y > 1$ and $l \in \N$.
\end{enumerate}
\end{prop}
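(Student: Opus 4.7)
The plan is to analyze convexity directly via the univariate second derivative
\[
f''(x) \;=\; a\,y(y-1)\,x^{y-2} + 2d(2d-1)\,b\,x^{2d-2},
\]
and show that $f''(x)\geq 0$ for all $x\in\R$ precisely in the two listed cases. Since $b>0$ and $y\in\{1,\dots,2d-1\}$, we have $2d-2>y-2\geq -1$, so the high-degree term dominates at infinity and vanishes faster at the origin.

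First, I will handle the easy positive cases. If $y=1$, then $y(y-1)=0$ kills the first term and $f''(x)=2d(2d-1)b\,x^{2d-2}\geq 0$ since $2d-2$ is even and $b>0$, giving convexity for every $a$. If $y=2l$ with $l\geq 1$ and $a\geq 0$, then both exponents $y-2=2l-2$ and $2d-2$ are even and both coefficients are nonnegative, so each summand of $f''(x)$ is nonnegative and $f$ is convex. (The edge case $a=0$ is the sum of monomial squares one, consistent with the paper's abuse of notation.)

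Next, I will rule out the remaining cases by exhibiting a point where $f''$ is negative. Factor
\[
f''(x) \;=\; x^{y-2}\bigl(a\,y(y-1) + 2d(2d-1)\,b\,x^{2d-y}\bigr),
\]
noting $2d-y\geq 1$, so the parenthesised factor tends to $a\,y(y-1)$ as $x\to 0$. In the case $y=2l\geq 2$ with $a<0$, the bracket is strictly negative at $x=0$ and $x^{y-2}>0$ for $x\neq 0$, hence $f''(x)<0$ on a punctured neighbourhood of $0$; so $f$ is not convex. In the case $y$ odd and $y\geq 3$, the coefficient $a\neq 0$ makes the bracket nonzero at $x=0$, while $x^{y-2}$ is an odd power; choosing the sign of $x$ opposite to the sign of $a\,y(y-1)$ and $|x|$ small forces $f''(x)<0$, so again $f$ is not convex.

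The only genuine technicality is the dominance argument near the origin, but $2d-y\geq 1$ makes the perturbation term $2d(2d-1)b\,x^{2d-y}$ negligible compared to the nonzero constant $a\,y(y-1)$, so a standard continuity statement suffices. Combining the four cases (one $y=1$; two subcases of $y$ even, $y>1$; one $y$ odd, $y>1$) yields the claimed equivalence.
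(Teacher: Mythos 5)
Your proposal is correct and follows essentially the same route as the paper: compute the second derivative $a\,y(y-1)x^{y-2}+2d(2d-1)b\,x^{2d-2}$, observe it is a nonnegative combination of even powers in the two convex cases, and show it is indefinite otherwise. The only cosmetic difference is that you justify indefiniteness by a near-origin dominance argument, whereas the paper invokes the fact that the lower-order term is a vertex of the Newton polytope of $D^2(f)$ with a negative coefficient (or an odd exponent); these are the same observation.
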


\begin{proof}
 Let $f = 1 + ax^y + bx^{2d}$. Note that the degree is necessarily even and $b>0$. $f$ is convex if and only if $D^2(f) \geq 0$ where
$D^2(f) = ay(y-1)x^{y-2} + 2db(2d-1)x^{2d-2}$. For $y=1$ the polynomial $D^2(f)$ is a square and hence $f$ is convex. Now, consider the case $y > 1$. First, suppose that $a < 0$. Then $D^2(f)$ is always indefinite, since the monomial $x^{y-2}$ in $D^2(f)$ corresponds to a vertex of the corresponding Newton polytope of $D^2(f)$ and has a negative coefficient. Otherwise, if $a \geq 0$ and $y=2l$ for $l \in \N$, then $D^2(f)\geq 0$ and $f$ is convex. If $y=2l+1$, then $x^{y-2}$ has an odd power and hence $D^2(f)$ is indefinite, implying that $f$ is not convex.
\end{proof}

The homogeneous version is much more difficult than the affine version. We just prove the following claims instead of giving a full characterization.

\begin{prop}
 Let $f = z^{2d} + ax^yz^{2d-y} + bx^{2d} \in P_{\Delta}^y$ be a form and $b > 0$. Then the following hold.
\begin{enumerate}
 \item For $y = 2l - 1$, $l\in \mathbb N$, or $a \leq 0$, the form $f$ is not convex.
\item For $y = 2l$ and $0 \leq a \leq \frac{(y-1)(2d-y-1)}{y(2d-y)}$ the form $f$ is convex.
\end{enumerate}
\end{prop}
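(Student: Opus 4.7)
Since $f$ is a form in two variables, $H_f$ is a $2\times 2$ symmetric matrix and $f$ is convex iff $H_f\succeq 0$ pointwise, i.e., $f_{xx}\geq 0$, $f_{zz}\geq 0$, and $\det H_f = f_{xx}f_{zz}-f_{xz}^2\geq 0$ for all $(x,z)\in\R^2$. Direct computation yields $f_{xx}=ay(y-1)x^{y-2}z^{2d-y}+2d(2d-1)bx^{2d-2}$, $f_{xz}=ay(2d-y)x^{y-1}z^{2d-y-1}$, and the symmetric expression for $f_{zz}$.

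\textbf{Part (1).} The plan is to exhibit explicit points where a diagonal entry of $H_f$ is negative. If $a<0$ and $y\geq 2$, I would evaluate $f_{xx}$ at $(x,z)=(\varepsilon,1)$ with $\varepsilon\to 0^{+}$: the term $ay(y-1)\varepsilon^{y-2}$ (negative, since $a<0$) asymptotically dominates $2d(2d-1)b\varepsilon^{2d-2}$ because $y-2<2d-2$, so $f_{xx}<0$ there. If $y=2l-1$ with $l\geq 2$ and $a\neq 0$, then $y-2$ is a positive odd integer; choosing the sign of $x$ appropriately again produces a negative inner contribution to $f_{xx}$ that dominates for $|x|$ small. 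The case $y=1$, $a\neq 0$ is handled symmetrically using $f_{zz}$, whose inner term $a(2d-1)(2d-2)xz^{2d-3}$ carries an odd power of $z$ whenever $d\geq 2$. In every instance $H_f$ fails to be PSD at the chosen point.

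\textbf{Part (2).} Now suppose $y=2l$ is even and $0\leq a\leq (y-1)(2d-y-1)/(y(2d-y))$. Evenness of $y$ and $2d-y$ makes every monomial appearing in $f_{xx}$ and $f_{zz}$ a nonnegative function on $\R^2$, so the two diagonal conditions are automatic. For the determinant, I would expand $f_{xx}f_{zz}-f_{xz}^2$ and use the algebraic identity $(y-1)(2d-y-1)-y(2d-y)=-(2d-1)$ to write
\[
\det H_f = T_1S_1+T_2S_1+T_2S_2-a^2(2d-1)y(2d-y)\,x^{2y-2}z^{4d-2y-2},
\]
where $T_1,T_2$ and $S_1,S_2$ are the monomial summands of $f_{xx}$ and $f_{zz}$. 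The key structural observation is that the four exponent vectors $(y-2,4d-y-2)$, $(2d-2,2d-2)$, $(2d+y-2,2d-y-2)$, $(2y-2,4d-2y-2)$ share coordinate sum $4d-4$ and are collinear along direction $(1,-1)$. Since all exponents are even, $\det H_f\geq 0$ on $\R^2$ is equivalent to the same inequality on $\R^{2}_{>0}$; the substitution $u=x/z$ (and clearing the smallest power of $u$) then reduces the question to nonnegativity for $u>0$ of a polynomial supported on the one-dimensional simplex with vertices $0$ and $2d$, whose two interior nodes sit at $y$ (with negative coefficient) and $2d-y$ (with positive coefficient). I would then apply Theorem~\ref{Thm:Positiv} to the three-term subpolynomial consisting of the two vertex terms together with the negative interior term; the remaining positive interior term only strengthens nonnegativity.

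\textbf{Main obstacle.} The delicate step is matching the circuit-number condition provided by Theorem~\ref{Thm:Positiv} to the explicit bound $(y-1)(2d-y-1)/(y(2d-y))$ in the hypothesis. Concretely, one must show that the coefficient $a^2y(2d-y)(2d-1)$ of the negative monomial does not exceed the appropriate weighted geometric mean of the coefficients of $T_1S_1$ and $T_2S_2$; this reduces, after the hypothesis $ay(2d-y)\leq (y-1)(2d-y-1)$ is invoked once to linearize the $a^2$-factor, to the elementary auxiliary inequality $4(y-1)(2d-y-1)\geq 2d-1$, which holds whenever $y,2d-y\geq 2$ (e.g.\ via the substitution $u=y-1$, $v=2d-y-1$ and the observation $(u-1)(v-1)\geq 0$).
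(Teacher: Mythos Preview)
Your Part (1) is fine and essentially parallels the paper's argument: the paper simply evaluates $f_{zz}$ at $z=1$ and observes that nonnegativity of the resulting univariate polynomial forces $y$ even and $a\geq 0$, while you split into the subcases $y\geq 3$ odd (using $f_{xx}$), $y=1$ (using $f_{zz}$), and $a<0$, $y\geq 2$ (using $f_{xx}$). Both lines are valid and amount to the same observation.

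Your Part (2), however, has a genuine gap in the ``Main obstacle'' paragraph. After the shift $u=x/z$ and division by $u^{y-2}$ you correctly arrive at a four--term polynomial whose vertex coefficients are
\[
A=2d(2d-1)\,a\,y(y-1) \quad\text{at }u^{0},\qquad B=2d(2d-1)\,a\,b\,(2d-y)(2d-y-1)\quad\text{at }u^{2d},
\]
with the negative interior coefficient $D=a^{2}y(2d-y)(2d-1)$ at $u^{y}$. The circuit number built from $A$ and $B$ equals $(A/\lambda_0)^{\lambda_0}(B/\lambda_1)^{\lambda_1}$ with $\lambda_1=y/(2d)$, and therefore carries a factor $b^{\,y/(2d)}$. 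Your claimed reduction to ``$4(y-1)(2d-y-1)\geq 2d-1$'' has silently discarded this $b$--dependence; no inequality that is independent of $b$ can certify $D\leq\Theta$ for all $b>0$. The positive interior term $Cu^{2d-y}$ does not rescue the situation either, since $C=[2d(2d-1)]^{2}b$ also vanishes with $b$.

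In fact this is not merely a gap in your argument but a defect of the assertion itself. Take $2d=6$, $y=2$, $a=\tfrac{3}{8}$ (the exact upper bound), and $b=10^{-6}$. One computes $f_{xx}(3,1)=\tfrac34+3\cdot 10^{-5}\cdot 81$, $f_{zz}(3,1)=30+\tfrac92\cdot 9$, $f_{xz}(3,1)=9$, hence $\det H_f(3,1)<0$, so $f$ is not convex. The paper's own proof of Part (2) asserts that ``all exponents of the dehomogenized determinant $\det H_f(x,1)$ are even and have positive coefficients'' under the stated bound on $a$; this is the point that fails, since the coefficient of $x^{2y-2}$ is $-a^{2}y(2d-y)(2d-1)<0$ regardless of $a$ and $b$. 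Thus the paper's route (checking signs of coefficients) is shorter than yours but equally breaks down, and Part (2) as formulated appears to require an additional hypothesis on $b$ (e.g.\ $b=1$) to be salvageable.
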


\begin{proof}
We have 
$$\frac{\partial^2f}{\partial z^2} \ = \ 2d(2d - 1)z^{2d-2} + (2d - y)(2d - y -1)ax^yz^{2d-y-2}.$$ 
Evaluating this partial derivative at $z = 1$, in order to be nonnegative, it is obvious that $y$ must be even and $a \geq 0$, proving the first claim. For the second claim, we investigate the principal minors of $H_f$. We have that $\frac{\partial^2f}{\partial x^2}\geq 0$ if and only if $D^2(f)\geq 0$ where $D^2(f)$ is the dehomogenized polynomial $\frac{\partial^2f}{\partial x^2}(x,1)$. This yields $y = 1$ or $a \geq 0$ and $y = 2l$. From $\frac{\partial^2f}{\partial z^2}$ we get again that $y$ must be even and $a \geq 0$. Finally, one can check that all exponents of the dehomogenized determinant $\det H_f(x,1)$ are even and have positive coefficients for $0 \leq a \leq \frac{(y-1)(2d-y-1)}{y(2d-y)}$. Hence, for $y = 2l$ and $0 \leq a \leq \frac{(y-1)(2d-y-1)}{y(2d-y)}$ the form $f$ is convex.
\end{proof}

Note that for $y = 1$ the form $f = z^{2d} + ax^yz^{2d-y} + bx^{2d} \in P_{\Delta}^y$ is never convex, whereas, by Proposition \ref{prop:convexunivariat}, the dehomogenized polynomial is always convex.
As a sharp contrast, we prove the surprising result that for $n\geq 2$ there are no convex polynomials in the class $P_{\Delta}^y$, implying that there are no convex forms in $P_{\Delta}^y$ for $n \geq 3$.

\begin{thm}
\label{thm:konvex}
 Let $n\geq 2$ and $f \in P_{\Delta}^y$. Then $f$ is not convex.
\end{thm}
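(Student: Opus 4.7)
The strategy is to exhibit a point $\mathbf{x}^* \in \R^n$ at which the Hessian $H_f(\mathbf{x}^*)$ fails to be positive semidefinite. For $\mathbf{x} \in \R^n_{>0}$, rescaling by $D_\mathbf{x} := \operatorname{diag}(x_1,\ldots,x_n)$ yields
\[
D_\mathbf{x}\, H_f(\mathbf{x})\, D_\mathbf{x} \ = \ \sum_{k=0}^n b_k\,\mathbf{x}^{\alpha(k)}\, K_{\alpha(k)} \ + \ c\,\mathbf{x}^y\, K_y,
\]
where $K_\beta := \beta\beta^T - \operatorname{diag}(\beta)$ for $\beta \in \N^n$. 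The key observation is that whenever $\beta$ has two positive entries $\beta_p,\beta_q \geq 1$, the $2\times 2$ principal minor of $K_\beta$ on indices $p,q$ equals $-\beta_p\beta_q(\beta_p+\beta_q-1) < 0$, so $K_\beta$ is not positive semidefinite. The proof splits into two cases according to the combinatorics of $\Delta$.

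\textit{Case 1: some vertex $\alpha(k)$ of $\Delta$ has at least two positive entries.} Since $\alpha(k)$ is a vertex of the simplex $\Delta$, its outward normal cone has nonempty interior; pick $\mathbf{a}$ in this interior. Then $\langle \mathbf{a},\alpha(k)\rangle > \langle \mathbf{a},\alpha(\ell)\rangle$ for every $\ell \neq k$, and since $y$ is a strict convex combination of the vertices, also $\langle \mathbf{a},\alpha(k)\rangle > \langle \mathbf{a},y\rangle$. Setting $\mathbf{x}(t) = \exp(t\mathbf{a})$ and letting $t \to \infty$,
\[
e^{-t\langle \mathbf{a},\alpha(k)\rangle}\cdot D_{\mathbf{x}(t)}\, H_f(\mathbf{x}(t))\, D_{\mathbf{x}(t)} \ \longrightarrow \ b_k\, K_{\alpha(k)},
\]
a matrix which is not positive semidefinite. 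Hence $H_f(\mathbf{x}(t))$ is not PSD for $t$ sufficiently large.

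\textit{Case 2: every vertex of $\Delta$ has at most one positive entry.} Then each vertex lies on a coordinate axis, so for any pair $i \neq j$ and every $k$ we have $\alpha(k)_i\cdot\alpha(k)_j = 0$, and thus
\[
f_{x_i x_j}(\mathbf{x}) \ = \ c\, y_i y_j\, \mathbf{x}^{y-e_i-e_j}
\]
is a single monomial. Consider the principal $2\times 2$ minor $M_{ij}(\mathbf{x}) := f_{x_i x_i}(\mathbf{x})\, f_{x_j x_j}(\mathbf{x}) - f_{x_i x_j}(\mathbf{x})^2$. When one expands $f_{x_i x_i}\, f_{x_j x_j}$, no outer-outer product contributes to the monomial $\mathbf{x}^{2y-2e_i-2e_j}$, since this would require $\alpha(k) + \alpha(\ell) = 2y$ for two vertices of the coordinate simplex, forcing $y$ to lie on an edge of $\Delta$ and contradicting $y \in \operatorname{int}(\Delta)$. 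Consequently the coefficient of $\mathbf{x}^{2y-2e_i-2e_j}$ in $M_{ij}$ equals
\[
c^2 y_i y_j \bigl[(y_i-1)(y_j-1) - y_i y_j\bigr] \ = \ -c^2 y_i y_j (y_i + y_j - 1) \ < \ 0
\]
when $y_i, y_j \geq 2$, and equals $-c^2 y_i^2 y_j^2 < 0$ otherwise (when one of $y_i,y_j$ equals $1$, the corresponding inner contribution to the diagonal entry vanishes, but the $f_{x_i x_j}^2$ contribution remains).

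It remains to verify that $2y - 2e_i - 2e_j$ is a vertex of $\New(M_{ij})$, so that this negative coefficient precludes nonnegativity of $M_{ij}$. Its coordinates $2y_l$ (for $l \neq i,j$) and $2y_i-2$, $2y_j-2$ are all in $2\N$ because $y_i, y_j \geq 1$. For $n \geq 3$, the coordinate direction $e_l$ with $l \notin \{i,j\}$ separates: the $l$-th coordinate of the inner-inner exponent is $2y_l$, strictly greater than the $l$-th coordinate ($y_l$ or $0$) of every other exponent appearing in $M_{ij}$. For $n = 2$, a finite case analysis, distinguishing whether the origin is a vertex of $\Delta$ and tracking whether the barycentric coordinates of $y$ lie above or below $1/2$, yields in each subcase an explicit direction in $\R^2$ along which the inner-inner exponent is strictly extremal among the small finite list of other exponents of $M_{ij}$. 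In every case $M_{ij}$ attains negative values, and hence $H_f$ is not positive semidefinite.

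The main obstacle is the planar case $n = 2$ of Case 2, where $\New(M_{ij})$ has only a handful of extreme points and extremality of $2y - 2e_i - 2e_j$ must be checked directly. Once this geometric check is carried out, the theorem follows uniformly.
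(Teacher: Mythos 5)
Your Case~1 and the $n\geq 3$ branch of Case~2 are correct. Case~1 (rescale the Hessian by $D_{\mathbf{x}}$ and let the monomial of an extreme vertex dominate along a direction in its normal cone) is a genuinely different and quite clean argument; the paper instead gives a single uniform Newton--polytope argument on the $[1,2]$ minor $M_{12}$: it shows that $(2y_1-2,2y_2-2,2y_3,\ldots,2y_n)$ is a vertex of $\New(M_{12})$ carrying the negative coefficient $-c^2y_1y_2(y_1+y_2-1)$, the key geometric point being that writing this exponent as a convex combination of the outer--outer exponents would force $y$ onto the facet $\conv\{\alp(1),\ldots,\alp(n)\}$ opposite the vertex $0$. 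Note the paper's proof is written for $f=1+\cdots$, i.e.\ it assumes $0\in\V(\Delta)$.

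The genuine gap is your deferred ``finite case analysis'' for $n=2$ in Case~2, and it cannot be completed as claimed. In the subcase where two vertices of $\Delta$ lie on the same coordinate axis (equivalently $0\notin\V(\Delta)$), the exponent $2y-2e_1-2e_2$ need not be a vertex of $\New(M_{12})$, and your assertion that ``in every case $M_{ij}$ attains negative values'' fails. Concretely, take $\Delta=\conv\{(2,0),(6,0),(0,4)\}$, $y=(2,2)\in\Int(\Delta)$, and $f=x_1^2+x_1^6+x_2^4+x_1^2x_2^2\in P_{\Delta}^{(2,2)}$. Then
\[
M_{12} \ = \ 4x_1^2+24x_2^2+60x_1^6+360x_1^4x_2^2+24x_2^4-12x_1^2x_2^2 ,
\]
the exponent $(2,2)$ is the midpoint of $(0,2)$ and $(4,2)$ and hence not a vertex, and by AM--GM $24x_2^2+360x_1^4x_2^2\geq 2\sqrt{8640}\,x_1^2x_2^2>12x_1^2x_2^2$, so $M_{12}\geq 0$; since the diagonal entries of $H_f$ are also nonnegative, $H_f$ is positive semidefinite everywhere and this $f$ is convex. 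So the statement itself breaks in that subcase; the paper sidesteps it by normalizing $\alp(0)=0$, a reduction that is harmless for nonnegativity (divide by $\mathbf{x}^{\alp(0)}$) but \emph{not} for convexity. If you add the hypothesis $0\in\V(\Delta)$, the only remaining $n=2$ subcase of your Case~2 is $\Delta=\conv\{0,ae_1,be_2\}$, where the extremality check does succeed (it reduces to exactly the paper's computation: a convex representation of $2y-2e_1-2e_2$ by the other exponents would place $y$ on the edge from $ae_1$ to $be_2$), and your proof closes; without that hypothesis no proof can.
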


\begin{proof}
 Let $$f = 1 + \sum_{j=1}^n A_jx_1^{\alpha(j)_1}\cdot\ldots\cdot x_n^{\alpha(j)_n} + Bx_1^{y_1}\cdot\ldots\cdot x_n^{y_n}$$

with $A_j > 0$ for $1\leq j \leq n$ and $B \in \mathbb R^*$. We will prove that the principal minor $[1,2]\times [1,2]$ (deleting all rows and columns except the first and second one) of the Hessian of $f$ is indefinite, implying that the Hessian of $f$ is not positive semidefinite and, hence, the polynomial $f$ is not convex. We have
{\tiny
\begin{eqnarray*}
 \frac{\partial^2 f}{\partial x_1^2}\frac{\partial^2 f}{\partial x_2^2}-\left(\frac{\partial^2 f}{\partial x_1x_2}\right)^2 &=&\sum_{j=1}^n\sum_{i=1}^n\left(\alpha(j)_1(\alpha(j)_1-1)A_jx_1^{\alpha(j)_1-2}x_2^{\alpha(j)_2}\cdot\ldots\cdot x_n^{\alpha(j)_n} + y_1(y_1-1)Bx_1^{y_1-2}x_2^{y_2}\cdot\ldots\cdot x_n^{y_n}\right)\\
&\cdot& \left(\alpha(i)_2(\alpha(i)_2-1)A_ix_1^{\alpha(i)_1}x_2^{\alpha(i)_2-2}\cdot\ldots\cdot x_n^{\alpha(n)_i}+By_2(y_2-1)x_1^{y_1}x_2^{y_2-2}x_3^{y_3}\cdot\ldots\cdot x_n^{y_n}\right)\\ 
&-&\left(\sum_{k=1}^n\alpha(k)_1\alpha(k)_2A_kx_1^{\alpha(k)_1-1}x_2^{\alpha(k)_2-1}x_3^{\alpha(k)_3}\cdot\ldots\cdot x_n^{\alpha(k)_n} + By_1y_2x_1^{y_1-1}x_2^{y_2-1}x_3^{y_3}\cdot\ldots\cdot x_n^{y_n}\right)^2.\\
\end{eqnarray*}
}
We claim that there is a point $\mathbf{x} \in \mathbb R^n$ at which this minor is negative. For this, note that all exponents in $\left(\frac{\partial^2 f}{\partial x_1x_2}\right)^2$ are captured by those in $\frac{\partial^2 f}{\partial x_1^2}\frac{\partial^2 f}{\partial x_2^2}$. Hence, we can restrict to the latter ones. The $\binom{n+2}{2}$ different exponents are of the following type:
\begin{enumerate}
 \item $(2\alpha(j)_1 - 2, 2\alpha(j)_2 - 2,2\alp(j)_3,\ldots,2\alp(j)_n)$ for $1\leq j\leq n$,
\item $(\alpha(i)_1 + \alpha(j)_1 - 2, \alpha(i)_2 + \alpha(j)_2 - 2,\alp(i)_3 + \alp(j)_3,\ldots,\alp(i)_n + \alp(j)_n)$ for $1\leq i < j\leq n$,
\item $(\alpha(j)_1 + y_1 - 2, \alpha(j)_2 + y_2 - 2, \alp(j)_3 + y_3,\ldots,\alp(j)_n + y_n)$ for $1\leq j\leq n$,
\item $(2y_1 - 2, 2y_2 - 2,2y_3,\ldots,2y_n)$.
\end{enumerate}
We claim that the point $(2y_1 - 2, 2y_2 - 2,2y_3,\ldots,2y_n)$ is always a vertex in the convex hull of the points (1)-(4), i.e., in the Newton polytope of the investigated minor. The points in (2) are obviously convex combinations from appropriate points in (1) and the points in (3) are convex combinations from points in (1) and (4). Hence, it remains to show that (4) is not a convex combination of the points in (1). Therefore, denote the points in (1) by $P_j$ and the point in (4) by $Q$. Let
\begin{eqnarray*}
	Q & = & \sum_{j=1}^n \mu_j P_j \ \text{ with } \ \sum_{j=1}^n \mu_j = 1 \ \text{ and } \ \mu_j\geq 0 \ \text{ for all } \ 1\leq j\leq n.
\end{eqnarray*}

But since $\sum_{j = 1}^n \mu_j (-2) = -2$, this equation is equivalent to
\begin{eqnarray*}
          y & = & \sum_{j=1}^n \mu_j\alpha(j) \ \text{ with } \ \sum_{j=1}^n \mu_j = 1  \ \text{ and } \ \mu_j \geq 0  \ \text{ for all } \ 1\leq j\leq n.
\end{eqnarray*}
 
But this means that $y$ lies on the boundary of $\Delta$, the Newton polytope of $f$. This is a contradiction, since $f \in P_{\Delta}^y$, i.e., $y\in \Int(\Delta)$. Hence, (4) is a vertex of the Newton polytope of the investigated minor. Extracting the coefficient of its corresponding monomial in the minor, we get that this coefficient equals $-B^2y_1y_2(y_1 + y_2 - 1) < 0$. Therefore, the Newton polytope of the minor of the Hessian of $f$ has a vertex coming with a negative coefficient and, hence, it is indefinite, proving the claim.
\end{proof}

Note that this already implies that there is also no convex form in $P_{\Delta}^y$ whenever $n\geq 3$, since non-convexity is preserved under homogenization. Since it is mostly unclear which structures prevent polynomials from being convex, Theorem \ref{thm:konvex} is an indication that sparsity is among these structures.

\section{Sums of Nonnegative Circuits}
\label{Sec:SOPC}
Motivated by results in previous sections, we recall Definition \ref{Def:SONC} from the introduction, where we introduced sums of nonnegative circuit polynomials (SONC's), a new family of nonnegativity certificates.

\begin{defn}
 We define the set of \emph{sums of nonnegative circuit polynomials} (SONC) as
$$ C_{n,2d} \ = \ \left\{f \in \R[\mathbf{x}]_{2d} \ :\  f = \sum_{i=1}^k \lambda_i g_i, \lambda_i \geq 0, g_i \in P_{\Delta_i}^y\cap P_{n,2d}\right\}$$
for some even lattice simplices $\Delta_i \subset \R^n$.
\end{defn}

Remember that membership in $P_{n,2d}^y$ can easily be checked and is completely characterized by the circuit numbers $\Theta_{g_i}$ (Theorem \ref{Thm:Positiv}). Obviously, for $\alpha,\beta \in \R_{>0}$ and $f,g \in C_{n,2d}$, it holds that $\alpha f + \beta g \in C_{n,2d}$, hence, $C_{n,2d}$ is a convex cone. Then we have the following relations.

\begin{prop}
\label{prop:cones}
 The following relationships hold between the corresponding cones.
\begin{enumerate}
 \item $C_{n,2d} \subset P_{n,2d}$ for all $d,n \in \N$,
\item $C_{n,2d} \subset \Sigma_{n,2d}$ if and only if $(n,2d)\in\{(1,2d),(n,2),(2,4)\}$,
\item  $\Sigma_{1,2} \subset C_{1,2}$ and $\Sigma_{n,2d} \not\subset C_{n,2d}$ for all $(n,2d)$ with $2d \geq 6$.
\item $C_{n,2d} \cap K_{n,2d} = \{0\}$ for $n \geq 2$, where $K_{n,2d}$ denotes the cone of convex polynomials.
\end{enumerate}
\label{Prop:ConeContainment}
\end{prop}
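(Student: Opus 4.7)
Part (1) is immediate from the definitions: each $g_i \in P_{\Delta_i}^y \cap P_{n,2d}$ is nonnegative, and a nonnegative combination of nonnegative polynomials is nonnegative, so any $f = \sum_i \lambda_i g_i \in C_{n,2d}$ lies in $P_{n,2d}$.

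For part (2), the direction $(n,2d) \in \{(1,2d),(n,2),(2,4)\} \Rightarrow C_{n,2d} \subseteq \Sigma_{n,2d}$ follows from part (1) combined with Hilbert's classification $P_{n,2d} = \Sigma_{n,2d}$ in precisely these three families. For the converse, I produce, outside the Hilbert list, a witness $f \in C_{n,2d} \setminus \Sigma_{n,2d}$. The Motzkin polynomial $m = 1 + x_1^2 x_2^4 + x_1^4 x_2^2 - 3 x_1^2 x_2^2$ lies in $P_{2,6}^y \cap P_{2,6} \subseteq C_{2,6}$ (its inner coefficient $-3$ equals $-\Theta_m$ by Theorem \ref{Thm:Positiv}) and is famously not in $\Sigma_{2,6}$; this handles $(n,2d) = (2,6)$. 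For other $(n,2d)$ outside the Hilbert list, I would multiply $m$ by an appropriate monomial square $x_1^{2a_1} \cdots x_n^{2a_n}$ and/or introduce additional even vertex monomials, remaining a nonnegative circuit polynomial while preserving the obstruction to being SOS.

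For part (3), the inclusion $\Sigma_{1,2} \subseteq C_{1,2}$: every nonnegative univariate quadratic $a_0 + a_1 x + a_2 x^2$ has $a_0, a_2 \geq 0$ and $a_1^2 \leq 4 a_0 a_2$. It is supported on the circuit $\{0,1,2\}$ with $\lambda_0 = \lambda_1 = 1/2$, giving circuit number $\Theta = 2\sqrt{a_0 a_2}$, so $|a_1| \leq \Theta$ and Theorem \ref{Thm:Positiv} places it in $P_\Delta^y \cap P_{1,2} \subseteq C_{1,2}$. For the claim $\Sigma_{n,2d} \not\subseteq C_{n,2d}$ when $2d \geq 6$, I would exhibit a concrete binomial square (e.g., $(1 - x^3 + x)^2$ in the univariate case, suitably generalized) whose mixed-sign support cannot be written as a sum of nonnegative circuit polynomials. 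The obstruction comes from the strict positivity of vertex coefficients in each $g_i$ together with the constraint $|c_i| \leq \Theta_{g_i}$ on inner coefficients: a careful sign and Newton-polytope bookkeeping rules out any decomposition.

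For part (4), assume for contradiction that $f \in C_{n,2d} \cap K_{n,2d}$ is nonzero with $n \geq 2$ and decompose $f = \sum_i \lambda_i g_i$ with $g_i \in P_{\Delta_i}^{y_i} \cap P_{n,2d}$. The plan is to adapt the proof of Theorem \ref{thm:konvex}: consider the $[1,2] \times [1,2]$ principal minor $\det H_f^{[1,2]} = \partial_{11} f \cdot \partial_{22} f - (\partial_{12} f)^2$ and analyze its Newton polytope. Each $g_i$ contributes a monomial at a point of the form $(2y_i^{(1)} - 2,\, 2y_i^{(2)} - 2,\, 2y_i^{(3)}, \ldots, 2y_i^{(n)})$ with strictly negative coefficient $-c_i^{2} y_i^{(1)} y_i^{(2)} (y_i^{(1)} + y_i^{(2)} - 1)$. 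The main obstacle, and the heart of the argument, is to select a $g_i$ whose interior point $y_i$ is extremal in the appropriate sense so that the corresponding bad monomial is a vertex of the Newton polytope of the combined minor and cannot be cancelled by contributions from other $g_j$'s. Once such a surviving negative vertex coefficient is exhibited, the minor is indefinite, so $H_f$ is not positive semidefinite, contradicting convexity and forcing $f = 0$.
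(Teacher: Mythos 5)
Parts (1), the forward implication in (2) via Hilbert's theorem, and the inclusion $\Sigma_{1,2}\subset C_{1,2}$ are correct and agree with the paper. The remaining claims all have genuine gaps. In (2), your plan to reach every non-Hilbert pair from the Motzkin polynomial by multiplying with monomial squares or adjoining further vertex monomials cannot produce the pairs $(n,4)$ with $n\geq 3$: those operations never lower the degree below $6$, so no quartic witness arises this way. The paper handles precisely these cases with the Choi--Lam form $N=1+x^2y^2+y^2z^2+x^2z^2-4xyz\in P_{3,4}\setminus\Sigma_{3,4}$, which is itself a nonnegative circuit polynomial (all $\lambda_j=\tfrac{1}{4}$, so $\Theta_N=4=|c|$) and hence lies in $C_{n,4}$ for every $n\geq 3$. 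You need this, or some other degree-four witness, to close part (2).

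For the second half of (3) you give no argument: ``a careful sign and Newton-polytope bookkeeping rules out any decomposition'' is a hope, not a proof, and it is unclear that $(1-x^3+x)^2$ is even a convenient witness. The paper's route is entirely different and much cleaner: by Corollary~\ref{cor:zerobound} a polynomial in $C_{n,2d}$ has at most $2^n$ real zeros (a zero of a sum of nonnegative polynomials must be a common zero of all summands), whereas by \cite[Proposition~4.1]{Reznick:realzeros} there exist sums of squares in $\Sigma_{n,2d}$ with $d^n$ zeros; since $d^n>2^n$ for $d\geq 3$ these cannot be SONC, which settles all $2d\geq 6$. Note that your candidate has only one real zero, so it is immune even to this counting argument and would require a genuinely new obstruction. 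Finally, in (4) you correctly observe that Theorem~\ref{thm:konvex} does not apply termwise --- the determinant of $\sum_i\lambda_i H_{g_i}$ is not the sum of the determinants, so cross terms could in principle cancel the negative vertex coefficient coming from a single $g_i$ --- but you then leave exactly this cancellation analysis (``select a $g_i$ whose bad monomial survives'') open, and that step is the entire content of the claim; the paper itself simply deduces (4) from Theorem~\ref{thm:konvex}. As written, your part (4) is a plan rather than a proof.
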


\begin{proof}
 Since all $\lambda_i g_i \in P_{n,2d}$, the first inclusion is obvious. For the second part note that one direction follows from the first inclusion and Hilbert's Theorem \cite{Hilbert:Seminal} stating that $(n,2d)\in\{(1,2d),(n,2),(2,4)\}$ if and only if $P_{n,2d} = \Sigma_{n,2d}$. Conversely, if $(n,2d)\notin\{(1,2d),(n,2),(2,4)\}$ then one can use homogenizations of the Motzkin polynomial and the dehomogenized agiform $N = 1 + x^2y^2 + y^2z^2 + x^2z^2 - 4xyz \in P_{3,4}\setminus\Sigma_{3,4}$ to obtain polynomials in $C_{n,2d}\setminus\Sigma_{n,2d}$. 
 
 Considering $(3)$ note that if $(n,2d) = (1,2)$ then $\Sigma_{1,2} = P_{1,2} = C_{1,2}$. In other cases we make use of the following observations. By Corollary \ref{cor:zerobound}, a polynomial $f \in C_{n,2d}$ has at most $2^n$ zeros. Additionally, by \cite[Proposition 4.1]{Reznick:realzeros} there exist polynomials in $\Sigma_{n,2d}$ with $d^n$ zeros. The only cases, for which the claim does not follow by this argument is the case $(n,2d) = (n,4)$. $(4)$ follows from Theorem \ref{thm:konvex}.
\end{proof}

Hence, the convex cone $C_{n,2d}$ serves as a nonnegativity certificate, which, by Proposition \ref{prop:cones}, is independent from sums of squares certificates.

\begin{example}
 Let $f = 3 + 4y^4 + 6x^8 + x^4y^4 - 3xy + 5x^3y + 2x^4y^2$. The Newton polytope $\New(f) = \conv\{(0,0)^T, (0,4)^T, (4,4)^T, (8,0)^T)\}$ is not a simplex and $f \in C_{2,8}$. An explicit representation is given by
$$f \ = \ (1 + 2x^8 + 2y^4 - 3xy) + (1 + 3x^8 + 2y^4 + 5x^3y) + (1 + x^8 + x^4y^4 + 2x^4y^2).$$
\end{example}

We give two further remarks about the Proposition \ref{Prop:ConeContainment}:

\begin{enumerate}
 \item As stated in the proof $(n,4)$ is the case, which is not covered in Part (3). We believe that $\Sigma_{n,4} \not\subset C_{n,4}$ for all $n$ but we do not have an example.
 \item Let $\wh C_{n,2d}$ be the subset of $C_{n,2d}$ containing all polynomials with a full dimensional Newton polytope. It is not obvious for which cases next to $(n,2d) \in\{(1,2d),(n,2),(2,4)\}$ it holds that $\wh C_{n,2d} \subseteq \Sigma_{n,2d}$. However, $\wh C_{n,2d} \not\subseteq \Sigma_{n,2d}$ if we require $d \geq n+1$ as we show in the following example.
\end{enumerate}

\begin{example}
 Let $f = 1 + \sum_{j = 1}^n \mathbf{x}^{\alp(j)} - c \cdot \mathbf{x}^{(2,\ldots,2)}$ with $\alp(j) = (2,\ldots,2) + 2 \cdot e_j$ where $e_j$ denotes the $j$-th unit vector and $n+1 \leq c < 0$. By Theorem \ref{Thm:Positiv} we conclude that $f$ is a nonnegative circuit polynomial in $n$ variables of degree $2n+2$. Hence, $f \in C_{n,2d}$ for all $n$ and $d \geq n+1$. Moreover, $\New(f)$ is an $n$-dimensional polytope by construction.  But $f \notin \Sig_{n,2d}$. Namely, it is easy to see that the simplex $1/2 \cdot \New(f) = \conv\{0,1/2 \cdot \alp(1),\ldots,1/2 \cdot \alp(n)\}$ only contains the lattice point $(1,\ldots,1)$ in the interior. Therefore, $\New(f)$ has exactly one even lattice point in the interior, the point $(2,\ldots,2)$. It follows from a statement by Reznick \cite[Theorem 2.5]{Reznick:AGI} that $\New(f)$ is an $M$-simplex. Hence, $f \notin \Sig_{n,2d}$ by Theorem \ref{Thm:MainSOS}.
\end{example}

Of course, a priori it is completely unclear for which type of nonnegative polynomials a SONC decomposition exists and how big the gap between $C_{n,2d}$ and $P_{n,2d}$ is. Furthermore, it is not obvious how to compute such a decomposition, if it exists. We discuss this question in a follow up article \cite{Iliman:deWolff:GP}. In this article we show in particular that for simplex Newton polytopes (with arbitrary support) such a decomposition exists if and only if a particular geometric optimization problem is feasible, which can be checked very efficiently. This generalizes similar results by Ghasemi and Marshall \cite{Ghasemi:Marshall:GPGlobal,Ghasemi:Marshall:GPSemialgebraic}. Here we deduce as a fruitful first step the following corollary from Theorem \ref{thm:multiple}.

\begin{cor}
Let $f = b_0 + \sum_{j = 1}^n b_j \mathbf{x}^{\alp(j)} + \sum_{i = 1}^k a_i \mathbf{x}^{y(i)}$ be nonnegative with $b_j \in \R_{> 0}$ and $a_i \in \R^*$ such that $\New(f) = \Delta = \conv\{0,\alp(1),\ldots,\alp(n)\}$ is a simplex and all $y(i) \in (\Int(\Delta) \cap \N^n)$. If there exists a vector $\mathbf{v} \in (\R^*)^n$ such that $a_i \mathbf{v}^{y(i)} < 0$ for all $1 \leq i \leq k$, then $f$ is SONC. 
\label{Cor:SONC}
\end{cor}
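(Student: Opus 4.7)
The plan is to reduce this statement to Theorem \ref{thm:multiple} by a sign substitution argument. The key observation is that the hypothesis on $\mathbf{v}$ lets us arrange, via the change of variables $x_j \mapsto \sgn(v_j) x_j$, for all interior coefficients to become negative simultaneously. Since the vertices $\alp(j)$ lie in $(2\N)^n$, such a substitution fixes each vertex monomial, so the vertex coefficients $b_0, b_1, \dots, b_n$ are unaffected; meanwhile each interior monomial $\mathbf{x}^{y(i)}$ acquires the sign $\prod_j \sgn(v_j)^{y(i)_j} = \sgn(\mathbf{v}^{y(i)})$. Together with the hypothesis $a_i \mathbf{v}^{y(i)} < 0$, the new coefficient of $\mathbf{x}^{y(i)}$ becomes exactly $-|a_i|$.

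Concretely, set $\sig_j = \sgn(v_j) \in \{\pm 1\}$ and define $g(\mathbf{x}) = f(\sig_1 x_1, \dots, \sig_n x_n)$. By the discussion above,
\begin{eqnarray*}
g(\mathbf{x}) & = & b_0 + \sum_{j=1}^n b_j \mathbf{x}^{\alp(j)} - \sum_{i=1}^k |a_i| \mathbf{x}^{y(i)},
\end{eqnarray*}
and $g$ is nonnegative since $f$ is and the substitution is a bijection on $\R^n$. The polynomial $g$ now satisfies exactly the hypotheses of Theorem \ref{thm:multiple} (simplex Newton polytope with even vertices, all interior coefficients negative), so Theorem \ref{thm:multiple} yields a decomposition $g = \sum_{i=1}^k E_{y(i)}$ with each $E_{y(i)} \in P_{\Delta(i)}^{y(i)} \cap P_{n,2d}$ a nonnegative circuit polynomial. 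In particular, $g \in C_{n,2d}$.

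Finally, to transfer the SONC decomposition back to $f$, observe that $f(\mathbf{x}) = g(\sig_1 x_1, \dots, \sig_n x_n)$ (since $\sig_j^2 = 1$), so
\begin{eqnarray*}
f(\mathbf{x}) & = & \sum_{i=1}^k E_{y(i)}(\sig_1 x_1, \dots, \sig_n x_n).
\end{eqnarray*}
Each summand $E_{y(i)}(\sig \cdot \mathbf{x})$ is again supported on the circuit $\supp(E_{y(i)})$ (vertex monomials are unchanged because $\alp(j) \in (2\N)^n$, and the interior monomial $\mathbf{x}^{y(i)}$ is merely rescaled by a sign), its vertex coefficients remain positive, and it is nonnegative as a sign substitution of a nonnegative polynomial. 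Hence each summand still belongs to $P_{\Delta(i)}^{y(i)} \cap P_{n,2d}$ and $f \in C_{n,2d}$, as claimed.

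The only step requiring any care is the bookkeeping with signs in the sign substitution; the conceptual content is entirely concentrated in the reduction to Theorem \ref{thm:multiple}, and no new optimization or analytic argument is required.
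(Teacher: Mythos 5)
Your proof is correct and follows essentially the same route as the paper: a sign substitution $x_j \mapsto \sgn(v_j)\, x_j$ (which fixes the even vertex monomials and makes every interior coefficient negative) followed by an application of Theorem \ref{thm:multiple}. The paper's own proof is a two-line version of exactly this argument; your additional bookkeeping on transferring the decomposition back to $f$ is a harmless elaboration of the same idea.
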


\begin{proof}
Every monomial square is a strictly positive term as well as a $0$-simplex circuit polynomial. Thus, we can ignore these terms. If a particular vector $\mathbf{v} \in (\R^*)^n$ with the desired properties exists, then Theorem \ref{thm:multiple} immediately yields a SONC decomposition after a variable transformation $x_j \mapsto -x_j$ for all $j$ with $v_j < 0$.
\end{proof}

\section{Extension to Arbitrary Polytopes and Counterexamples}
\label{Sec:conjecture}
In Section \ref{Sec:PSDSOS} we proved for $f \in P_{\Delta}^y$ that $f\in \Sigma_{n,2d}^y$ if and only if $y \in \Delta^*$ or $f$ is a sum of monomial squares. One might wonder whether this equivalence also holds for arbitrary polytopes. More precisely, let $Q \subset \R^n$ be an arbitrary lattice polytope and denote by $AP_{Q}^y$ the set of all polynomials of the form $\sum_{\alp \in \V(Q)} b_\alp \mathbf{x}^{\alp} + c \mathbf{x}^y$ that are supported on the vertices $\V(Q)$ of $Q$ and an additional interior lattice point $y \in \Int(Q)$. As a generalization of our previous notation, we call $f \in AP_Q^y$ an \emph{agiform} if $\sum_{\alp \in \V(Q)} b_\alp \alp = y$ and $\sum_{\alp \in \V(Q)} b_\alp = 1$ as well as $b_\alp > 0$ and $c = -1$.

In \cite[Section 10]{Reznick:AGI}, it is asked, whether the lattice point criterion $y \in Q^*$ is again an equivalent condition for a polynomial in $AP_Q^y$ to be a sum of squares. And, if not, how sums of squares can be characterized in this case. Here, we provide a solution to this question (Theorem \ref{Thm:ReznickProblem}). Let $P_{Q}^y$ respectively $\Sigma_{Q}^y$ denote the set of nonnegative respectively sums of squares polynomials in $AP_Q^y$. As for a simplex $\Delta$, for an arbitrary lattice polytope $Q$, we use the same definition of an $M$-polytope respectively an $H$-polytope.\\

The implication $f\in \Sigma_{Q}^y \Rightarrow y \in Q^*$ does always hold. For agiforms, this is proven already in \cite{Reznick:AGI}. The proof in the case of arbitrary coefficients follows exactly the same line as the proof of Theorem \ref{thm:sos}. 

\begin{prop}
\label{prop:counterexample}
There exists $f\in P_{Q}^y\setminus \Sigma_{Q}^y$ and $y \in Q^*$.
\end{prop}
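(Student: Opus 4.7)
I will exhibit an explicit counterexample using a closedness-of-SOS argument. The key idea is that a non-simplex Newton polytope $Q$ can certify $y\in Q^{*}$ through midpoint relations among ``extra'' vertices that contribute arbitrarily small perturbations to the polynomial, allowing the polynomial to sit arbitrarily close to a classical non-SOS nonnegative polynomial (the Motzkin polynomial) whose Newton polytope is a strictly smaller $M$-simplex.

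Take
\begin{equation*}
  Q\ :=\ \conv\bigl\{(0,0),\,(4,0),\,(4,2),\,(2,4),\,(0,4)\bigr\}\ \subset\ \R^{2}\quad\text{and}\quad y\ :=\ (2,2).
\end{equation*}
A direct inspection shows that all five listed points are extremal (hence they constitute the vertex set of the pentagon $Q$), all are even, and $y\in\Int(Q)$. The identity
\begin{equation*}
  (2,2)\ =\ \tfrac{1}{2}\bigl((4,0)+(0,4)\bigr)
\end{equation*}
exhibits $y$ as the midpoint of two distinct even vertices of $Q$, so $y\in A(\hat Q)\subseteq Q^{*}$. For every $\varepsilon>0$ define
\begin{equation*}
  f_{\varepsilon}(x_{1},x_{2})\ :=\ 1+\varepsilon x_{1}^{4}+\varepsilon x_{2}^{4}+x_{1}^{4}x_{2}^{2}+x_{1}^{2}x_{2}^{4}-3x_{1}^{2}x_{2}^{2}.
\end{equation*}
Then $\supp(f_{\varepsilon})=\V(Q)\cup\{y\}$, so $f_{\varepsilon}\in AP_{Q}^{y}$, and $f_{\varepsilon}\in P_{Q}^{y}$ because it is the sum of the (nonnegative) Motzkin polynomial and the two monomial squares $\varepsilon x_{1}^{4}$ and $\varepsilon x_{2}^{4}$.

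It remains to show that $f_{\varepsilon}\notin\Sigma_{Q}^{y}$ for all sufficiently small $\varepsilon>0$. For this I will use closedness of the SOS cone $\Sigma_{n,2d}$ in the finite-dimensional vector space $\R[\mathbf{x}]_{2d}$. Assume toward contradiction that $f_{\varepsilon_{j}}\in\Sigma_{n,6}$ along some sequence $\varepsilon_{j}\to 0^{+}$. Since $f_{\varepsilon_{j}}\to f_{0}$ in $\R[\mathbf{x}]_{6}$, where $f_{0}=1+x_{1}^{4}x_{2}^{2}+x_{1}^{2}x_{2}^{4}-3x_{1}^{2}x_{2}^{2}$ is the Motzkin polynomial, closedness of $\Sigma_{n,6}$ would force $f_{0}\in\Sigma_{n,6}$, contradicting Hilbert's classical result that the Motzkin polynomial is not a sum of squares. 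Hence some $\varepsilon_{0}>0$ yields $f_{\varepsilon_{0}}\in P_{Q}^{y}\setminus\Sigma_{Q}^{y}$ with $y\in Q^{*}$, proving the proposition.

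The only delicate point is verifying that the Newton polytope of $f_{\varepsilon_{0}}$ is indeed $Q$ and that $y$ remains interior to it. Both are immediate from the explicit support set $\V(Q)\cup\{y\}$ together with the fact that the five listed vertices are in convex position (traversing the boundary counterclockwise $(0,0)\to(4,0)\to(4,2)\to(2,4)\to(0,4)\to(0,0)$, each point is a strict extremum along one of the edge directions), and that $(2,2)$ strictly satisfies each of the defining inequalities of $Q$. Closedness of $\Sigma_{n,2d}$ is standard.
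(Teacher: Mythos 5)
Your proof is correct, but it takes a genuinely different route from the paper's. The paper works with the quadrilateral $Q=\conv\{(0,0),(4,0),(4,2),(2,4)\}$, notes that $Q$ is an $H$-polytope (so $y=(2,2)\in Q^*$ automatically), and then exhibits a specific agiform (the convex combination with $\lambda_3=\tfrac{2}{5}$) whose failure to be a sum of squares is verified by an explicit Gram-matrix computation. You instead enlarge the support to a pentagon, obtain $y\in Q^*$ for free from the single midpoint relation $(2,2)=\tfrac12\bigl((4,0)+(0,4)\bigr)$ (the set $\hat Q\cup\{y\}$ is then $\hat Q$-mediated, hence contained in $Q^*$), and get non-SOS-ness abstractly by perturbing the Motzkin polynomial and invoking closedness of $\Sigma_{2,6}$. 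Your argument requires no computation at all -- only the standard facts that the Motzkin polynomial is not a sum of squares and that the SOS cone is closed -- at the price of being non-effective: it produces no explicit $\varepsilon_0$, whereas the paper's example is fully concrete (and the paper goes on to analyze exactly which $\lambda_3$ give sums of squares, which your construction does not address). Two small points: the Motzkin polynomial's non-SOS-ness is due to Motzkin (Hilbert's 1888 result was non-constructive), and it is worth observing that your example is consistent with the paper's subsequent Theorem \ref{Thm:ReznickProblem}, since your pentagon admits the triangulation containing the Motzkin $M$-simplex $\conv\{(0,0),(4,2),(2,4)\}$ with $y$ as its barycenter. All the verifications you flag as delicate (convex position of the five vertices, interiority of $y$, the support being exactly $\V(Q)\cup\{y\}$) do check out.
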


\begin{proof}
 We provide an explicit example. Let 
$$Q \ = \ \conv\{v_0,v_1,v_2,v_3\} \ = \ \conv\{(0,0), (4,0), (4,2), (2,4)\} \ \text{ with } \ y \ = \ (2,2).$$

It is easy to check that $Q$ is an $H$-polytope (indeed, it can actually be proven that Theorem \ref{Thm:toric} is true for arbitrary polytopes not just for simplices).
Since $Q$ is not a simplex, there are infinitely many convex combinations of $y$:
$$y \ = \ \lambda_0v_1 + \lambda_1v_1 + \lambda_2v_2 + \lambda_3v_3 \ \text{ such that } \ \sum_{i=0}^3 \lambda_i = 1 \ \text{ and } \ \lambda_i \geq 0.$$
The set of convex combinations of $y$ is given by $$\left\{(\lambda_0,\lambda_1,\lambda_2,\lambda_3) = \left(\frac{1}{2} - \frac{1}{2}\lambda_3, -\frac{1}{2} + \frac{3}{2}\lambda_3,  1 - 2\lambda_3, \lambda_3\right) \ : \ \frac{1}{3}\leq \lambda_3 \leq \frac{1}{2}\right\}.$$
The corresponding agiform $f(Q,\lambda,y)$ is then given by
$$f(Q,\lambda,y) \ = \ \left(\frac{1}{2} - \frac{1}{2}\lambda_3\right) + \left(-\frac{1}{2} + \frac{3}{2}\lambda_3\right)x^4 + ( 1 - 2\lambda_3)x^4y^2 + \lambda_3x^2y^4 - x^2y^2.$$
For $\lambda_3 = \frac{2}{5}$, the nonnegative polynomial 
$$f \ = \ \frac{3}{10} +  \frac{1}{10}x^4 +  \frac{1}{5}x^4y^2 +  \frac{2}{5}x^2y^4 - x^2y^2$$ 
can easily be checked to be not a sum of squares although $y \in Q^*$ via the corresponding Gram matrix.
\end{proof}

Actually, one can prove that the polynomial $f(Q,\lambda,y)$ in the above proof is a sum of squares if and only if $\lambda_3 = \frac{1}{2}$. In \cite{Reznick:AGI}, the author suspects that the condition $y \in Q^*$ is not sufficient by looking at similar examples. However, in all of these examples, the constructed polynomials that are nonnegative but not a sum of squares are not supported on the vertices of $Q$ and an additional interior lattice point $y \in \Int(Q)$. We conclude that in the non-simplex case the problem of deciding the sums of squares property depends on the coefficients of the polynomials, a sharp contrast to the simplex case. However, motivated by a question in \cite{Reznick:AGI} for agiforms, we are interested in the following sets: Let $C(y)$ denote the set of convex combinations of the interior lattice point $y \in \Int(Q)$, i.e.,
$$C(y) \ = \ \left\{\lambda = (\lambda_0,\dots,\lambda_s) \ : \ y = \sum_{i=0}^s \lambda_iv_i,\,\,\sum_{i=0}^s \lambda_i = 1,\,\, \lambda_i \geq 0\right\}$$
where $v_i$ are the $s$ vertices of $Q$. Note that $C(y)$ is a polytope. Fixing $f$ and $y$, we define 
$$\SOS(f,y) \ = \ \{\lambda \in C(y) \ : \ f(Q, \lambda, y) \ \text{ is a sum of squares}\}$$
where $Q = \New(f)$. We have already seen in the proof of Proposition \ref{prop:counterexample} that the structure of $\SOS(f,y)$ is unclear and highly depends on the convex combinations of $y$. It is formulated as an open question in \cite{Reznick:AGI}, whether one can say something about $\SOS(f,y)$ for fixed $f$ and $y$. 
For this, let 
$$Q \ = \ Q_1^{(i)}\cup\dots\cup Q_{r(i)}^{(i)}$$ 
be a triangulation of $Q$ for $1 \leq i \leq t$, where $t$ is the number of triangulations of $Q$ without using new vertices. We are interested in those simplices $Q^{(i)}_j$ that contain the point $y \in \Int(Q)$ and their maximal mediated sets $(Q^{(i)}_j)^*$. Recall that for every lattice simplex $\Delta$ with vertex set $\hat \Delta$ we denote $\Delta^*$ as the maximal $\hat \Delta$-mediated set (see Section \ref{SubSec:Agiforms}).

\begin{thm}
 Let $Q \subset \R^n$ be a lattice $n$-polytope, $y \in \Int(Q) \cap \N^n$ and $f \in AP^y_Q$ be an agiform. Then $\SOS(f,y) = C(y)$, i.e., \emph{every} agiform is a sum of squares, if and only if $y \in Q^{(i)}_j$ implies $y \in (Q_{j}^{(i)})^*$ for every $1\leq i\leq t$ and $1 \leq j \leq r(i)$. 
\label{Thm:ReznickProblem}
\end{thm}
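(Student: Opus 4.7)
The plan is to prove the two implications separately: the ``only if'' direction by a perturbation/closedness argument, and the ``if'' direction by a Carath\'eodory-type decomposition over the extreme points of the combination polytope $C(y)$.

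For the \emph{only if} direction I argue contrapositively. Suppose some triangulation of $Q$ contains a simplex $Q^{(i)}_j$ with $y \in Q^{(i)}_j$ but $y \notin (Q^{(i)}_j)^*$. Let $\lambda^0 \in C(y)$ be the unique convex combination of $y$ supported on the vertices of $Q^{(i)}_j$, padded by zeros on the remaining vertices of $Q$. Then the agiform $f(Q^{(i)}_j, \lambda^0, y) = \sum_v \lambda^0_v \mathbf{x}^v - \mathbf{x}^y$ is supported on the simplex $Q^{(i)}_j$ with $y \notin (Q^{(i)}_j)^*$, hence fails to be a sum of squares by Theorem \ref{thm:rezmain}. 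Since $y \in \Int(Q)$, there exists $\lambda' \in C(y)$ whose every coordinate is strictly positive. The family $\lambda^{\eps} = (1-\eps)\lambda^0 + \eps \lambda'$ lies in $C(y)$ with all coordinates strictly positive for $\eps > 0$, so each $f(Q, \lambda^{\eps}, y) \in AP_Q^y$ is a genuine agiform whose coefficient vector converges to that of $f(Q^{(i)}_j, \lambda^0, y)$ as $\eps \to 0$. Because $\Sigma_{n,2d}$ is closed, some $\lambda^{\eps}$ must fail to yield an SOS polynomial, so $\SOS(f,y) \neq C(y)$.

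For the \emph{if} direction, fix an arbitrary $\lambda \in C(y)$. Since $C(y)$ is a polytope, Carath\'eodory's theorem gives $\lambda = \sum_k \mu_k \lambda^{(k)}$ with $\mu_k \geq 0$, $\sum_k \mu_k = 1$, and each $\lambda^{(k)}$ an extreme point of $C(y)$. Each such extreme is supported on an affinely independent set $S_k \subset \V(Q)$, and $\Delta_k := \conv(S_k)$ is a simplex (possibly of dimension less than $n$) containing $y$ in its relative interior. Linearity yields
\begin{equation*}
f(Q, \lambda, y) \;=\; \sum_k \mu_k\, f(\Delta_k, \lambda^{(k)}, y),
\end{equation*}
and since $\Sigma_{n,2d}$ is closed under nonnegative combinations, it suffices to show each sub-agiform is a sum of squares. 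For full-dimensional $\Delta_k$, I invoke the claim that $\Delta_k$ belongs to some triangulation of $Q$ with vertices in $\V(Q)$; the hypothesis then forces $y \in \Delta_k^*$ and Theorem \ref{thm:rezmain} concludes. For lower-dimensional $\Delta_k$, I extend $\Delta_k$ to a full-dimensional simplex $\Delta_k'$ of which $\Delta_k$ is a face and which lies in some triangulation. Since $y$ lies in the relatively open face $\Delta_k \subset \partial \Delta_k'$, the defining property of a face forces any representation $y = (s+t)/2$ with distinct even $s, t \in \Delta_k'$ to have $s, t \in \Delta_k$; iterating, $(\Delta_k')^* \cap \Delta_k$ is itself $\hat{\Delta}_k$-mediated, so $y \in (\Delta_k')^*$ descends to $y \in \Delta_k^*$ and Reznick's theorem applies again.

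The main obstacle is the extension claim just invoked: that every full-dimensional sub-simplex of $Q$ with vertices in $\V(Q)$ appears in some triangulation of $Q$ without Steiner points. In the plane this follows inductively by triangulating the convex sub-regions cut off by the sides of $\Delta_k$, but in higher dimensions the convexity of $Q$ must be leveraged carefully to avoid Sch\"onhardt-like obstructions. If the claim fails in some case, the Carath\'eodory decomposition would need to be replaced by one adapted to a \emph{fixed} triangulation containing $y$ in its star, expressing $\lambda$ as a combination drawn from that triangulation together with monomial-square contributions absorbing excesses on vertices outside the star, at the cost of considerably more bookkeeping.
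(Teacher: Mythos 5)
Your overall architecture coincides with the paper's: the ``only if'' direction via perturbation of a degenerate (simplicial) agiform plus closedness of $\Sigma_{n,2d}$ is essentially identical to the published argument, and your ``if'' direction via a Carath\'eodory decomposition of $\lambda$ over the vertices of the polytope $C(y)$ is the same mechanism the paper obtains by citing Reznick's Theorem 7.1 (whose proof is exactly this decomposition). Your explicit treatment of the lower-dimensional cells $\Delta_k$ --- restricting the mediated set $(\Delta_k')^*$ to the face $\Delta_k$ and checking it is still $\hat\Delta_k$-mediated --- is correct and is a point the paper passes over silently.

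The one step you leave open --- that every full-dimensional simplex $\sigma$ spanned by an affinely independent subset of $\V(Q)$ occurs as a cell of some triangulation of $Q$ without new vertices --- is true, and you do not need the ``considerably more bookkeeping'' fallback. Take the placing (pushing) triangulation: start with the single cell $\sigma$ and insert the remaining elements of $\V(Q)$ one at a time; at each step the point being inserted is a vertex of $Q$, hence lies outside the convex hull of the points already placed, so the placing rule applies and only cones over visible boundary faces are added, never subdividing existing cells. The result is a triangulation of $Q$ on the vertex set $\V(Q)$ containing $\sigma$ as a cell, so no Sch\"onhardt-type obstruction can occur here (those arise for non-convex regions or for extending larger partial complexes, not a single simplex inside a convex polytope). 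The paper itself asserts this extendability without proof (``these agiforms come from triangulating the polytope $Q$ into simplices without using new vertices''), so once you insert the placing-triangulation observation your proof is complete and, if anything, slightly more careful than the published one.
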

 
\begin{proof}
 Assume $y \in Q^{(i)}_j \Ra y \in (Q_{j}^{(i)})^*$ for every $1\leq i\leq t$ and $1 \leq j \leq r(i)$. Let $\lambda\in C(y)$ with $f(Q,\lambda,y)$ being the corresponding agiform. By \cite[Theorem 7.1]{Reznick:AGI}, every agiform can be written as a convex combination of simplicial agiforms. In fact, following the proof in \cite[Theorem 7.1]{Reznick:AGI}, it can be verified that the vertices of the corresponding simplicial agiforms form a subset of the vertices of $Q$, since the set $C(y)$ of convex combinations of $y$ is a polytope with vertices being a subset of $\V(Q)$. Hence, these agiforms come from triangulating the polytope $Q$ into simplices without using new vertices. Since $y \in Q^{(i)}_j \Ra y \in ({Q_j^{(i)}})^*$ for every $i,j$, by Theorem \ref{thm:reznick}, the corresponding simplicial agiforms are always sums of squares and since $f(Q,\lambda,y)$ is a sum of them, the claim follows.

For the reverse direction, assume $y \in Q_{j}^{(i)}$ and $y \notin (Q_{j}^{(i)})^*$ for some $i,j$. We prove that this implies $\SOS(f,y) \neq C(y)$. Suppose $\V(Q) = \{v_1,\dots,v_m\}$. Then $C(y)$ is a polytope of dimension $d = m - (n+1)$. Let
$$f(Q,\lambda,y) \ = \ \sum_{i=1}^m \lambda_i(\mu_1,\dots,\mu_d)x^{v_i} - x^y$$
be the corresponding agiforms. Note that the coefficients $\lambda_i$ depend on $d$ parameters $\mu_1,\dots,\mu_d$, since $\dim C(y) = d$. By assumption, there exist $a_1,\dots,a_d \in \R_{> 0}$ such that the corresponding agiform $f(Q,\lambda,y)_{|(\mu_1,\dots,\mu_d) = (a_1,\dots,a_d)} = g$ is a simplicial agiform with respect to the simplex $Q_j^{(i)}$. Since $y \in Q_j^{(i)}$ but $y \notin (Q_{y,k}^{(i)})^*$, the agiform $g$ is not a sum of squares. By continuity, we can construct a sequence $(\mu_1,\dots,\mu_d)$ converging against $(a_1,\dots,a_d)$ with the properties that $f(Q,\lambda,y)_{|(\mu_1,\dots,\mu_d) = (a_1+\varepsilon,\dots,a_d+\varepsilon)}$ is an agiform for some $\varepsilon > 0$ with its support equal to $\{v_1,\dots,v_m,y\}$ and not being a sum of squares, since, otherwise, if every sequence member is a sum of squares, this will also hold for the limit agiform $g$ corresponding to $(a_1,\dots,a_d)$ since the cone of sums of squares is closed. Hence, $\SOS(f,y) \neq C(y)$.
\end{proof}

\begin{example}
 Let again 
$$Q \ = \ \conv\{v_0,v_1,v_2,v_3\} \ = \ \conv\{(0,0), (4,0), (4,2), (2,4)\}$$ 
as in the proof of Proposition \ref{prop:counterexample}. There are six interior lattice points in $Q$ given by
$$\Int(Q) \cap \N^n \ = \ \{(1,1), (2,1), (3,1), (2,2), (2,3), (3,2)\}.$$
Since $Q$ has four vertices, $C(y)$ for $y\in (\Int(Q) \cap \N^n)$ has a free parameter $\lambda_3$ (see proof of Proposition \ref{prop:counterexample}). In the following table, for all $y\in (\Int(Q) \cap \N^n)$, we provide the range of the free parameter $\lambda_3$ yielding valid convex combinations for $y$ as well as the set $\SOS(f,y)$.

\renewcommand{\arraystretch}{1.5}
$$
\begin{tabular}{c|c|c}\hline 
 $y$ & $\lambda_3$ & $\SOS(f,y)$ \\ \hline 
 $(1,1)$ & $\frac{1}{6}\leq \lambda_3\leq \frac{1}{4}$ & $\lambda_3\in [0.191;\frac{1}{4}]$ \\ \hline 
 $(2,1)$ & $0\leq \lambda_3\leq \frac{1}{4}$ & $\lambda_3\in [0;\frac{1}{4}]$ \\ \hline 
 $(3,1)$ & $0\leq \lambda_3\leq \frac{1}{4}$ & $\lambda_3\in [0;\frac{1}{4}]$ \\ \hline 
 $(2,2)$ & $\frac{1}{3}\leq \lambda_3\leq \frac{1}{2}$ & $\lambda_3\in \{\frac{1}{2}\}$ \\ \hline 
 $(2,3)$ & $\frac{2}{3}\leq \lambda_3\leq \frac{3}{4}$ & $\lambda_3\in [0.683;\frac{3}{4}]$ \\ \hline 
 $(3,2)$ & $\frac{1}{6}\leq \lambda_3\leq \frac{1}{2}$ & $\lambda_3\in [\frac{1}{4};\frac{1}{2}]$ \\ \hline 
\end{tabular}
$$

The sets $\SOS(f,y)$ are computed with \emph{SOSTOOLS}, see \cite{Parrilo:SOSTOOLS}. Note that $Q$ has two different triangulations in this case (see Figure \ref{Fig:Triangulations}). The lattice points $(2,1)$ and $(3,1)$ are the only lattice points that satisfy $y \in Q_j^{(i)} \Ra y \in (Q_j^{(i)})^*$ for all $i \in \{1,2\}$ and $j \in \{1,\ldots,r(i)\}$. Hence, exactly for $y \in \{(2,1), (3,1)\}$, every agiform is a sum of squares.
\end{example}

\begin{center}
\begin{figure}
\ifpictures
$$
\includegraphics[width=0.25\linewidth]{./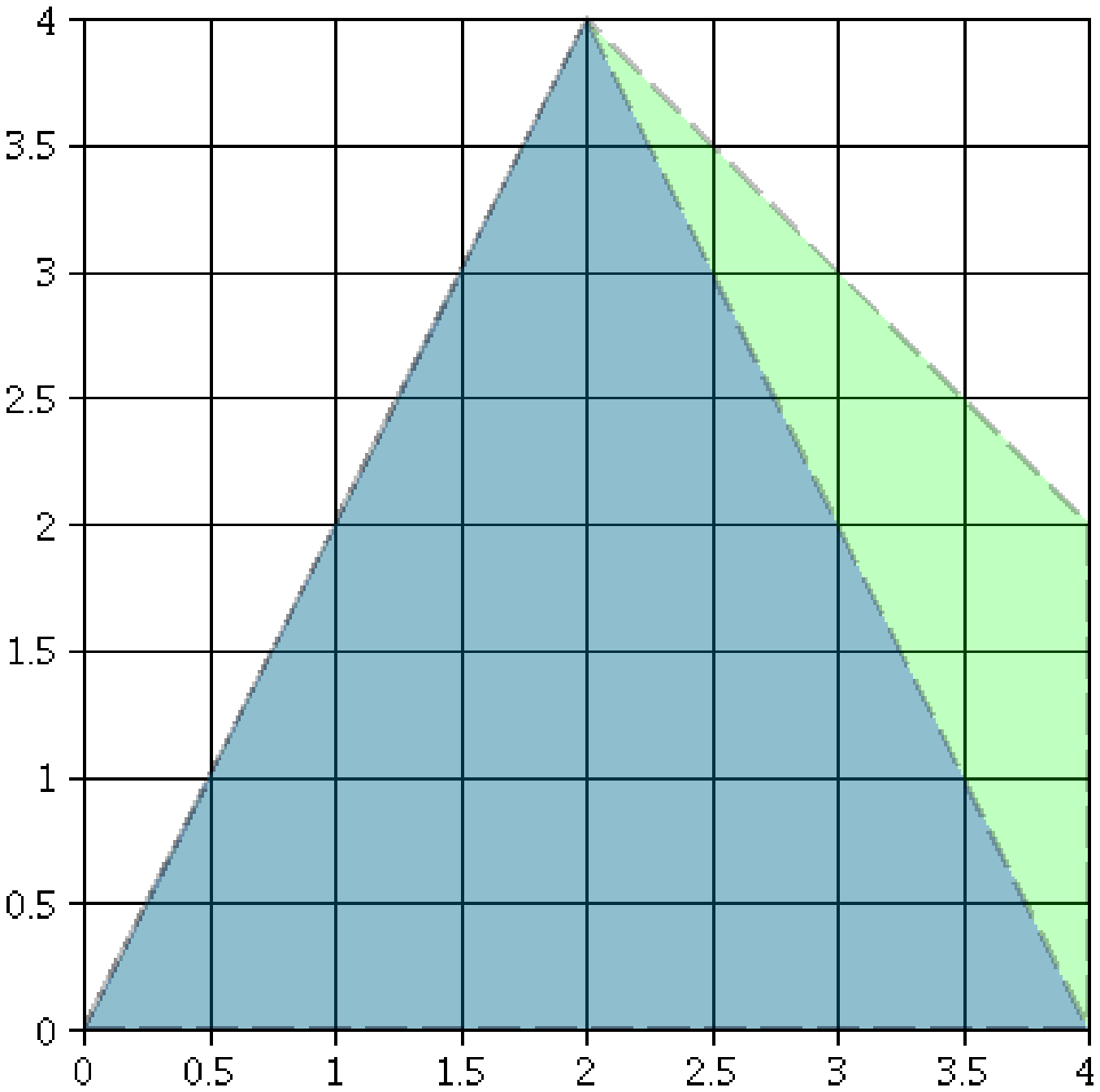} \qquad \qquad
\includegraphics[width=0.25\linewidth]{./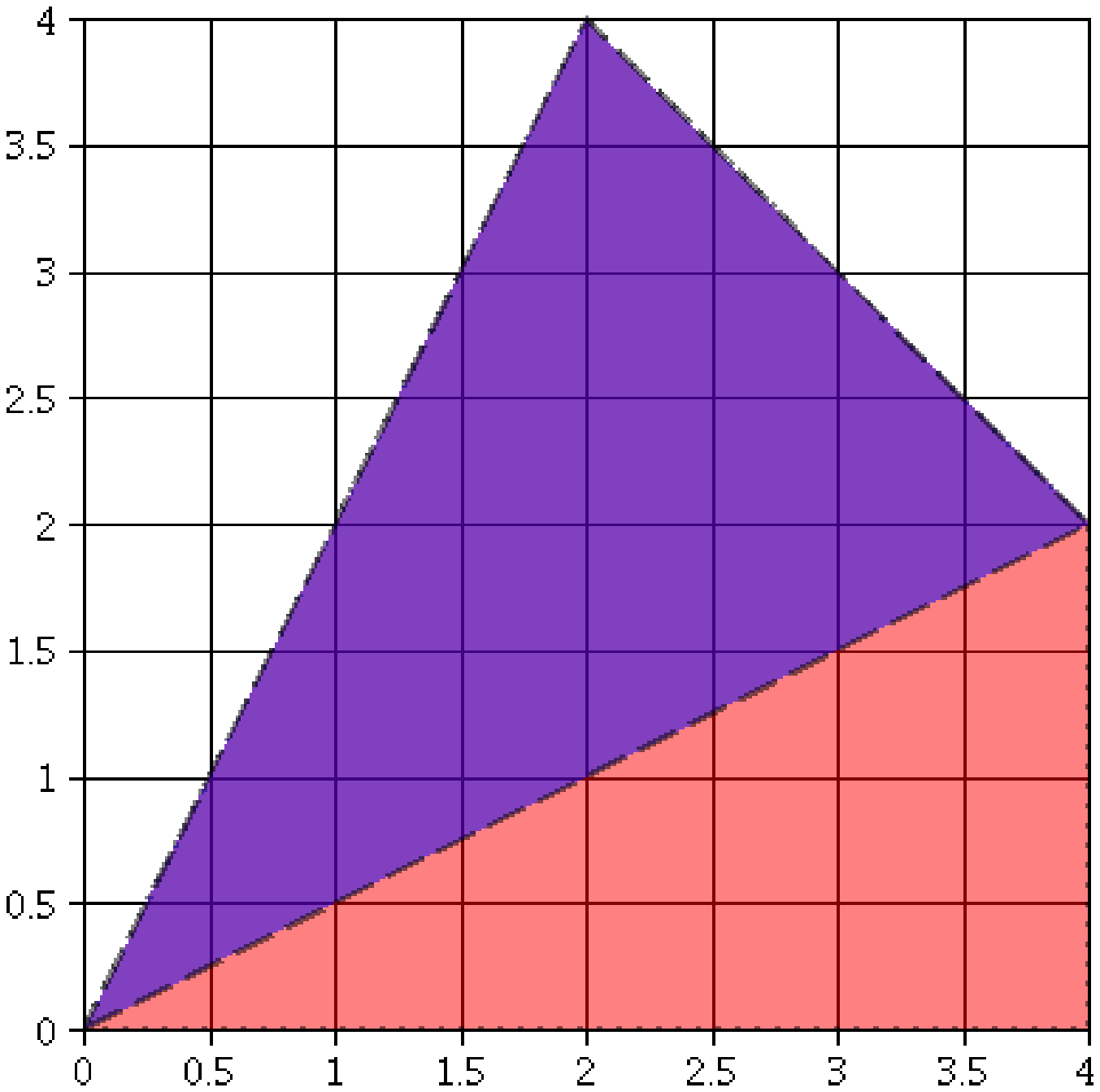}
$$
\fi
\caption{The two triangulations of $Q$.}
\label{Fig:Triangulations}
\end{figure}
\end{center}

\section{Outlook}
\label{Sec:outlook}
We want to give an outlook for possible future research. Starting with the section $\Sigma_{n,2d}^y$, we renew some open questions already stated in \cite{Reznick:AGI}. Is there an algorithm to compute $\Delta^*$ that is more efficient as the one in \cite{Reznick:AGI}? What can be said about the asymptotic behavior of $\Delta^*$, in particular, what is the, say, ``probability'' that a simplex is an $H$-simplex? This is settled for $\mathbb R^2$ in Corollary \ref{cor:R2}, but seems to be completely open for $n > 2$. Considering this problem from the viewpoint of toric geometry (see Theorem \ref{Thm:toric}), it would be a breakthrough to characterize simplices that are normal and their corresponding toric ideals being generated by quadrics. In Section \ref{Sec:SOPC}, we introduced the convex cone $C_{n,2d}$ of sums of nonnegative circuit polynomials, which serve as nonnegativity certificates different than sums of squares. From a practical viewpoint, the major problem is to determine the complexity of checking membership in $C_{n,2d}$. In particular, when is every nonnegative polynomial a sum of nonnegative circuit polynomials? As already mentioned in Section 7, the case of polynomials with simplex Newton polytopes is solved in \cite{Iliman:deWolff:GP} via geometric programming generalizing earlier work by Ghasemi in Marshall \cite{Ghasemi:Marshall:GPGlobal,Ghasemi:Marshall:GPSemialgebraic}.

From the viewpoint of amoeba theory one evident conjecture is that Theorem \ref{Thm:AmoebaSolidness} can be generalized to arbitrary \textit{complex} polynomials supported on a circuit. Taking into account the corresponding literature, in particular \cite{Passare:Rullgard:Spine,TTdW:genusone}, an answer to this conjecture can be considered as the final piece missing in order to completely characterize amoebas supported on a circuit. 

In our opinion, the most interesting question is whether similar approaches can be generalized to more general (sparse) polynomials and, in accordance, how much deeper the observed connection between the a priori very distinct mathematical topics ``amoebas'' and ``nonnegativity of real polynomials'' is? We believe that exploiting methods from amoeba theory might eventually yield fundamental progress in understanding nonnegativity of real polynomials.

\bibliographystyle{amsplain}
\bibliography{Amoeba_Nonnegative_ArXiv}

\providecommand{\bysame}{\leavevmode\hbox to3em{\hrulefill}\thinspace}
\providecommand{\MR}{\relax\ifhmode\unskip\space\fi MR }
\providecommand{\MRhref}[2]{%
  \href{http://www.ams.org/mathscinet-getitem?mr=#1}{#2}
}
\providecommand{\href}[2]{#2}
\begin{thebibliography}{10}

\bibitem{Ahmadi:et:al:convex}
A.A. Ahmadi, A.~Olshevsky, P.A. Parrilo, and J.N. Tsitsiklis,
  \emph{N{P}-hardness of deciding convexity of quartic polynomials and related
  problems}, Math. Program. \textbf{137} (2013), no.~1-2, Ser. A, 453--476.

\bibitem{Bjoerner:et:al:Matroids}
A.~Bj{\"o}rner, M.~Las Vergnas, B.~Sturmfels, N.~White, and G.M. Ziegler,
  \emph{Oriented matroids}, second ed., Encyclopedia of Mathematics and its
  Applications, vol.~46, Cambridge University Press, Cambridge, 1999.

\bibitem{Blekherman:Parrilo:Thomas}
G.~Blekherman, P.A. Parrilo, and R.R. Thomas, \emph{Semidefinite optimization
  and convex algebraic geometry}, MOS-SIAM Series on Optimization, vol.~13,
  SIAM and the Mathematical Optimization Society, Philadelphia, 2013.

\bibitem{Brieskorn:Knoerrer}
E.~Brieskorn and H.~Kn{\"o}rrer, \emph{Plane algebraic curves}, Modern
  Birkh\"auser Classics, Birkh\"auser/Springer Basel AG, Basel, 1986.

\bibitem{Bruns:et:al}
W.~Bruns, J.~Gubeladze, and N.V. Trung, \emph{Normal polytopes, triangulations,
  and {K}oszul algebras}, J. Reine Angew. Math. \textbf{485} (1997), 123--160.

\bibitem{Reznick:realzeros}
M.D. Choi, Y.T. Lam, and B.~Reznick, \emph{Real zeros of positive semidefinite
  forms. {I}}, Math. Z. \textbf{171} (1980), no.~1, 1--26.

\bibitem{deWolff:Diss}
T.~de~Wolff, \emph{On the {G}eometry, {T}opology and {A}pproximation of
  {A}moebas}, Ph.D. thesis, Goethe {U}niversity, Frankfurt am Main, 2013.

\bibitem{Einsiedler:et:al}
M.~Einsiedler, D.~Lind, R.~Miles, and T.~Ward, \emph{Expansive subdynamics for
  algebraic {$\mathbb{Z}^d$}-actions}, Ergodic {T}heory {D}ynam. {S}ystems
  \textbf{21} (2001), no.~6, 1695--1729.

\bibitem{Fidalgo:Kovacec}
C.~Fidalgo and A.~Kovacec, \emph{Positive semidefinite diagonal minus tail
  forms are sums of squares}, Math. Z. \textbf{269} (2011), no.~3-4, 629--645.

\bibitem{Forsberg:Passare:Tsikh}
M.~Forsberg, M.~Passare, and A.~Tsikh, \emph{Laurent determinants and
  arrangements of hyperplane amoebas}, Adv. {M}ath. \textbf{151} (2000),
  45--70.

\bibitem{GKZ:discriminant}
I.M. Gelfand, M.M. Kapranov, and A.V. Zelevinsky, \emph{Discriminants,
  resultants and multidimensional determinants}, Modern Birkh\"auser Classics,
  Birkh\"auser Boston Inc., Boston, MA, 2008.

\bibitem{Ghasemi:Marshall:GPGlobal}
M.~Ghasemi and M.~Marshall, \emph{Lower bounds for polynomials using geometric
  programming}, SIAM J. Optim. \textbf{22} (2012), no.~2, 460--473.

\bibitem{Ghasemi:Marshall:GPSemialgebraic}
\bysame, \emph{Lower bounds for a polynomial on a basic closed semialgebraic
  set using geometric programming},  (2013), Preprint, {\sf arxiv:1311.3726}.

\bibitem{Gruenbaum}
B.~Gr{\"u}nbaum, \emph{Convex polytopes}, second ed., Graduate Texts in
  Mathematics, vol. 221, Springer-Verlag, New York, 2003.

\bibitem{Gubeladze:normality}
J.~Gubeladze, \emph{Convex normality of rational polytopes with long edges},
  Adv. Math. \textbf{230} (2012), no.~1, 372--389.

\bibitem{Hilbert:Seminal}
D.~Hilbert, \emph{Ueber die {D}arstellung definiter {F}ormen als {S}umme von
  {F}ormenquadraten}, Math. Ann. \textbf{32} (1888), no.~3, 342--350.

\bibitem{Iliman:deWolff:GP}
S.~Iliman and T.~de~Wolff, \emph{Lower bounds for polynomials with simplex
  newton polytopes based on geometric programming}, 2014, Preprint, {\sf
  arXiv:1402.6185}.

\bibitem{Kenyon:Okounkov:Sheffield}
R.~Kenyon, A.~Okounkov, and S.~Sheffield, \emph{Dimers and amoebae}, Ann. of
  Math. (2) \textbf{163} (2006), no.~3, 1019--1056.

\bibitem{Koelman:toric}
R.J. Koelman, \emph{A criterion for the ideal of a projectively embedded toric
  surface to be generated by quadrics}, Beitr\"age Algebra Geom. \textbf{34}
  (1993), no.~1, 57--62.

\bibitem{Lasserre:sparse}
J.B. Lasserre, \emph{Convergent {SDP}-relaxations in polynomial optimization
  with sparsity}, SIAM J. Optim. \textbf{17} (2006), no.~3, 822--843.

\bibitem{Lasserre:Buch}
\bysame, \emph{Moments, positive polynomials and their applications}, Imperial
  College Press Optimization Series, vol.~1, Imperial College Press, London,
  2010.

\bibitem{Laurent:Survey}
M.~Laurent, \emph{Sums of squares, moment matrices and optimization over
  polynomials}, Emerging applications of algebraic geometry, IMA Vol. Math.
  Appl., vol. 149, Springer, New York, 2009, pp.~157--270.

\bibitem{MacLagan:Sturmfels}
D.~Maclagan and B.~Sturmfels, \emph{Introduction to {T}ropical {G}eometry},
  Amer.\ Math.\ Soc., Providence, R.I., 2015.

\bibitem{Mikhalkin:Annals}
G.~Mikhalkin, \emph{Real algebraic curves, the moment map and amoebas}, Ann.\
  Math. \textbf{151} (2000), 309--326.

\bibitem{Mikhalkin:Survey}
\bysame, \emph{Amoebas of algebraic varieties and tropical geometry}, Different
  faces of geometry (S.~K. Donaldson, Y.~Eliashberg, and M.~Gromov, eds.),
  Kluwer, 2004, pp.~257--300.

\bibitem{Nie:Discriminants}
J.~Nie, \emph{Discriminants and nonnegative polynomials}, J. Symbolic Comput.
  \textbf{47} (2012), no.~2, 167--191.

\bibitem{Passare:Rullgard:Spine}
M.~Passare and H.~Rullg{\aa}rd, \emph{Amoebas, {M}onge-{A}mp\'{e}re measures
  and triangulations of the {N}ewton polytope}, Duke Math.\ J. \textbf{121}
  (2004), no.~3, 481--507.

\bibitem{Passare:Tsikh:Survey}
M.~Passare and A.~Tsikh, \emph{Amoebas: their spines and their contours},
  Idempotent mathematics and mathematical physics, Contemp. Math., vol. 377,
  Amer. Math. Soc., pp.~275--288.

\bibitem{Parrilo:SOSTOOLS}
S.~Prajna, A.~Papachristodoulou, P.~Seiler, and P.A. Parrilo, \emph{S{OSTOOLS}
  and its control applications}, Positive polynomials in control, Lecture Notes
  in Control and Inform. Sci., vol. 312, Springer, Berlin, 2005, pp.~273--292.

\bibitem{Purbhoo}
K.~Purbhoo, \emph{A {N}ullstellensatz for amoebas}, Duke {M}ath. {J}.
  \textbf{14} (2008), no.~3, 407--445.

\bibitem{Reznick:AGI}
B.~Reznick, \emph{Forms derived from the arithmetic-geometric inequality},
  Math. Ann. \textbf{283} (1989), no.~3, 431--464.

\bibitem{Reznick:Blenders}
\bysame, \emph{{Blenders.}}, {Notions of positivity and the geometry of
  polynomials. Dedicated to the memory of Julius Borcea}, Basel: Birkh\"auser,
  2011, pp.~345--373.

\bibitem{Rullgard:Diss}
H.~Rullg{\aa}rd, \emph{Topics in geometry, analysis and inverse problems},
  Ph.D. thesis, Stockholm {U}niversity, 2003.

\bibitem{Schroeter:deWolff:Boundary}
F.~Schroeter and T.~de~Wolff, \emph{The boundary of amoebas}, 2013, Preprint,
  {\sf arXiv:1310.7363}.

\bibitem{Sottile:Book:RealSolutions}
F.~Sottile, \emph{Real solutions to equations from geometry}, University
  Lecture Series, vol.~57, American Mathematical Society, Providence, RI, 2011.

\bibitem{Sturmfels:toric}
B.~Sturmfels, \emph{Equations defining toric varieties}, Algebraic geometry --
  {S}anta {C}ruz 1995, Proc. Sympos. Pure Math., vol.~62, Amer. Math. Soc.,
  Providence, RI, 1997, pp.~437--449.

\bibitem{TTdW:genusone}
T.~Theobald and T.~de~Wolff, \emph{Amoebas of genus at most one}, Adv. Math.
  \textbf{239} (2013), 190--213.

\bibitem{Theobald:deWolff:Trinomials}
T.~Theobald and T.~de~Wolff, \emph{Norms of roots of trinomials}, 2014, To
  appear in Math. Ann., see also {\sf arXiv:1411.6552}.

\end{thebibliography}

\end{document}